\newtheorem{theorem}{Theorem}[section]
\newtheorem{proposition}{Proposition}[section]
\newtheorem{corollary}{Corollary}[section]
\newtheorem{lemma}{Lemma}[section]
\theoremstyle{definition}
\newtheorem{definition}{Definition}[section]
\newtheorem{remark}{Remark}[section]
\newtheorem{example}{Example}[section]
\numberwithin{equation}{section}
\begin{document}

%\linenumbers

\title[Artin transfer patterns on descendant trees]{Artin transfer patterns \\ on descendant trees of finite \(p\)-groups}

\author{Daniel C. Mayer}
\address{Naglergasse 53\\8010 Graz\\Austria}
\email{algebraic.number.theory@algebra.at}
\urladdr{http://www.algebra.at}
%\author{M. F. Newman}
%\address{Australian National University\\Canberra\\ACT}
%\email{newman@gmail.com}

\thanks{Research supported by the Austrian Science Fund (FWF): P 26008-N25}
%\thanks{Research of the first author supported by the Austrian Science Fund (FWF): P 26008-N25}

\subjclass[2010]{Primary 20D15, 20F12, 20F14, secondary 11R37}
\keywords{Artin transfer, kernel type, target type, descendant tree, coclass tree, coclass graph}

\date{November 24, 2015}

\dedicatory{Respectfully dedicated to Professor M. F. Newman}

\begin{abstract}
Based on a thorough theory of the Artin transfer homomorphism \(T_{G,H}:\,G\to H/H^\prime\)
from a group \(G\) to the abelianization \(H/H^\prime\) of a subgroup \(H\le G\) of finite index \(n=(G:H)\),
and its connection with the permutation representation \(G\to S_n\)
and the monomial representation \(G\to H\wr S_n\) of \(G\),
the Artin pattern \(G\mapsto(\tau(G),\varkappa(G))\), which consists of families
\(\tau(G)=(H/H^\prime)_{H\le G}\), resp. \(\varkappa(G)=(\ker(T_{G,H}))_{H\le G}\),
of transfer targets, resp. transfer kernels,
is defined for the vertices \(G\in\mathcal{T}\)
of any descendant tree \(\mathcal{T}\) of finite \(p\)-groups.
It is endowed with partial order relations
\(\tau(\pi(G))\le\tau(G)\) and \(\varkappa(\pi(G))\ge\varkappa(G)\),
which are compatible with the parent-descendant relation \(\pi(G)<G\)
of the edges \(G\to\pi(G)\) of the tree \(\mathcal{T}\).
The partial order enables termination criteria for the \(p\)-group generation algorithm
which can be used for searching and identifying a finite \(p\)-group \(G\),
whose Artin pattern \((\tau(G),\varkappa(G))\) is known completely or at least partially,
by constructing the descendant tree with the abelianization \(G/G^\prime\) of \(G\) as its root.
An appendix summarizes details concerning induced homomorphisms between quotient groups,
which play a crucial role
in establishing the natural partial order on Artin patterns \((\tau(G),\varkappa(G))\)
and explaining the stabilization, resp. polarization, of their components
in descendant trees \(\mathcal{T}\) of finite \(p\)-groups.
\end{abstract}

\maketitle

%\newpage

%--------------------------------------------------------------------------------

\section{Introduction}
\label{s:Intro}
In the mathematical field of group theory,
an \textit{Artin transfer} is a certain homomorphism from an arbitrary finite or infinite group
to the commutator quotient group of a subgroup of finite index.

Originally, such transfer mappings arose as group theoretic counterparts of
class extension homomorphisms of abelian extensions of algebraic number fields
by applying Artin's reciprocity isomorphism
\cite[\S 4, Allgemeines Reziprozit\"atsgesetz, p.361]{Ar1}
to ideal class groups and
analyzing the resulting homomorphisms between quotients of Galois groups
\cite[\S 2, p.50]{Ar2}.

However, independently of number theoretic applications,
a \textit{natural partial order} on the \textit{kernels and targets} of Artin transfers,
has recently been found
to be compatible with parent-child relations between finite \(p\)-groups,
where \(p\) denotes a prime number.
Such ancestor-descendant relations can be visualized conveniently in \textit{descendant trees}
\cite[\S 4, pp.163--164]{Ma5}.

Consequently, Artin transfers provide valuable information
for classifying finite \(p\)-groups by \textit{kernel-target patterns}
and for searching and identifying particular groups in descendant trees
by looking for patterns defined by kernels and targets of Artin transfers.
These \textit{strategies of pattern recognition}
are useful not only in purely group theoretic context
but also, most importantly, for applications in algebraic number theory
concerning Galois groups of higher \(p\)-class fields and Hilbert \(p\)-class field towers.
The reason is that the unramified extensions of a base field
contain information in the shape of capitulation patterns and class group structures,
and these arithmetic invariants can be translated into
group theoretic data on transfer kernels and targets
by means of Artin's reciprocity law of class field theory.
The natural partial order on Artin patterns
admits \textit{termination criteria} for a search through a descendant tree
with the aid of recursive executions of the \(p\)-group generation algorithm
by Newman
\cite{Nm}
and O'Brien
\cite{Ob}.

The organization of this article is as follows.
The detailed theory of Artin transfers will be developed in \S\S\
\ref{s:TransversalsPermutations}
and
\ref{s:ArtinTransfer},
followed by computational implementations in \S\
\ref{s:CompImpl}.
It is our intention to present more than the least common multiple of
the original papers by Schur
\cite{Su}
and Artin
\cite{Ar2}
and the relevant sections of the text books by Hall
\cite{Hl},
Huppert
\cite{Hp},
Gorenstein
\cite{Gs},
Aschbacher
\cite{Ab},
Doerk and Hawkes
\cite{DkHw},
Smith and Tabachnikova
\cite{SmTb},
and Isaacs
\cite{Is}.

However, we shall not touch upon fusion and focal subgroups,
which form the primary goal of the mentioned authors, except Artin.
Our focus will rather be on a sound foundation
of \textit{Artin patterns}, consisting of families of transfer kernels and targets,
and their \textit{stabilization}, resp. \textit{polarization},
in descendant trees of finite \(p\)-groups.
These phenomena arise from a 
natural partial order on Artin patterns
which is compatible with ancestor-descendant relations in trees,
and is established in its most general form in \S\
\ref{s:TransferTargetKernel},
followed by impressive applications in \S\
\ref{s:StbAndPol}.

Since our endeavour is to give the most general view of each partial result,
we came to the conviction that categories, functors and natural transformations
are the adequate tool for expressing the appropriate range of validity for
the facts connected with the partial order relation on Artin patterns.
Inspired by Bourbaki's method of exposition
\cite{Bb},
an appendix in \S\
\ref{s:HomSngSbg}
on induced homomorphisms,
which is separated to avoid a disruption of the flow of exposition,
goes down to the origins exploiting set theoretic facts
concerning direct images and inverse pre-images of mappings
which are crucial for explaining the natural partial order of Artin patterns.

%\newpage

%--------------------------------------------------------------------------------

\section{Transversals and their permutations}
\label{s:TransversalsPermutations}

\subsection{Transversals of a subgroup}
\label{ss:Transversals}

Let \(G\) be a group and \(H\le G\) be a subgroup of finite index \(n=(G:H)\ge 1\).

%--------------------------------------------------------------------------------

\begin{definition}
\label{dfn:Transversals}
(See also
\cite[p.1013]{Su},
\cite[(1.5.1), p.11]{Hl},
\cite[Satz 2.5, p.5]{Hp}.)

\begin{enumerate}

\item
A \textit{left transversal} of \(H\) in \(G\) is an ordered system \((\ell_1,\ldots,\ell_n)\)
of representatives for the left cosets of \(H\) in \(G\)
such that \(G=\dot{\bigcup}_{i=1}^n\,\ell_iH\) is a disjoint union.

\item
Similarly, a \textit{right transversal} of \(H\) in \(G\) is an ordered system \((r_1,\ldots,r_n)\)
of representatives for the right cosets of \(H\) in \(G\)
such that \(G=\dot{\bigcup}_{i=1}^n\,Hr_i\) is a disjoint union.

\end{enumerate}

\end{definition}

%--------------------------------------------------------------------------------

\begin{remark}
\label{rmk:Transversals}

For any transversal of \(H\) in \(G\),
there exists a unique subscript \(1\le i_0\le n\) such that \(\ell_{i_0}\in H\), resp. \(r_{i_0}\in H\).
The element \(\ell_{i_0}\), resp. \(r_{i_0}\),
which represents the principal coset (i.e., the subgroup \(H\) itself)
may be replaced by the neutral element \(1\).

\end{remark}

%--------------------------------------------------------------------------------

\begin{lemma}
\label{lem:Transversals}
(See also
\cite[p.1015]{Su},
\cite[(1.5.2), p.11]{Hl},
\cite[Satz 2.6, p.6]{Hp}.)

\begin{enumerate}

\item
If \(G\) is non-abelian and \(H\) is not a normal subgroup of \(G\),
then we can only say that the inverse elements \((\ell_1^{-1},\ldots,\ell_n^{-1})\)
of a left transversal \((\ell_1,\ldots,\ell_n)\)
form a right transversal of \(H\) in \(G\).

\item
However, if \(H\unlhd G\) is a normal subgroup of \(G\),
then any left transversal is also a right transversal of \(H\) in \(G\).

\end{enumerate}

\end{lemma}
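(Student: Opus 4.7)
The plan is to treat the two items separately, handling (2) by a one-line unfolding of normality and (1) by applying the inversion anti-bijection to the disjoint left-coset decomposition.

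For part (2), I would start from the defining disjoint union \(G=\dot{\bigcup}_{i=1}^n\,\ell_iH\) of the left transversal. Since \(H\unlhd G\) is normal, the coset identity \(\ell_iH=H\ell_i\) holds for every index \(i\). Substituting on the right gives \(G=\dot{\bigcup}_{i=1}^n\,H\ell_i\), and because the original union was disjoint and only the individual summands were rewritten (not reindexed), disjointness is preserved. This is exactly the definition of \((\ell_1,\ldots,\ell_n)\) being a right transversal.

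For part (1), the key observation is that inversion \(\iota:G\to G\), \(g\mapsto g^{-1}\), is a bijection (in fact, an anti-isomorphism of the group), hence it carries any disjoint union onto a disjoint union. Applying \(\iota\) to \(G=\dot{\bigcup}_{i=1}^n\,\ell_iH\) and using \(\iota(\ell_iH)=H^{-1}\ell_i^{-1}=H\ell_i^{-1}\) (since \(H\) is a subgroup, \(H^{-1}=H\)), one obtains \(G=\iota(G)=\dot{\bigcup}_{i=1}^n\,H\ell_i^{-1}\), which shows that \((\ell_1^{-1},\ldots,\ell_n^{-1})\) is a right transversal of \(H\) in \(G\). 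The delicate point is the assertion \emph{``we can only say''}: one must observe that, without normality, the tuple \((\ell_1,\ldots,\ell_n)\) itself generally fails to be a right transversal. I would support this by the standard small example \(G=S_3\), \(H=\langle(1\,2)\rangle\), with the left transversal \((1,(1\,3),(1\,2\,3))\): the left cosets \(H\), \((1\,3)H=\{(1\,3),(1\,2\,3)\}\), \((1\,2\,3)H=\{(1\,2\,3),(1\,3)\}\) differ set-theoretically from the right cosets of the same representatives, confirming that in the non-normal situation the inversion passage is essential.

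The only conceptual obstacle is the interplay between the \emph{ordered} character of a transversal (as defined in Definition~\ref{dfn:Transversals}) and the set-theoretic decomposition; taking care that the index \(i\) is preserved under the operation \(\ell_i\mapsto\ell_i^{-1}\) is what guarantees that one obtains an \emph{ordered} right transversal and not merely an unordered system of right-coset representatives. Beyond this bookkeeping the argument is entirely formal, so the work is essentially finished once the anti-bijection argument for inversion and the coset identity \(\ell_iH=H\ell_i\) in the normal case have been made explicit.
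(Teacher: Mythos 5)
Your proof is correct and follows essentially the same route as the paper: part (2) is the one-line observation \(\ell_iH=H\ell_i\) for normal \(H\), and part (1) applies the inversion bijection \(x\mapsto x^{-1}\) to the disjoint decomposition \(G=\dot{\bigcup}_{i=1}^n\,\ell_iH\) to get \(G=\dot{\bigcup}_{i=1}^n\,H\ell_i^{-1}\), exactly as in the paper. One caveat on your optional \(S_3\) illustration (which the lemma itself does not require): as written, the sets you list for \((1\,3)H\) and \((1\,2\,3)H\) coincide, namely \(\lbrace(1\,3),(1\,2\,3)\rbrace\), so \((1,(1\,3),(1\,2\,3))\) is not a left transversal under the composition convention you used; choosing representatives of the three genuinely distinct left cosets of \(H=\langle(1\,2)\rangle\) would repair the example, but this does not affect the correctness of your proof of the lemma.
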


%--------------------------------------------------------------------------------

\begin{proof}

\begin{enumerate}

\item
Since the mapping \(G\to G\), \(x\mapsto x^{-1}\) is an involution, that is a bijection which is its own inverse, 
we see that\\
\(G=\dot{\bigcup}_{i=1}^n\,\ell_iH\) \quad implies \quad
\(G=G^{-1}=\dot{\bigcup}_{i=1}^n\,(\ell_iH)^{-1}
=\dot{\bigcup}_{i=1}^n\,H^{-1}\ell_i^{-1}=\dot{\bigcup}_{i=1}^n\,H\ell_i^{-1}\).

\item
For a normal subgroup \(H\unlhd G\), we have \(xH=Hx\) for each \(x\in G\).

\end{enumerate}

\end{proof}

%--------------------------------------------------------------------------------

\noindent
Let \(\phi:\,G\to T\) be a group homomorphism and
\((\ell_1,\ldots,\ell_n)\) be a left transversal of a subgroup \(H\) in \(G\)
with finite index \(n=(G:H)\ge 1\).
We must check whether the image of this transversal under the homomorphism is again a transversal.

\begin{proposition}
\label{prp:TransvUnderHom}

The following two conditions are equivalent.

\begin{enumerate}

\item
\((\phi(\ell_1),\ldots,\phi(\ell_n))\) is a left transversal
of the subgroup \(\phi(H)\) in the image \(\mathrm{im}(\phi)=\phi(G)\)
with finite index \((\phi(G):\phi(H))=n\).

\item
\(\ker(\phi)\le H\).

\end{enumerate}

\end{proposition}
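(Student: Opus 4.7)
The plan is to prove the two implications separately, both times exploiting the distinguished principal index $i_0$ with $\ell_{i_0}\in H$ from Remark~\ref{rmk:Transversals}, so that $\phi(\ell_{i_0})\phi(H)=\phi(H)$ contains the neutral element $1$.

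For the implication $(2)\Rightarrow(1)$, applying $\phi$ to the disjoint covering $G=\dot{\bigcup}_{i=1}^n\ell_iH$ immediately yields the covering $\phi(G)=\bigcup_{i=1}^n\phi(\ell_i)\phi(H)$, so the only real work is to establish pairwise distinctness of these $n$ image cosets. Supposing $\phi(\ell_i)\phi(H)=\phi(\ell_j)\phi(H)$, we get $\phi(\ell_j^{-1}\ell_i)\in\phi(H)$; choosing a witness $h\in H$ with $\phi(h)=\phi(\ell_j^{-1}\ell_i)$ places $h^{-1}\ell_j^{-1}\ell_i$ in $\ker(\phi)$. The hypothesis $\ker(\phi)\le H$ then forces $\ell_j^{-1}\ell_i\in H$, hence $\ell_iH=\ell_jH$ and finally $i=j$ by the transversal property of $(\ell_1,\ldots,\ell_n)$.

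For the converse $(1)\Rightarrow(2)$, let $x\in\ker(\phi)$ and pick the unique index $j$ with $x\in\ell_jH$, so that $\phi(x)=1\in\phi(\ell_j)\phi(H)$. Since $1$ also lies in $\phi(\ell_{i_0})\phi(H)=\phi(H)$, disjointness of the image decomposition gives $\phi(\ell_j)\phi(H)=\phi(\ell_{i_0})\phi(H)$, and distinctness of the $n$ image cosets (built into the transversal condition with index exactly $n$) then yields $j=i_0$. Consequently $x\in\ell_{i_0}H=H$, which is the desired inclusion $\ker(\phi)\le H$.

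There is no deep obstacle here; the single point that requires care is that the phrase ``left transversal with index $n$'' bundles two distinct pieces of information, namely pairwise distinctness of the $n$ cosets and exhaustiveness of their union, and both are genuinely used: exhaustiveness in the backward direction to get the covering, and distinctness in the forward direction to conclude $j=i_0$ from the mere equality of two cosets. The entire argument could alternatively be phrased via the canonical bijection between $H$-cosets in $G$ and $\phi(H)$-cosets in $\phi(G)$ induced by $\phi$ when $\ker(\phi)\le H$, but the direct transversal-based proof stays closest to the definitions used in this section.
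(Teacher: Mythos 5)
Your proof is correct and takes essentially the same approach as the paper: the implication \((2)\Rightarrow(1)\) is the identical kernel-based disjointness argument, and your \((1)\Rightarrow(2)\) is merely the direct form of the paper's contrapositive, using the principal representative \(\ell_{i_0}\) of Remark \ref{rmk:Transversals} where the paper compares the cosets \(xH\) and \(1\cdot H\). No gaps; the appeal to disjointness of the \(n\) image cosets to force \(j=i_0\) is exactly the point the paper's contraposition exploits.
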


%--------------------------------------------------------------------------------

\noindent
We emphasize this important equivalence in a formula:

\begin{equation}
\label{eqn:TransvUnderHom}
\phi(G)=\dot{\bigcup}_{i=1}^n\,\phi(\ell_i)\phi(H)\text{ and } (\phi(G):\phi(H))=n
\quad\Longleftrightarrow\quad
\ker(\phi)\le H
\end{equation}

%--------------------------------------------------------------------------------

\begin{proof}
By assumption,
we have the disjoint left coset decomposition \(G=\dot{\bigcup}_{i=1}^n\,\ell_iH\)
which comprises two statements simultaneously.\\
Firstly,
the group \(G=\bigcup_{i=1}^n\,\ell_iH\) is a union of cosets,\\
and secondly,
any two distinct cosets have an empty intersection \(\ell_iH\bigcap \ell_jH=\emptyset\), for \(i\ne j\).

Due to the properties of the set mapping associated with \(\phi\),
the homomorphism \(\phi\) maps the union to another union
\[\phi(G)=\phi(\bigcup_{i=1}^n\,\ell_iH)=\bigcup_{i=1}^n\,\phi(\ell_iH)=\bigcup_{i=1}^n\,\phi(\ell_i)\phi(H),\]
but weakens the equality for the intersection to a trivial inclusion
\[\emptyset=\phi(\emptyset)=\phi(\ell_iH\bigcap \ell_jH)\subseteq\phi(\ell_iH)\bigcap\phi(\ell_jH)
=\phi(\ell_i)\phi(H)\bigcap\phi(\ell_j)\phi(H), \text{ for } i\ne j.\]
To show that the images of the cosets remain disjoint
we need the property \(\ker(\phi)\le H\) of the homomorphism \(\phi\).

Suppose that \(\phi(\ell_i)\phi(H)\bigcap\phi(\ell_j)\phi(H)\ne\emptyset\) for some \(1\le i\le j\le n\),\\
then we have \(\phi(\ell_i)\phi(h_i)=\phi(\ell_j)\phi(h_j)\) for certain elements \(h_i,h_j\in H\).\\
Multiplying by \(\phi(\ell_j)^{-1}\) from the left and by \(\phi(h_j)^{-1}\) from the right, we obtain
\[\phi(\ell_j^{-1}\ell_ih_ih_j^{-1})=\phi(\ell_j)^{-1}\phi(\ell_i)\phi(h_i)\phi(h_j)^{-1}=1, \text{ that is, }
\ell_j^{-1}\ell_ih_ih_j^{-1}\in\ker(\phi)\le H.\]
Since \(h_ih_j^{-1}\in H\), this implies \(\ell_j^{-1}\ell_i\in H\), resp. \(\ell_iH=\ell_jH\), and thus \(i=j\).
(This part of the proof is also covered by
\cite[Thm.X.21, p.340]{Is}
and, in the context of normal subgroups instead of homomorphisms, by
\cite[Thm.2.3.4, p.29]{Hl}
and
\cite[Satz 3.10, p.16]{Hp}.)

Conversely, we use contraposition.\\
If the kernel \(\ker(\phi)\) of \(\phi\) is not contained in the subgroup \(H\),
then there exists an element \(x\in G\setminus H\) such that \(\phi(x)=1\).\\
But then the homomorphism \(\phi\) maps the disjoint cosets \(xH\bigcap 1\cdot H=\emptyset\)\\
to equal cosets \(\phi(x)\phi(H)\bigcap\phi(1)\phi(H)=1\cdot\phi(H)\bigcap 1\cdot\phi(H)=\phi(H)\).

\end{proof}

%\newpage

%--------------------------------------------------------------------------------

\normalsize

\subsection{Permutation representation}
\label{ss:PermRepr}

Suppose \((\ell_1,\ldots,\ell_n)\) is a left transversal of a subgroup \(H\le G\) of finite index \(n=(G:H)\ge 1\) in a group \(G\).
A fixed element \(x\in G\) gives rise to a unique permutation \(\lambda_x\in S_n\) of the left cosets of \(H\) in \(G\)
by left multiplication such that

\begin{equation}
\label{eqn:LeftPerm}
\begin{array}{ccccc}
x\ell_iH=\ell_{\lambda_x(i)}H, & \text{ resp. } & x\ell_i\in\ell_{\lambda_x(i)}H, & \text{ resp. } & u_x(i):=\ell_{\lambda_x(i)}^{-1}x\ell_i\in H,
\end{array}
\end{equation}

\noindent
for each \(1\le i\le n\).

Similarly, if \((r_1,\ldots,r_n)\) is a right transversal of \(H\) in \(G\), then
a fixed element \(x\in G\) gives rise to a unique permutation \(\rho_x\in S_n\) of the right cosets of \(H\) in \(G\)
by right multiplication such that

\begin{equation}
\label{eqn:RightPerm}
\begin{array}{ccccc}
Hr_ix=Hr_{\rho_x(i)}, & \text{ resp. } & r_ix\in Hr_{\rho_x(i)}, & \text{ resp. } & w_x(i):=r_ixr_{\rho_x(i)}^{-1}\in H,
\end{array}
\end{equation}

\noindent
for each \(1\le i\le n\).

The elements \(u_x(i)\), resp. \(w_x(i)\), \(1\le i\le n\), of the subgroup \(H\)
are called the \textit{monomials} associated with \(x\)
with respect to \((\ell_1,\ldots,\ell_n)\), resp. \((r_1,\ldots,r_n)\).

%--------------------------------------------------------------------------------

\begin{definition}
\label{dfn:PermRepr}
(See also \cite[Hauptsatz 6.2, p.28]{Hp}.)

The mapping \(G\to S_n,\ x\mapsto\lambda_x\), resp. \(G\to S_n,\ x\mapsto\rho_x\),
is called the \textit{permutation representation} of \(G\) in \(S_n\)
with respect to \((\ell_1,\ldots,\ell_n)\), resp. \((r_1,\ldots,r_n)\).

\end{definition}

%--------------------------------------------------------------------------------

\begin{lemma}
\label{lem:PermRepr}

For the special right transversal \((\ell_1^{-1},\ldots,\ell_n^{-1})\)
associated to the left transversal \((\ell_1,\ldots,\ell_n)\),
we have the following relations between
the monomials and permutations corresponding to an element \(x\in G\):

\begin{equation}
\label{eqn:PermRepr}
\begin{array}{ccc}
w_{x^{-1}}(i)=u_x(i)^{-1} \text{ for } 1\le i\le n & \text{ and } & \rho_{x^{-1}}=\lambda_x.
\end{array}
\end{equation}

\end{lemma}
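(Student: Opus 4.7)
The plan is to chase the defining equations of the permutation and monomial data forward through the involution \(x\mapsto x^{-1}\) on \(G\).

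First I would rewrite (\ref{eqn:LeftPerm}) in the equational form \(x\ell_i=\ell_{\lambda_x(i)}\,u_x(i)\) with \(u_x(i)\in H\), and simply take inverses of both sides. This yields
\[\ell_i^{-1}x^{-1}=u_x(i)^{-1}\,\ell_{\lambda_x(i)}^{-1},\]
and since \(H\) is a subgroup, \(u_x(i)^{-1}\in H\). Substituting the definition \(r_j:=\ell_j^{-1}\) of the special right transversal (whose validity as a right transversal is guaranteed by Lemma~\ref{lem:Transversals}(1)) converts this into
\[r_i\,x^{-1}=u_x(i)^{-1}\,r_{\lambda_x(i)},\qquad u_x(i)^{-1}\in H.\]

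Next I would compare with the defining equation (\ref{eqn:RightPerm}) for the element \(x^{-1}\in G\), namely \(r_i\,x^{-1}=w_{x^{-1}}(i)\,r_{\rho_{x^{-1}}(i)}\) with \(w_{x^{-1}}(i)\in H\). Because the right cosets \(Hr_1,\ldots,Hr_n\) are pairwise disjoint, the index \(j\) with \(r_i x^{-1}\in Hr_j\) and the corresponding element of \(H\) are uniquely determined. Matching the two factorizations then forces both conclusions simultaneously: \(\rho_{x^{-1}}(i)=\lambda_x(i)\) for every \(1\le i\le n\), which is the permutation identity \(\rho_{x^{-1}}=\lambda_x\), and \(w_{x^{-1}}(i)=u_x(i)^{-1}\).

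There is no real obstacle here; the lemma is essentially a reformulation of the identity \((ab)^{-1}=b^{-1}a^{-1}\) once one has chosen the correct transversal on the right. The only point requiring mild care is the appeal to uniqueness of the coset representative in step two, which is exactly what makes \((r_1,\ldots,r_n)\) a transversal in the sense of Definition~\ref{dfn:Transversals}.
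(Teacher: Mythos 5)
Your proposal is correct and follows essentially the same route as the paper: invert the defining relation \(x\ell_i=\ell_{\lambda_x(i)}u_x(i)\), pass to the right transversal \(r_j=\ell_j^{-1}\), and identify the result with the defining factorization of \(w_{x^{-1}}(i)\). Your explicit appeal to the uniqueness of the right-coset representative is a slightly more careful rendering of what the paper's chain of equalities uses implicitly, but the argument is the same.
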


%--------------------------------------------------------------------------------

\begin{proof}
For the right transversal \((\ell_1^{-1},\ldots,\ell_n^{-1})\), we have
\(w_x(i)=\ell_i^{-1}x\ell_{\rho_x(i)}\), for each \(1\le i\le n\).\\
On the other hand,
for the left transversal \((\ell_1,\ldots,\ell_n)\), we have

\begin{center}
\(u_x(i)^{-1}=(\ell_{\lambda_x(i)}^{-1}x\ell_i)^{-1}=\ell_i^{-1}x^{-1}\ell_{\lambda_x(i)}
=\ell_i^{-1}x^{-1}\ell_{\rho_{x^{-1}}(i)}=w_{x^{-1}}(i)\),
for each \(1\le i\le n\).
\end{center}

\noindent
This relation simultaneously shows that, for any \(x\in G\),
the permutation representations and the associated monomials are connected by

\begin{equation*}
\begin{array}{ccc}
\rho_{x^{-1}}=\lambda_x & \text{ and } & w_{x^{-1}}(i)=u_x(i)^{-1},
\end{array}
\end{equation*}

\noindent
for each \(1\le i\le n\).
\end{proof}

%\newpage

%--------------------------------------------------------------------------------

\section{Artin transfer}
\label{s:ArtinTransfer}

Let \(G\) be a group and \(H\le G\) be a subgroup of finite index \(n=(G:H)\ge 1\).
Assume that \((\ell_1,\ldots,\ell_n)\), resp. \((r_1,\ldots,r_n)\),
is a left, resp. right, transversal of \(H\) in \(G\)
with associated permutation representation \(G\to S_n\),
\(x\mapsto\lambda_x\), resp. \(\rho_x\), such that
\(u_x(i):=\ell_{\lambda_x(i)}^{-1}x\ell_i\in H\), resp. \(w_x(i):=r_ixr_{\rho_x(i)}^{-1}\in H\),
for \(1\le i\le n\).

%--------------------------------------------------------------------------------

\begin{definition}
\label{dfn:ArtinTransfer}
(See also
\cite[p.1014]{Su},
\cite[\S 2, p.50]{Ar2},
\cite[(14.2.2--4), p.202]{Hl},
\cite[p.413]{Hp},
\cite[p.248]{Gs},
\cite[p.197]{Ab},
\cite[Dfn.(17.1), p.60]{DkHw},
\cite[p.154]{SmTb}
\cite[p.149]{Is},
\cite[p.2]{Ol}.)

The \textit{Artin transfer} \(T_{G,H}:\ G\to H/H^\prime\) from \(G\) to the abelianization \(H/H^\prime\) of \(H\)
with respect to \((\ell_1,\ldots,\ell_n)\), resp. \((r_1,\ldots,r_n)\), is defined by

\begin{equation}
\label{eqn:LeftTransfer}
\begin{array}{ccc}
T_{G,H}^{(\ell)}(x):=\prod_{i=1}^n\,\ell_{\lambda_x(i)}^{-1}x\ell_i\cdot H^\prime & \text{ or briefly } & T_{G,H}(x)=\prod_{i=1}^n\,u_x(i)\cdot H^\prime,
\end{array}
\end{equation}

\noindent
resp.

\begin{equation}
\label{eqn:RightTransfer}
\begin{array}{ccc}
T_{G,H}^{(r)}(x):=\prod_{i=1}^n\,r_ixr_{\rho_x(i)}^{-1}\cdot H^\prime & \text{ or briefly } & T_{G,H}(x)=\prod_{i=1}^n\,w_x(i)\cdot H^\prime,
\end{array}
\end{equation}

\noindent
for \(x\in G\).

\end{definition}

%\newpage

%--------------------------------------------------------------------------------

\begin{remark}
\label{rmk:Transfer}
I.M. Isaacs
\cite[p.149]{Is}
calls the mapping \(P:\,G\to H\),
\(x\mapsto\prod_{i=1}^n\,u_x(i)\), resp. \(x\mapsto\prod_{i=1}^n\,w_x(i)\),
the \textit{pre-transfer} from \(G\) to \(H\).
The pre-transfer can be composed with
a homomorphism \(\phi:\,H\to A\) from \(H\) into an abelian group \(A\)
to define a more general version of the \textit{transfer} \((\phi\circ P):\,G\to A\),
\(x\mapsto\prod_{i=1}^n\,\phi(u_x(i))\), resp. \(x\mapsto\prod_{i=1}^n\,\phi(w_x(i))\),
from \(G\) to \(A\) via \(\phi\), which occurs in the book by D. Gorenstein
\cite[p.248]{Gs}.
Taking the natural epimorphism \(\phi:\,H\to H/H^\prime\), \(v\mapsto vH^\prime\),
yields the Definition
\ref{dfn:ArtinTransfer}
of the \textit{Artin transfer} \(T_{G,H}\) in its original form by I. Schur
\cite[p.1014]{Su}
and by E. Artin
\cite[\S 2, p.50]{Ar2},
which has also been dubbed \textit{Verlagerung} by H. Hasse
\cite[\S 27.4, pp.170--171]{Ha}.
Note that, in general, the pre-transfer is
neither independent of the transversal nor a group homomorphism.
\end{remark}

%\newpage

%--------------------------------------------------------------------------------

\subsection{Independence of the transversal}
\label{ss:Independence}

Assume that \((g_1,\ldots,g_n)\) is another left transversal of \(H\) in \(G\)
such that \(G=\dot{\cup}_{i=1}^n\,g_iH\).

%--------------------------------------------------------------------------------

\begin{proposition}
\label{prp:LeftLeft}
(See also
\cite[p.1014]{Su},
\cite[Thm.14.2.1, p.202]{Hl},
\cite[Hilfssatz 1.5, p.414]{Hp},
\cite[Thm.3.2, p.246]{Gs},
\cite[(37.1), p.198]{Ab},
\cite[Thm.(17.2), p.61]{DkHw},
\cite[p.154]{SmTb},
\cite[Thm.5.1, p.149]{Is},
\cite[Prop.2, p.2]{Ol}.)

The Artin transfers with respect to \((g)\) and \((\ell)\) coincide, \(T_{G,H}^{(g)}=T_{G,H}^{(\ell)}\).

\end{proposition}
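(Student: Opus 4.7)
The plan is to compare the two transversals by writing each $g_i$ as $\ell_{\pi(i)}h_i$ for some permutation $\pi\in S_n$ and elements $h_i\in H$, and then reduce the difference of the two transfers to a product of $h_i$'s and their inverses, which vanishes upon abelianization. This is the classical ``change of transversal'' argument.

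First I would invoke the fact that two left transversals of $H$ in $G$ determine the same collection of left cosets: the map $i\mapsto g_iH$ and the map $i\mapsto \ell_iH$ are both bijections from $\{1,\ldots,n\}$ onto $G/H$, so there is a unique permutation $\pi\in S_n$ with $g_iH=\ell_{\pi(i)}H$, equivalently $g_i=\ell_{\pi(i)}h_i$ for a unique $h_i\in H$. Next, for a fixed $x\in G$, let $\lambda_x$, respectively $\mu_x$, denote the permutation of $\{1,\ldots,n\}$ arising from $(\ell)$, respectively $(g)$, as in \eqref{eqn:LeftPerm}. Applying $x$ to $g_iH=\ell_{\pi(i)}H$ and using the defining property of $\lambda_x$ and $\mu_x$ yields $\ell_{\pi(\mu_x(i))}H=g_{\mu_x(i)}H=xg_iH=x\ell_{\pi(i)}H=\ell_{\lambda_x(\pi(i))}H$, hence the conjugation relation $\pi\circ\mu_x=\lambda_x\circ\pi$, i.e.\ $\mu_x=\pi^{-1}\lambda_x\pi$.

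Now I would compute the $(g)$-monomial
\[v_x(i):=g_{\mu_x(i)}^{-1}xg_i=h_{\mu_x(i)}^{-1}\ell_{\pi(\mu_x(i))}^{-1}x\ell_{\pi(i)}h_i=h_{\mu_x(i)}^{-1}\,u_x(\pi(i))\,h_i,\]
using the identity $\pi(\mu_x(i))=\lambda_x(\pi(i))$ from the previous step. Taking the product over $i$ and reducing modulo $H^\prime$, where $H/H^\prime$ is abelian and products may be rearranged freely, one obtains
\[T_{G,H}^{(g)}(x)=\prod_{i=1}^n v_x(i)\cdot H^\prime=\Bigl(\prod_{i=1}^n h_{\mu_x(i)}^{-1}\Bigr)\Bigl(\prod_{i=1}^n u_x(\pi(i))\Bigr)\Bigl(\prod_{i=1}^n h_i\Bigr)\cdot H^\prime.\]
Since $\mu_x$ and $\pi$ are permutations of $\{1,\ldots,n\}$, relabeling the indices gives $\prod_i h_{\mu_x(i)}^{-1}\equiv\prod_i h_i^{-1}\pmod{H^\prime}$ and $\prod_i u_x(\pi(i))\equiv\prod_i u_x(i)\pmod{H^\prime}$. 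Commutativity in $H/H^\prime$ then cancels the $h_i$ factors against the $h_i^{-1}$ factors, leaving $T_{G,H}^{(g)}(x)=\prod_{i=1}^n u_x(i)\cdot H^\prime=T_{G,H}^{(\ell)}(x)$.

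The only genuinely delicate point is the bookkeeping in the permutation identity $\mu_x=\pi^{-1}\lambda_x\pi$, which is what allows the indexing set $\{\pi(\mu_x(i))\}$ to be rewritten as $\{\lambda_x(\pi(i))\}$ inside the definition of $v_x(i)$; once this is in place, the rest is a straightforward application of the abelian nature of $H/H^\prime$ and the invariance of a finite product under reindexing by a permutation. No further structure of $G$ is required, so the argument proceeds uniformly for arbitrary finite index $n=(G:H)$.
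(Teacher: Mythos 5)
Your argument is correct and is essentially the paper's own proof: you write one transversal in terms of the other via a permutation $\pi$ and elements $h_i\in H$, derive the conjugation relation between the two permutation representations, express the $(g)$-monomials as $h_{\mu_x(i)}^{-1}u_x(\pi(i))h_i$, and then use commutativity of $H/H^\prime$ together with reindexing by permutations to cancel the $h_i$ factors. The only difference is notational (your $\pi$ is the inverse of the paper's $\sigma$, and you evaluate $v_x(i)$ directly rather than $v_x(\sigma(i))$), which does not change the substance of the argument.
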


%--------------------------------------------------------------------------------

\begin{proof}
There exists a unique permutation \(\sigma\in S_n\) such that \(\ell_iH=g_{\sigma(i)}H\), for all \(1\le i\le n\).
Consequently, \(h_i:=\ell_i^{-1}g_{\sigma(i)}\in H\), resp. \(g_{\sigma(i)}=\ell_ih_i\) with \(h_i\in H\),
for all \(1\le i\le n\).
For a fixed element \(x\in G\), there exists a unique permutation \(\gamma_x\in S_n\) such that we have

\begin{center}
\(g_{\gamma_x(\sigma(i))}H=xg_{\sigma(i)}H=x\ell_ih_iH=x\ell_iH
=\ell_{\lambda_x(i)}H=\ell_{\lambda_x(i)}h_{\lambda_x(i)}H=g_{\sigma(\lambda_x(i))}H\),
\end{center}

\noindent
for all \(1\le i\le n\).
Therefore, the permutation representation of \(G\) with respect to \((g_1,\ldots,g_n)\) is given by
\(\gamma_x\circ\sigma=\sigma\circ\lambda_x\), resp. \(\gamma_x=\sigma\circ\lambda_x\circ\sigma^{-1}\in S_n\),
for \(x\in G\).
Furthermore, for the connection between the elements
\(v_x(i):=g_{\gamma_x(i)}^{-1}xg_i\in H\) and \(u_x(i):=\ell_{\lambda_x(i)}^{-1}x\ell_i\in H\), we obtain

\begin{align*}
v_x(\sigma(i)) &= g_{\gamma_x(\sigma(i))}^{-1}xg_{\sigma(i)}
=g_{\sigma(\lambda_x(i))}^{-1}x\ell_ih_i=(\ell_{\lambda_x(i)}h_{\lambda_x(i)})^{-1}x\ell_ih_i\\
               &= h_{\lambda_x(i)}^{-1}\ell_{\lambda_x(i)}^{-1}x\ell_ih_i=h_{\lambda_x(i)}^{-1}u_x(i)h_i,
\end{align*}

\noindent
for all \(1\le i\le n\).
Finally, due to the commutativity of the quotient group \(H/H^\prime\)
and the fact that \(\sigma\) and \(\lambda_x\) are permutations,
the Artin transfer turns out to be independent of the left transversal: 

\begin{align*}
T_{G,H}^{(g)}(x) &= \prod_{i=1}^n\,v_x(\sigma(i))\cdot H^\prime
=\prod_{i=1}^n\,h_{\lambda_x(i)}^{-1}u_x(i)h_i\cdot H^\prime\\
                    &= \prod_{i=1}^n\,u_x(i)\prod_{i=1}^n\,h_{\lambda_x(i)}^{-1}\prod_{i=1}^n\,h_i\cdot H^\prime\\
                    &= \prod_{i=1}^n\,u_x(i)\cdot 1\cdot H^\prime=\prod_{i=1}^n\,u_x(i)\cdot H^\prime
=T_{G,H}^{(\ell)}(x),
\end{align*}

\noindent
as prescribed in Definition
\ref{dfn:ArtinTransfer},
equation
(\ref{eqn:LeftTransfer}).
\end{proof}

%--------------------------------------------------------------------------------

It is clear that a similar proof shows that the Artin transfer is independent of
the choice between two different right transversals. 
It remains to show that the Artin transfer with respect to a right transversal
coincides with the Artin transfer with respect to a left transversal.

%--------------------------------------------------------------------------------

For this purpose,
we select the special right transversal \((\ell_1^{-1},\ldots,\ell_n^{-1})\)
associated to the left transversal \((\ell_1,\ldots,\ell_n)\),
as explained in Remark
\ref{rmk:Transversals}
and Lemma
\ref{lem:PermRepr}.

\begin{proposition}
\label{prp:RightLeft}

The Artin transfers with respect to \((\ell^{-1})\) and \((\ell)\) coincide,  \(T_{G,H}^{(\ell^{-1})}=T_{G,H}^{(\ell)}\).

\end{proposition}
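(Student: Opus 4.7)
The plan is to show that each factor $w_x(i)=\ell_i^{-1}x\ell_{\rho_x(i)}$ of the right-transfer (with respect to $(\ell_1^{-1},\ldots,\ell_n^{-1})$) equals, after a permutation of the index, one of the factors $u_x(j)=\ell_{\lambda_x(j)}^{-1}x\ell_j$ of the left-transfer; once this bijection between monomials is established, the equality in $H/H^\prime$ follows from commutativity.

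First I would pin down the permutation $\rho_x$ attached to the right transversal $(\ell_1^{-1},\ldots,\ell_n^{-1})$. From the defining relation $Hr_ix=Hr_{\rho_x(i)}$ with $r_i=\ell_i^{-1}$, taking inverses of both cosets gives $x^{-1}\ell_iH=\ell_{\rho_x(i)}H$. Comparing with the action of $x^{-1}$ from the left on $(\ell_1,\ldots,\ell_n)$, which by definition satisfies $x^{-1}\ell_iH=\ell_{\lambda_{x^{-1}}(i)}H$, yields $\rho_x=\lambda_{x^{-1}}$. A short verification (compose the defining relations for $x$ and $x^{-1}$) shows that $\lambda$ is a homomorphism $G\to S_n$, so $\lambda_{x^{-1}}=\lambda_x^{-1}$. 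Hence $\rho_x=\lambda_x^{-1}$, which is also consistent with the identity $\rho_{x^{-1}}=\lambda_x$ established in Lemma~\ref{lem:PermRepr}.

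Next I would substitute $j:=\lambda_x^{-1}(i)$, equivalently $i=\lambda_x(j)$, into the formula
\[
w_x(i)=\ell_i^{-1}x\ell_{\rho_x(i)}=\ell_i^{-1}x\ell_{\lambda_x^{-1}(i)}.
\]
This rewrites as $w_x(i)=\ell_{\lambda_x(j)}^{-1}x\ell_j=u_x(j)$, so the assignment $i\mapsto\lambda_x^{-1}(i)$ matches the right-monomials with the left-monomials.

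Finally, using that $H/H^\prime$ is abelian and that $\lambda_x^{-1}$ is a permutation of $\{1,\ldots,n\}$, I would compute
\begin{align*}
T_{G,H}^{(\ell^{-1})}(x)
&=\prod_{i=1}^n w_x(i)\cdot H^\prime
=\prod_{i=1}^n u_x(\lambda_x^{-1}(i))\cdot H^\prime\\
&=\prod_{j=1}^n u_x(j)\cdot H^\prime
=T_{G,H}^{(\ell)}(x),
\end{align*}
which is the desired identity. The only delicate point is the correct identification $\rho_x=\lambda_x^{-1}$; once that is in hand, the rest is a routine re-indexing argument in the abelianized group.
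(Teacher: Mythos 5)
Your proof is correct, but it follows a genuinely different route from the paper's. The paper applies Lemma~\ref{lem:PermRepr} in the form \(w_x(i)=u_{x^{-1}}(i)^{-1}\), so that \(T_{G,H}^{(\ell^{-1})}(x)=\bigl(T_{G,H}^{(\ell)}(x^{-1})\bigr)^{-1}\), and then invokes the homomorphism property of the Artin transfer (a forward reference to Theorem~\ref{thm:Hom}) to convert this into \(T_{G,H}^{(\ell)}(x)\). You instead identify the permutation attached to the right transversal \((\ell_1^{-1},\ldots,\ell_n^{-1})\) as \(\rho_x=\lambda_{x^{-1}}=\lambda_x^{-1}\) and match the monomials of the \emph{same} element \(x\) term by term, \(w_x(\lambda_x(j))=u_x(j)\), so the two products agree in \(H/H^\prime\) after a mere re-indexing. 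What your approach buys is independence from Theorem~\ref{thm:Hom}: you only need the multiplicativity of the permutation representation \(x\mapsto\lambda_x\), which is elementary and logically prior to the transfer being a homomorphism — in fact you could even dispense with that, since \(x^{-1}\ell_iH=\ell_{\rho_x(i)}H\) gives \(\ell_iH=\ell_{\lambda_x(\rho_x(i))}H\), hence \(\lambda_x\circ\rho_x=\mathrm{id}\) directly. What the paper's argument buys is brevity: it is a one-line consequence of the already proved Lemma~\ref{lem:PermRepr}, at the cost of a forward dependency (harmless, since Theorem~\ref{thm:Hom} is proved for the left transversal without using this proposition). Your identification \(\rho_x=\lambda_x^{-1}\), the pivot of the argument, is correct and consistent with \(\rho_{x^{-1}}=\lambda_x\) in Lemma~\ref{lem:PermRepr}.
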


%--------------------------------------------------------------------------------

\begin{proof}
Using
(\ref{eqn:PermRepr})
in Lemma
\ref{lem:PermRepr}
and the commutativity of \(H/H^\prime\), we consider the expression

\begin{align*}
T_{G,H}^{(\ell^{-1})}(x) &= \prod_{i=1}^n\,\ell_i^{-1}x\ell_{\rho_x(i)}\cdot H^\prime
=\prod_{i=1}^n\,w_x(i)\cdot H^\prime=\prod_{i=1}^n\,u_{x^{-1}}(i)^{-1}\cdot H^\prime \\
                      &= (\prod_{i=1}^n\,u_{x^{-1}}(i)\cdot H^\prime)^{-1}
=(T_{G,H}^{(\ell)}(x^{-1}))^{-1}=T_{G,H}^{(\ell)}(x).
\end{align*}

\noindent
The last step is justified by the fact that the Artin transfer is a homomorphism.
This will be shown in the following subsection
\ref{ss:Hom}.
\end{proof}

%\newpage

%--------------------------------------------------------------------------------

\subsection{Artin transfers as homomorphisms}
\label{ss:Hom}

Let \((\ell_1,\ldots,\ell_n)\) be a left transversal of \(H\) in \(G\).

\begin{theorem}
\label{thm:Hom}
(See also
\cite[(2), p.1014]{Su},
\cite[Thm.14.2.1, p.202]{Hl},
\cite[Hauptsatz 1.4, p.413]{Hp},
\cite[Thm.3.2, p.246]{Gs},
\cite[(37.2), p.198]{Ab},
\cite[Thm.(17.2), p.61]{DkHw},
\cite[p.155]{SmTb},
\cite[Thm.5.2, p.150]{Is},
\cite[Prop.1, p.2]{Ol}.)

The Artin transfer \(T_{G,H}:\ G\to H/H^\prime\),
\(x\mapsto\prod_{i=1}^n\,\ell_{\lambda_x(i)}^{-1}x\ell_i\cdot H^\prime\)
and the permutation representation \(G\to S_n,\ x\mapsto\lambda_x\) are group homomorphisms:

\begin{equation}
\label{eqn:Hom}
T_{G,H}(xy)=T_{G,H}(x)\cdot T_{G,H}(y) \text{ and } \lambda_{xy}=\lambda_x\circ\lambda_y
\text{ for } x,y\in G.
\end{equation}

\end{theorem}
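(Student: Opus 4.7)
The plan is to prove the two homomorphism properties in a single interleaved computation, handling the permutation representation first and then feeding its multiplicativity into the transfer calculation.

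First I would establish $\lambda_{xy}=\lambda_x\circ\lambda_y$ directly from the defining relation \eqref{eqn:LeftPerm}. Applied to $xy$, it reads $xy\ell_iH=\ell_{\lambda_{xy}(i)}H$; on the other hand, applying the defining relation twice gives $x(y\ell_iH)=x\ell_{\lambda_y(i)}H=\ell_{\lambda_x(\lambda_y(i))}H$. Since a left transversal uniquely determines the coset representative index, these agree, yielding $\lambda_{xy}=\lambda_x\circ\lambda_y$.

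Next I would compute the pre-transfer monomial for a product. Inserting $\ell_{\lambda_y(i)}\ell_{\lambda_y(i)}^{-1}=1$ into $u_{xy}(i)=\ell_{\lambda_{xy}(i)}^{-1}\,xy\,\ell_i$ and using the multiplicativity of $\lambda$ from the first step, I obtain
\begin{equation*}
u_{xy}(i)=\ell_{\lambda_x(\lambda_y(i))}^{-1}\,x\,\ell_{\lambda_y(i)}\cdot\ell_{\lambda_y(i)}^{-1}\,y\,\ell_i=u_x(\lambda_y(i))\cdot u_y(i),
\end{equation*}
an identity in $H$ (not yet modulo $H^\prime$). Multiplying these identities as $i$ ranges over $1,\ldots,n$ and then reducing modulo $H^\prime$ gives
\begin{equation*}
T_{G,H}(xy)=\prod_{i=1}^n u_x(\lambda_y(i))\cdot u_y(i)\cdot H^\prime.
\end{equation*}

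At this point the key passage is to split the product into the two separate products $\prod_i u_x(\lambda_y(i))$ and $\prod_i u_y(i)$; this is where commutativity of $H/H^\prime$ is crucial and is the only nontrivial step, since in $H$ itself the factors $u_x(\lambda_y(i))$ and $u_y(i)$ do not interlace cleanly. Once separated, I would reindex the first product via the bijection $j=\lambda_y(i)$ to recognize it as $\prod_{j=1}^n u_x(j)\cdot H^\prime=T_{G,H}(x)$, while the second is $T_{G,H}(y)$ by definition. Combining these gives $T_{G,H}(xy)=T_{G,H}(x)\cdot T_{G,H}(y)$. The main conceptual obstacle is precisely this bookkeeping, namely that one must first pass to the abelian quotient before permuting the factors; everything else is formal substitution.
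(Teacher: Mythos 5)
Your proposal is correct and follows essentially the same route as the paper: the identity $u_{xy}(i)=u_x(\lambda_y(i))\cdot u_y(i)$ obtained by inserting $\ell_{\lambda_y(i)}\ell_{\lambda_y(i)}^{-1}$, the use of commutativity of $H/H^\prime$, and the reindexing by the permutation $\lambda_y$ are exactly the ingredients of the paper's computation, which merely runs the same chain of equalities in the opposite direction (starting from $T_{G,H}(x)\cdot T_{G,H}(y)$ and arriving at $T_{G,H}(xy)$, with $\lambda_{xy}=\lambda_x\circ\lambda_y$ noted along the way rather than proved first).
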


%--------------------------------------------------------------------------------

\begin{proof}
Let \(x,y\in G\) be two elements with transfer images
\(T_{G,H}(x)=\prod_{i=1}^n\,\ell_{\lambda_x(i)}^{-1}x\ell_i\cdot H^\prime\) and
\(T_{G,H}(y)=\prod_{j=1}^n\,\ell_{\lambda_y(j)}^{-1}y\ell_j\cdot H^\prime\).
Since \(H/H^\prime\) is abelian and \(\lambda_y\) is a permutation,
we can change the order of the factors in the following product:

\begin{align*}
T_{G,H}(x)\cdot T_{G,H}(y) &= \prod_{i=1}^n\,\ell_{\lambda_x(i)}^{-1}x\ell_i\cdot H^\prime
\cdot\prod_{j=1}^n\,\ell_{\lambda_y(j)}^{-1}y\ell_j\cdot H^\prime \\
                           &= \prod_{j=1}^n\,\ell_{\lambda_x(\lambda_y(j))}^{-1}x\ell_{\lambda_y(j)}\cdot H^\prime
\cdot\prod_{j=1}^n\,\ell_{\lambda_y(j)}^{-1}y\ell_j\cdot H^\prime \\
&= \prod_{j=1}^n\,\ell_{\lambda_x(\lambda_y(j))}^{-1}x\ell_{\lambda_y(j)}\ell_{\lambda_y(j)}^{-1}y\ell_j\cdot H^\prime \\
                           &= \prod_{j=1}^n\,\ell_{(\lambda_x\circ\lambda_y)(j))}^{-1}xy\ell_j\cdot H^\prime
=T_{G,H}(xy).
\end{align*}

\noindent
This relation simultaneously shows that the Artin transfer \(T_{G,H}\)
and the permutation representation \(G\to S_n,\ x\mapsto\lambda_x\) are homomorphisms,
since \(T_{G,H}(xy)=T_{G,H}(x)\cdot T_{G,H}(y)\) and \(\lambda_{xy}=\lambda_x\circ\lambda_y\),
in a \textit{covariant} way.
\end{proof}

%\newpage

%--------------------------------------------------------------------------------

\subsection{Monomial representation}
\label{ss:MonRepr}

Let \((\ell_1,\ldots,\ell_n)\), resp. \((r_1,\ldots,r_n)\),
be a left, resp. right, transversal of a subgroup \(H\) in a group \(G\).
Using the monomials \(u_x(i)\), resp. \(w_x(i)\),
associated with an element \(x\in G\)
according to Equation
(\ref{eqn:LeftPerm}),
resp.
(\ref{eqn:RightPerm}),
we define.

%--------------------------------------------------------------------------------

\begin{definition}
\label{dfn:MonRepr}

The mapping
\(G\to H^n\times S_n,\ x\mapsto(u_x(1),\ldots,u_x(n);\lambda_x)\), respectively
\(G\to H^n\times S_n,\ x\mapsto(w_x(1),\ldots,w_x(n);\rho_x)\),
is called the \textit{monomial representation} of \(G\) in \(H^n\times S_n\)
with respect to \((\ell_1,\ldots,\ell_n)\), resp. \((r_1,\ldots,r_n)\).
\end{definition}

%--------------------------------------------------------------------------------

\noindent
It is illuminating to restate the homomorphism property of the Artin transfer
in terms of the monomial representation.
The images of the factors \(x,y\) are given by
\(T_{G,H}(x)=\prod_{i=1}^n\,u_x(i)\cdot H^\prime\) and
\(T_{G,H}(y)=\prod_{j=1}^n\,u_y(j)\cdot H^\prime\).
In the proof of Theorem
\ref{thm:Hom},
the image of the product \(xy\) turned out to be
\(T_{G,H}(xy)=
\prod_{j=1}^n\,\ell_{\lambda_x(\lambda_y(j))}^{-1}x\ell_{\lambda_y(j)}\ell_{\lambda_y(j)}^{-1}y\ell_j\cdot H^\prime
=\prod_{j=1}^n\,u_x(\lambda_y(j))\cdot u_y(j)\cdot H^\prime\),
which is a very peculiar \textit{law of composition} discussed in more detail in the sequel.

The law reminds of the \textit{crossed homomorphisms} \(x\mapsto u_x\)
in the \textit{first cohomology group} \(\mathrm{H}^1(G,M)\)
of a \(G\)-module \(M\), which have the property
\(u_{xy}=u_x^y\cdot u_y\), for \(x,y\in G\).

These peculiar structures can also be interpreted by endowing the cartesian product \(H^n\times S_n\)
with a special law of composition known as the
\textit{wreath product} \(H\wr S_n\) of the groups \(H\) and \(S_n\)
with respect to the set \(\lbrace 1,\ldots,n\rbrace\).

%--------------------------------------------------------------------------------

\begin{definition}
\label{dfn:WreathProduct}
For \(x,y\in G\),
the \textit{wreath product} of the associated monomials and permutations is given by

\begin{equation}
\label{eqn:MonRepr}
\begin{array}{cl}
(u_x(1),\ldots,u_x(n);\lambda_x)\cdot (u_y(1),\ldots,u_y(n);\lambda_y) &
:=(u_x(\lambda_y(1))\cdot u_y(1),\ldots,u_x(\lambda_y(n))\cdot u_y(n);\lambda_x\circ\lambda_y) \\
 & =(u_{xy}(1),\ldots,u_{xy}(n);\lambda_{xy}),
\end{array}
\end{equation}

\end{definition}

%--------------------------------------------------------------------------------

\begin{theorem}
\label{thm:MonRepr}
(See also
\cite[Thm.14.1, p.200]{Hl},
\cite[Hauptsatz 1.4, p.413]{Hp}.)

This law of composition on \(H^n\times S_n\) causes the monomial representation
\(G\to H\wr S_n,\ x\mapsto(u_x(1),\ldots,u_x(n);\lambda_x)\)
also to be a homomorphism.
In fact, it is a \(\mathrm{faithful}\) representation,
that is an injective homomorphism, also called a monomorphism or embedding,
in contrast to the permutation representation.

\end{theorem}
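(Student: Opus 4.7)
The plan has two parts, matching the two assertions: (i) the assignment $x\mapsto(u_x(1),\ldots,u_x(n);\lambda_x)$ respects the wreath product multiplication defined in \eqref{eqn:MonRepr}, and (ii) the resulting homomorphism is injective.

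For (i), the homomorphism property is essentially a repackaging of the identity already extracted inside the proof of Theorem \ref{thm:Hom}. Concretely, I would recall $\lambda_{xy}=\lambda_x\circ\lambda_y$, which was proved there, and then establish the componentwise formula
\begin{equation*}
u_{xy}(j)=u_x(\lambda_y(j))\cdot u_y(j)\quad\text{for }1\le j\le n,
\end{equation*}
by inserting the trivial factor $\ell_{\lambda_y(j)}\ell_{\lambda_y(j)}^{-1}$:
\begin{equation*}
u_{xy}(j)=\ell_{\lambda_{xy}(j)}^{-1}(xy)\ell_j
=\ell_{\lambda_x(\lambda_y(j))}^{-1}x\ell_{\lambda_y(j)}\cdot\ell_{\lambda_y(j)}^{-1}y\ell_j
=u_x(\lambda_y(j))\cdot u_y(j).
\end{equation*}
Comparing this with the definition \eqref{eqn:MonRepr} of the wreath product shows immediately that $(u_x;\lambda_x)\cdot(u_y;\lambda_y)=(u_{xy};\lambda_{xy})$, which is the asserted homomorphism property. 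Along the way one should note that the wreath product law \eqref{eqn:MonRepr} is indeed a group law on $H^n\times S_n$, with neutral element $(1,\ldots,1;\mathrm{id})$; associativity is a short direct check using associativity in $H$ and in $S_n$.

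For (ii), the faithfulness, I would compute the kernel directly. Suppose $x\in G$ satisfies $(u_x(1),\ldots,u_x(n);\lambda_x)=(1,\ldots,1;\mathrm{id})$. By Remark \ref{rmk:Transversals} we may choose the transversal so that some $\ell_{i_0}=1$ represents the principal coset. From $\lambda_x=\mathrm{id}$ we get $\lambda_x(i_0)=i_0$, and then
\begin{equation*}
1=u_x(i_0)=\ell_{\lambda_x(i_0)}^{-1}x\ell_{i_0}=1\cdot x\cdot 1=x,
\end{equation*}
so $x=1$. (Independent of this choice, $u_x(i)=\ell_i^{-1}x\ell_i=1$ already forces $x=1$ because conjugation is an automorphism.) Hence the monomial representation is injective, i.e.\ a monomorphism of $G$ into the wreath product $H\wr S_n$, in contrast to the permutation representation $G\to S_n$, whose kernel is the core $\bigcap_{x\in G}xHx^{-1}$ and is therefore in general nontrivial.

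Neither step looks like a serious obstacle; the only point demanding a bit of care is bookkeeping with the permuted indices $\lambda_y(j)$ in (i), and explicitly recording that the wreath product structure \eqref{eqn:MonRepr} truly is a group so that ``homomorphism'' makes sense. The injectivity argument in (ii) is essentially a one-line computation once an identity representative is picked inside the left transversal.
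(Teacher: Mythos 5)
Your proposal is correct and follows essentially the same route as the paper: the homomorphism property is exactly the identity \(u_{xy}(j)=u_x(\lambda_y(j))\cdot u_y(j)\) together with \(\lambda_{xy}=\lambda_x\circ\lambda_y\) already extracted from the proof of Theorem \ref{thm:Hom}, and injectivity is shown by the same kernel computation, where \(\lambda_x=\mathrm{id}\) forces \(u_x(i)=\ell_i^{-1}x\ell_i=1\) and hence \(x=1\) by undoing the conjugation. Your shortcut via a principal-coset representative \(\ell_{i_0}=1\) and your remark that the kernel of the permutation representation is the core of \(H\) are harmless cosmetic variations.
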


%--------------------------------------------------------------------------------

\begin{proof}
The homomorphism property has been shown above already.
For a homomorphism to be injective, it suffices to show the triviality of its kernel.
The neutral element of the group \(H^n\times S_n\) endowed with the wreath product
is given by \((1,\ldots,1;1)\), where the last \(1\) means the identity permutation.
If \((u_x(1),\ldots,u_x(n);\lambda_x)=(1,\ldots,1;1)\), for some \(x\in G\),
then \(\lambda_x=1\) and consequently \(1=u_x(i)=g_{\lambda_x(i)}^{-1}xg_i=g_i^{-1}xg_i\),
for all \(1\le i\le n\).
Finally, an application of the inverse inner automorphism with \(g_i\) yields \(x=1\), as required for injectivity.

The permutation representation
cannot be injective if \(G\) is infinite or at least of an order bigger than \(n!\),
the factorial of \(n\).
\end{proof}

%--------------------------------------------------------------------------------

\begin{remark}
\label{rmk:MonRepr}
Formula
(\ref{eqn:MonRepr})
is an example for
the \textit{left-sided variant} of the wreath product on \(H^n\times S_n\).
However, we point out that the wreath product
with respect to a right transversal \((r_1,\ldots,r_n)\) of \(H\) in \(G\)
appears in its \textit{right-sided variant}

\begin{equation}
\label{eqn:MonReprRight}
\begin{array}{cl}
(w_x(1),\ldots,w_x(n);\rho_x)\cdot (w_y(1),\ldots,w_y(n);\rho_y) &
:=(w_x(1)\cdot w_y(\rho_x(1)),\ldots,w_x(n)\cdot w_y(\rho_x(n));\rho_y\circ\rho_x) \\
 & =(w_{xy}(1),\ldots,w_{xy}(n);\rho_{xy}),
\end{array}
\end{equation}

\noindent
which implies that the permutation representation \(G\to S_n\), \(x\mapsto\rho_x\)
is a homomorphism with respect to the \textit{opposite} law of composition
\(\rho_{xy}=\rho_y\circ\rho_x\) on \(S_n\), in a \textit{contravariant} manner.

It can be shown that the left-sided and the right-sided variant of the
wreath product lead to isomorphic group structures on \(H^n\times S_n\).

%--------------------------------------------------------------------------------

A related viewpoint is taken by M. Hall
\cite[p.200]{Hl},
who uses the multiplication of \textit{monomial matrices} to describe the wreath product.
Such a matrix can be represented in the form
\(M_x=\mathrm{diag}(w_x(1),\ldots,w_x(n))\cdot P_{\rho_x}\)
as the product of an invertible diagonal matrix over the group ring \(K\lbrack H\rbrack\),
where \(K\) denotes a field,
and the permutation matrix \(P_{\rho_x}\)
associated with the permutation \(\rho_x\in S_n\).
Multiplying two such monomial matrices
yields a law of composition identical to the wreath product,
\(M_x\cdot M_y
=\mathrm{diag}(w_x(1),\ldots,w_x(n))\cdot P_{\rho_x}
\cdot\mathrm{diag}(w_y(1),\ldots,w_y(n))\cdot P_{\rho_y}\)\\
\(=\mathrm{diag}(w_x(1)\cdot w_y(\rho_x(1)),\ldots,w_x(n)\cdot w_y(\rho_x(n)))\cdot P_{\rho_x\circ\rho_y}\),
in the right-sided variant.

%--------------------------------------------------------------------------------

Whereas B. Huppert
\cite[p.413]{Hp}
uses the monomial representation for \textit{defining} the Artin transfer
by composition with the unsigned determinant,
we prefer to give the immediate Definition
\ref{dfn:ArtinTransfer}
and to merely \textit{illustrate} the homomorphism property of the Artin transfer
with the aid of the monomial representation.
\end{remark}

%\newpage

%--------------------------------------------------------------------------------

\subsection{Composition of Artin transfers}
\label{ss:Composition}

Let \(G\) be a group with nested subgroups \(K\le H\le G\) such that the indices
\((G:H)=n\), \((H:K)=m\) and \((G:K)=(G:H)\cdot (H:K)=n\cdot m\) are finite.

%--------------------------------------------------------------------------------

\begin{theorem}
\label{thm:Composition}
(See also
\cite[Thm.14.2.1, p.202]{Hl},
\cite[Satz 1.6, p.415]{Hp},
\cite[Lem.(17.3), p.61]{DkHw},
\cite[Thm.10.8, p.301]{Is},
\cite[Prop.3, p.3]{Ol}.)

Then the Artin transfer \(T_{G,K}\) is the compositum of
the \(\mathrm{induced transfer}\) \(\tilde{T}_{H,K}:\ H/H^\prime\to K/K^\prime\)
(in the sense of Corollary
\ref{cor:FctThrQtn}
or Corollary
\ref{cor:Abelianization}
in the appendix)
and the Artin transfer \(T_{G,H}\), i.e.,

\begin{equation}
\label{eqn:Composition}
T_{G,K} = \tilde{T}_{H,K}\circ T_{G,H}.
\end{equation}

\end{theorem}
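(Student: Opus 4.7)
The plan is to pick a particularly adapted left transversal of $K$ in $G$ and expand both sides of the equation on it, using that the Artin transfer is independent of the choice of transversal (Proposition~\ref{prp:LeftLeft}). Concretely, I would let $(\ell_1,\ldots,\ell_n)$ be a left transversal of $H$ in $G$ and $(h_1,\ldots,h_m)$ be a left transversal of $K$ in $H$. The double family $(\ell_i h_j)_{1\le i\le n,\,1\le j\le m}$ is then a left transversal of $K$ in $G$, since every element of $G$ lies in a unique $\ell_i H$ and, within that coset, every element of $H$ lies in a unique $h_j K$. This is the only transversal I want to work with.

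Next, for a fixed $x\in G$ I would compute the $nm$ monomials $v_x(i,j) := (\ell_i h_j)^{-1}$-shifted images of $x(\ell_i h_j)$ associated with this transversal. Writing $u_x(i) := \ell_{\lambda_x(i)}^{-1} x \ell_i \in H$ for the monomials attached to $(\ell_1,\ldots,\ell_n)$, and defining, for each $i$, the permutation $\mu_i \in S_m$ by $u_x(i) h_j K = h_{\mu_i(j)} K$, I get
\begin{equation*}
x\ell_i h_j = \ell_{\lambda_x(i)} u_x(i) h_j \in \ell_{\lambda_x(i)} h_{\mu_i(j)} K,
\end{equation*}
so that $v_x(i,j) = h_{\mu_i(j)}^{-1} u_x(i) h_j \in K$. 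This identifies the inner monomials $(v_x(i,j))_{j}$ as exactly those appearing in the Artin transfer $T_{H,K}$ applied to the element $u_x(i) \in H$ with respect to the transversal $(h_1,\ldots,h_m)$.

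Grouping the product over $(i,j)$ first over $j$ and then over $i$, and using that $K/K^\prime$ is abelian (so reordering is harmless), I obtain
\begin{equation*}
T_{G,K}(x) = \prod_{i=1}^n \prod_{j=1}^m v_x(i,j) \cdot K^\prime = \prod_{i=1}^n T_{H,K}(u_x(i)).
\end{equation*}
Since $T_{H,K}\colon H\to K/K^\prime$ is itself a homomorphism (Theorem~\ref{thm:Hom}), the inner product equals $T_{H,K}\bigl(\prod_{i=1}^n u_x(i)\bigr)$. Finally, because $K/K^\prime$ is abelian, $T_{H,K}$ factors through $H/H^\prime$ via the induced homomorphism $\tilde T_{H,K}$ of Corollary~\ref{cor:Abelianization}, giving
\begin{equation*}
T_{G,K}(x) = \tilde T_{H,K}\!\left(\prod_{i=1}^n u_x(i) \cdot H^\prime\right) = \tilde T_{H,K}(T_{G,H}(x)),
\end{equation*}
as required.

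The only delicate point is verifying that the $\mu_i$ are well-defined permutations and that $v_x(i,j) \in K$; both follow immediately from the observation that left multiplication by $u_x(i) \in H$ permutes the $K$-cosets of $H$. Everything else is bookkeeping justified by commutativity in $K/K^\prime$ and the transversal-independence of Artin transfers.
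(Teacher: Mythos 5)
Your proposal is correct and follows essentially the same route as the paper's proof: the same double transversal $(\ell_i h_j)$ of $K$ in $G$, the same identification $v_x(i,j)=h_{\mu_i(j)}^{-1}u_x(i)h_j$ (your $\mu_i$ is the paper's $\sigma_{y_i}$ with $y_i=u_x(i)$), and the same regrouping of the product followed by factoring $T_{H,K}$ through $H/H^\prime$. No gaps.
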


\noindent
This can be seen in the following manner.

%--------------------------------------------------------------------------------

\begin{proof}
If \((\ell_1,\ldots,\ell_n)\) is a left transversal of \(H\) in \(G\)
and \((h_1,\ldots,h_m)\) is a left transversal of \(K\) in \(H\),
that is \(G=\dot{\cup}_{i=1}^n\,\ell_iH\) and \(H=\dot{\cup}_{j=1}^m\,h_jK\),
then \(G=\dot{\cup}_{i=1}^n\,\dot{\cup}_{j=1}^m\,\ell_ih_jK\) is a
disjoint left coset decomposition of \(G\) with respect to \(K\).
(See also
\cite[Thm.1.5.3, p.12]{Hl},
\cite[Satz 2.6, p.6]{Hp}.)
Given two elements \(x\in G\) and \(y\in H\),
there exist unique permutations \(\lambda_x\in S_n\), and \(\sigma_y\in S_m\),
such that the associated monomials are given by

\begin{center}
\(u_x(i):=\ell_{\lambda_x(i)}^{-1}x\ell_i\in H\), for each \(1\le i\le n\), and
\(v_y(j):=h_{\sigma_y(j)}^{-1}yh_j\in K\), for each \(1\le j\le m\).
\end{center}

\noindent
Then, using Corollary
\ref{cor:Abelianization},
we have

\begin{center}
\(T_{G,H}(x)=\prod_{i=1}^n\,u_x(i)\cdot H^\prime\), \quad
and \quad \(\tilde{T}_{H,K}(y\cdot H^\prime)=T_{H,K}(y)=\prod_{j=1}^m\,v_y(j)\cdot K^\prime\).
\end{center}

\noindent
For each pair of subscripts \(1\le i\le n\) and \(1\le j\le m\), we put \(y_i:=u_x(i)\in H\) and obtain

\begin{align*}
x\ell_ih_j &= \ell_{\lambda_x(i)}\ell_{\lambda_x(i)}^{-1}x\ell_ih_j=\ell_{\lambda_x(i)}u_x(i)h_j
=\ell_{\lambda_x(i)}y_ih_j \\
        &= \ell_{\lambda_x(i)}h_{\sigma_{y_i}(j)}h_{\sigma_{y_i}(j)}^{-1}y_ih_j
=\ell_{\lambda_x(i)}h_{\sigma_{y_i}(j)}v_{y_i}(j),
\end{align*}

\noindent
resp. \(h_{\sigma_{y_i}(j)}^{-1}\ell_{\lambda_x(i)}^{-1}x\ell_ih_j=v_{y_i}(j)\).
Thus, the image of \(x\) under the Artin transfer \(T_{G,K}\) is given by

\begin{align*}
T_{G,K}(x) &= \prod_{i=1}^n\,\prod_{j=1}^m\,v_{y_i}(j)\cdot K^\prime
=\prod_{i=1}^n\,\prod_{j=1}^m\,h_{\sigma_{y_i}(j)}^{-1}\ell_{\lambda_x(i)}^{-1}x\ell_ih_j\cdot K^\prime \\
           &= \prod_{i=1}^n\,\prod_{j=1}^m\,h_{\sigma_{y_i}(j)}^{-1}u_x(i)h_j\cdot K^\prime
=\prod_{i=1}^n\,\prod_{j=1}^m\,h_{\sigma_{y_i}(j)}^{-1}y_ih_j\cdot K^\prime \\
           &= \prod_{i=1}^n\,\tilde{T}_{H,K}(y_i\cdot H^\prime)
=\tilde{T}_{H,K}(\prod_{i=1}^n\,y_i\cdot H^\prime)=\tilde{T}_{H,K}(\prod_{i=1}^n\,u_x(i)\cdot H^\prime) \\
           &= \tilde{T}_{H,K}(T_{G,H}(x)).
\end{align*}
\end{proof}

%\newpage

%--------------------------------------------------------------------------------

\subsection{Wreath product of \(S_m\) and \(S_n\)}
\label{ss:WreathProd}

Motivated by the proof of Theorem
\ref{thm:Composition},
we want to emphasize the structural peculiarity of the monomial representation
\[G\to K^{n\cdot m}\times S_{n\cdot m},\
x\mapsto (k_x(1,1),\ldots,k_x(n,m);\gamma_x),\]
which corresponds to the compositum of Artin transfers,
defining
\[k_x(i,j):=((\ell h)_{\gamma_x(i,j)})^{-1}x(\ell h)_{(i,j)}\in K\]
for a permutation
\(\gamma_x\in S_{n\cdot m}\),
and using the symbolic notation \((\ell h)_{(i,j)}:=\ell_ih_j\)
for all pairs of subscripts \(1\le i\le n\), \(1\le j\le m\).

%--------------------------------------------------------------------------------

The preceding proof has shown that
\(k_x(i,j)=h_{\sigma_{y_i}(j)}^{-1}\ell_{\lambda_x(i)}^{-1}x\ell_ih_j\).
Therefore, the action of the permutation \(\gamma_x\) on the set
\(\lbrack 1,n\rbrack\times\lbrack 1,m\rbrack\) is given by
\(\gamma_x(i,j)=(\lambda_x(i),\sigma_{u_x(i)}(j))\).
The action on the second component \(j\) depends on the first component \(i\)
(via the permutation \(\sigma_{u_x(i)}\in S_m\)),
whereas the action on the first component \(i\) is independent of the second component \(j\).
Therefore, the permutation \(\gamma_x\in S_{n\cdot m}\) can be identified with the multiplet
\((\lambda_x;\sigma_{u_x(1)},\ldots,\sigma_{u_x(n)})\in S_n\times S_m^n\),
which will be written in twisted form in the sequel.

The permutations \(\gamma_x\), which arise as second components of the monomial representation
\[G\to K\wr S_{n\cdot m},\
x\mapsto (k_x(1,1),\ldots,k_x(n,m);\gamma_x),\]
are of a very special kind.
They belong to the \textit{stabilizer} of the natural equipartition of the set
\(\lbrack 1,n\rbrack\times\lbrack 1,m\rbrack\)
into the \(n\) rows of the corresponding matrix (rectangular array).
Using the peculiarities of the composition of Artin transfers in the previous section,
we show that this stabilizer is isomorphic to the \textit{wreath product}
\(S_m\wr S_n\) of the symmetric groups \(S_m\) and \(S_n\) with respect to \(\lbrace 1,\ldots,n\rbrace\),
whose underlying set \(S_m^n\times S_n\)
is endowed with the following \textit{law of composition}
in the left-sided variant.

\begin{equation}
\label{eqn:SmWreathSn}
\begin{aligned}
\gamma_{x}\cdot\gamma_{z}
&= (\sigma_{u_x(1)},\ldots,\sigma_{u_x(n)};\lambda_x)\cdot (\sigma_{u_z(1)},\ldots,\sigma_{u_z(n)};\lambda_z) \\
&= (\sigma_{u_x(\lambda_z(1))}\circ\sigma_{u_z(1)},\ldots,\sigma_{u_x(\lambda_z(n))}\circ\sigma_{u_z(n)};\lambda_x\circ \lambda_z) \\
&= (\sigma_{u_{xz}(1)},\ldots,\sigma_{u_{xz}(n)};\lambda_{xz})
=\gamma_{xz}
\end{aligned}
\end{equation}

\noindent
for all \(x,z\in G\).

This law reminds of the \textit{chain rule}
\(D(g\circ f)(x)=D(g)(f(x))\circ D(f)(x)\)
for the \textit{Fr\'echet derivative} in \(x\in E\) of the compositum of \textit{differentiable functions}
\(f:\,E\to F\) and \(g:\,F\to G\) between \textit{complete normed spaces}.

%--------------------------------------------------------------------------------

The above considerations establish a third representation, the \textit{stabilizer representation},
\[G\to S_m\wr S_n,\ x\mapsto(\sigma_{u_x(1)},\ldots,\sigma_{u_x(n)};\lambda_x)\]
of the group \(G\) in the wreath product \(S_m\wr S_n\),
similar to the permutation representation and the monomial representation.
As opposed to the latter, the stabilizer representation cannot be injective, in general.
For instance, certainly not, if \(G\) is infinite.

Formula
(\ref{eqn:SmWreathSn})
proves the following statement.

\begin{theorem}
\label{thm:StabilizerRep}
The stabilizer representation
\(G\to S_m\wr S_n,\ x\mapsto\gamma_{x}=(\sigma_{u_x(1)},\ldots,\sigma_{u_x(n)};\lambda_x)\)
of the group \(G\) in the wreath product \(S_m\wr S_n\) of symmetric groups
is a group homomorphism.
\end{theorem}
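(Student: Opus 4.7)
The statement asserts $\gamma_{xz}=\gamma_x\cdot\gamma_z$ in $S_m\wr S_n$ for all $x,z\in G$, and equation~(\ref{eqn:SmWreathSn}) is essentially a display of the homomorphism property together with its computational backbone. My plan is therefore to verify the two component identities underlying~(\ref{eqn:SmWreathSn}), namely the $S_n$-part
\[\lambda_{xz}=\lambda_x\circ\lambda_z\]
and, for each $1\le i\le n$, the $S_m^n$-part
\[\sigma_{u_{xz}(i)}=\sigma_{u_x(\lambda_z(i))}\circ\sigma_{u_z(i)}.\]

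The first identity is immediate from Theorem~\ref{thm:Hom} applied to the pair $H\le G$ with its left transversal $(\ell_1,\ldots,\ell_n)$. For the second, I would proceed in two substeps. First, I would invoke the cocycle relation $u_{xz}(i)=u_x(\lambda_z(i))\cdot u_z(i)$, which is built into the wreath product law~(\ref{eqn:MonRepr}) and follows at once from the chain of equalities $xz\ell_i=x\bigl(\ell_{\lambda_z(i)}u_z(i)\bigr)=\ell_{\lambda_x(\lambda_z(i))}u_x(\lambda_z(i))u_z(i)=\ell_{\lambda_{xz}(i)}u_x(\lambda_z(i))u_z(i)$, which were already carried out in the proof of Theorem~\ref{thm:Hom}. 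Second, I would apply Theorem~\ref{thm:Hom} a \emph{second} time, now to the pair $K\le H$ with its left transversal $(h_1,\ldots,h_m)$: the assignment $y\mapsto\sigma_y$ is the permutation representation of $H$ on the left cosets of $K$, so $\sigma_{yy'}=\sigma_y\circ\sigma_{y'}$ for all $y,y'\in H$. Specializing $y:=u_x(\lambda_z(i))$ and $y':=u_z(i)$ and chaining with the cocycle identity yields the $S_m^n$-component.

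Assembling both components according to the left-sided wreath product law on $S_m^n\times S_n$ displayed in~(\ref{eqn:SmWreathSn}) then produces $\gamma_x\cdot\gamma_z=\gamma_{xz}$, whence the stabilizer representation is a homomorphism. The main obstacle I anticipate is purely one of bookkeeping: one must be careful that the order of the factors in the cocycle relation $u_{xz}(i)=u_x(\lambda_z(i))\cdot u_z(i)$ on the nonabelian group $H$ exactly matches the order $\sigma_{u_x(\lambda_z(i))}\circ\sigma_{u_z(i)}$ prescribed by the left-sided wreath product, and that the twisted index $\lambda_z(i)$ (not $i$) appears on the correct factor. Since both the cocycle law and the wreath product law have already been verified in the preceding sections, no new substantive computation beyond these two invocations of Theorem~\ref{thm:Hom} is needed.
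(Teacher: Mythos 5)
Your proposal is correct and takes essentially the same route as the paper: the paper's entire proof is the chain of equalities displayed in Formula (\ref{eqn:SmWreathSn}), whose justifications are exactly the three ingredients you supply — the left-sided wreath law on \(S_m^n\times S_n\), the identity \(\lambda_{xz}=\lambda_x\circ\lambda_z\) from Theorem \ref{thm:Hom}, and \(\sigma_{u_{xz}(i)}=\sigma_{u_x(\lambda_z(i))}\circ\sigma_{u_z(i)}\) obtained from the cocycle relation \(u_{xz}(i)=u_x(\lambda_z(i))\,u_z(i)\) together with a second application of Theorem \ref{thm:Hom} to the permutation representation \(y\mapsto\sigma_y\) of \(H\) on the cosets of \(K\). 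The only difference is that you make explicit the bookkeeping the paper leaves implicit in that formula.
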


%\newpage

%--------------------------------------------------------------------------------

\subsection{Cycle decomposition}
\label{ss:CycDec}

Let \((\ell_1,\ldots,\ell_n)\) be a left transversal of a subgroup \(H\le G\) of finite index \(n=(G:H)\ge 1\) in a group \(G\).
Suppose the element \(x\in G\) gives rise to the permutation \(\lambda_x\in S_n\) of the left cosets of \(H\) in \(G\) such that
\(x\ell_iH=\ell_{\lambda_x(i)}H\), resp. \(\ell_{\lambda_x(i)}^{-1}x\ell_i=:u_x(i)\in H\), for each \(1\le i\le n\).

%--------------------------------------------------------------------------------

\begin{theorem}
\label{lem:CycDec}
(See also
\cite[\S 2, p.50]{Ar2},
\cite[\S 27.4, p.170]{Ha},
\cite[Hilfssatz 1.7, p.415]{Hp},
\cite[Thm.3.3, p.249]{Gs},
\cite[(37.3), p.198]{Ab},
\cite[p.154]{SmTb},
\cite[Lem.5.5, p.153]{Is},
\cite[p.5]{Ol}.)

If the permutation \(\lambda_x\) has the decomposition \(\lambda_x=\prod_{j=1}^t\,\zeta_j\) into pairwise disjoint (and thus commuting) cycles \(\zeta_j\in S_n\) of lengths \(f_j\ge 1\),
which is unique up to the ordering of the cycles, more explicitly, if

\begin{equation}
\label{eqn:CycleAction}
(\ell_jH,\ell_{\zeta_j(j)}H,\ell_{\zeta_j^2(j)}H,\ldots,\ell_{\zeta_j^{f_j-1}(j)}H)=(\ell_jH,x\ell_jH,x^2\ell_jH,\ldots,x^{f_j-1}\ell_jH),
\end{equation}

\noindent
for \(1\le j\le t\), and \(\sum_{j=1}^t\,f_j=n\),
then the image of \(x\in G\) under the Artin transfer \(T_{G,H}\) is given by

\begin{equation}
\label{eqn:TransferCycleForm}
T_{G,H}(x)=\prod_{j=1}^t\,\ell_j^{-1}x^{f_j}\ell_j\cdot H^\prime.
\end{equation}

\end{theorem}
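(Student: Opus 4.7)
The plan is to exploit the independence of the Artin transfer from the choice of left transversal (Proposition \ref{prp:LeftLeft}) and choose a transversal that is adapted to the cycle decomposition of $\lambda_x$. Specifically, I would replace $(\ell_1,\ldots,\ell_n)$ by the transversal $(g_1,\ldots,g_n)$ defined by
\[
g_{\zeta_j^{k}(j)} \;:=\; x^{k}\ell_{j}\qquad\text{for }1\le j\le t,\ 0\le k\le f_j-1.
\]
This assignment is well-defined and hits every index exactly once, since the disjoint cycles $\zeta_1,\ldots,\zeta_t$ partition $\{1,\ldots,n\}$ into the orbits $\{j,\zeta_j(j),\ldots,\zeta_j^{f_j-1}(j)\}$ with $\sum_j f_j = n$. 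By the hypothesis \eqref{eqn:CycleAction}, $g_{\zeta_j^{k}(j)} H = x^{k}\ell_j H = \ell_{\zeta_j^{k}(j)} H$, so $(g_1,\ldots,g_n)$ is indeed a left transversal representing the same cosets in the same positions. In particular the associated permutation is unchanged, $\lambda_x^{(g)} = \lambda_x$.

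Next I would compute the monomials $u_x^{(g)}(i) = g_{\lambda_x(i)}^{-1}\,x\,g_i$ cycle by cycle. Fix a cycle index $j$ and write $i = \zeta_j^{k}(j)$, so that $\lambda_x(i) = \zeta_j^{k+1}(j)$. For $0\le k\le f_j-2$, both $g_i = x^{k}\ell_j$ and $g_{\lambda_x(i)} = x^{k+1}\ell_j$ are direct powers, giving
\[
u_x^{(g)}(i) \;=\; (x^{k+1}\ell_j)^{-1}\,x\,(x^{k}\ell_j) \;=\; \ell_j^{-1}\ell_j \;=\; 1.
\]
For $k = f_j - 1$, we have $\lambda_x(i) = \zeta_j^{f_j}(j) = j$, so $g_{\lambda_x(i)} = \ell_j$ and $g_i = x^{f_j-1}\ell_j$, yielding
\[
u_x^{(g)}(i) \;=\; \ell_j^{-1}\,x\,(x^{f_j-1}\ell_j) \;=\; \ell_j^{-1}\,x^{f_j}\,\ell_j.
\]
Thus within each cycle exactly one monomial is non-trivial.

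Assembling the product, only the $t$ contributions coming from the ends of the cycles survive, so
\[
T_{G,H}(x) \;=\; T_{G,H}^{(g)}(x) \;=\; \prod_{i=1}^{n} u_x^{(g)}(i)\cdot H^{\prime} \;=\; \prod_{j=1}^{t} \ell_j^{-1}\,x^{f_j}\,\ell_j\cdot H^{\prime},
\]
which is the claimed formula \eqref{eqn:TransferCycleForm}.

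The only delicate point is justifying the change of transversal and the consequent equality $\lambda_x^{(g)} = \lambda_x$: one must check that even though $g_i \ne \ell_i$ in general, the permutation that records the action of $x$ on cosets depends only on the coset labels, not on the chosen representatives, so Proposition \ref{prp:LeftLeft} applies verbatim. Everything else is a mechanical telescoping within each cycle.
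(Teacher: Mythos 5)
Your proposal is correct and follows essentially the same route as the paper: the paper likewise replaces the given transversal by the cycle-adapted one \(g_{j,k}=x^k\ell_j\), observes that within each cycle all monomials are trivial except the last, which equals \(\ell_j^{-1}x^{f_j}\ell_j\), and then invokes independence of the transversal. Your additional remark that the new representatives label the same cosets at the same positions, so that \(\lambda_x\) is unchanged, is a valid and slightly more explicit justification of the step the paper leaves implicit.
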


%--------------------------------------------------------------------------------

\begin{proof}
The reason for this fact is that we obtain another left transversal of \(H\) in \(G\) by putting
\(g_{j,k}:=x^k\ell_j\) for \(0\le k\le f_j-1\) and \(1\le j\le t\), since

\begin{equation}
\label{eqn:LeftTransvOfCycles}
G=\dot{\cup}_{j=1}^t\,\dot{\cup}_{k=0}^{f_j-1}\,x^k\ell_jH
\end{equation}

\noindent
is a disjoint decomposition of \(G\) into left cosets of \(H\).

Let us fix a value of \(1\le j\le t\).
For \(0\le k\le f_j-2\), we have
\[xg_{j,k}=xx^k\ell_j=x^{k+1}\ell_j=g_{j,k+1}\in g_{j,k+1}H,
\text{ resp. } u_x(j,k):=g_{j,k+1}^{-1}xg_{j,k}=1\in H.\]
However, for \(k=f_j-1\), we obtain
\[xg_{j,f_j-1}=xx^{f_j-1}\ell_j=x^{f_j}\ell_j\in\ell_jH=g_{j,0}H,
\text{ resp. } u_x(j,f_j-1):=g_{j,0}^{-1}xg_{j,f_j-1}=\ell_j^{-1}x^{f_j}\ell_j\in H.\]
Consequently,
\[T_{G,H}(x)=\prod_{j=1}^t\,\prod_{k=0}^{f_j-1}\,u_x(j,k)\cdot H^\prime
=\prod_{j=1}^t\,(\prod_{k=0}^{f_j-2}\,1)\cdot u_x(j,f_j-1)\cdot H^\prime
=\prod_{j=1}^t\,\ell_j^{-1}x^{f_j}\ell_j\cdot H^\prime.\]
\end{proof}

%--------------------------------------------------------------------------------

The cycle decomposition corresponds to a
double coset decomposition \(G=\dot{\cup}_{j=1}^t\,\langle x\rangle \ell_jH\) of the group \(G\)
modulo the cyclic group \(\langle x\rangle\) and modulo the subgroup \(H\).
It was actually this cycle decomposition form of the transfer homomorphism
which was given by E. Artin in his original 1929 paper
\cite[\S 2, p.50]{Ar2}.

%\newpage

%--------------------------------------------------------------------------------

\subsection{Transfer to a normal subgroup}
\label{ss:NrmSbg}

Now let \(H\unlhd G\) be a \textit{normal} subgroup of finite index \(n=(G:H)\ge 1\) in a group \(G\).
Then we have \(xH=Hx\), for all \(x\in G\), and there exists the quotient group \(G/H\) of order \(n\).
For an element \(x\in G\), we let \(f:=\mathrm{ord}(xH)\) denote the order of the coset \(xH\) in \(G/H\),
and we let \((\ell_1,\ldots,\ell_t)\) be a left transversal of the subgroup \(\langle x,H\rangle\) in \(G\),
where \(t=n/f\).

\begin{theorem}
\label{thm:NrmSbg}
(See also \cite[\S 27.4, VII, p.171]{Ha}.)

Then the image of \(x\in G\) under the Artin transfer \(T_{G,H}\) is given by

\begin{equation}
\label{eqn:TransferToNormalSbg}
T_{G,H}(x)=\prod_{j=1}^t\,\ell_j^{-1}x^f\ell_j\cdot H^\prime.
\end{equation}

\end{theorem}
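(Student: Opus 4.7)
The plan is to deduce this from the cycle decomposition theorem (Theorem~\ref{lem:CycDec}) by exploiting the uniform cycle structure of $\lambda_x$ forced by normality of $H$.

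First I would determine the length of every cycle of $\lambda_x$. The stabilizer in $\langle x\rangle$ of a coset $gH$ under left multiplication is $\langle x\rangle\cap gHg^{-1}$, and by normality this collapses to $\langle x\rangle\cap H=\langle x^f\rangle$, independent of $g$. Orbit--stabilizer therefore yields orbit length $[\langle x\rangle:\langle x^f\rangle]=f$ for every orbit of $\langle x\rangle$ acting on $G/H$; equivalently, $\lambda_x$ decomposes into $t=n/f$ disjoint cycles of common length $f_j=f$.

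Next I would identify the $\langle x\rangle$-orbits with cosets of $\langle x,H\rangle$. Using $Hg=gH$, one computes $\langle x\rangle gH=\langle x\rangle Hg=\langle x,H\rangle g$, so the orbit through $gH$ is exactly the image in $G/H$ of the coset containing $g$. This sets up a bijection between the $t$ orbits and the $t$ cosets of $\langle x,H\rangle$ in $G$; the given $(\ell_1,\ldots,\ell_t)$ --- after passing to inverses via Lemma~\ref{lem:Transversals}(1) if one needs right cosets rather than left --- furnishes one representative $\ell_j H$ per orbit.

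Finally I would invoke formula~(\ref{eqn:TransferCycleForm}): the contribution from the cycle with representative $\ell_j$ and length $f_j=f$ is $\ell_j^{-1}x^f\ell_j\cdot H^\prime$, and multiplying over $j$ gives the stated $T_{G,H}(x)=\prod_{j=1}^t\ell_j^{-1}x^f\ell_j\cdot H^\prime$. The main obstacle I anticipate is the orbit--transversal alignment in the middle step: since $\langle x,H\rangle$ need not be normal in $G$ (its normality would be equivalent to $\langle xH\rangle\unlhd G/H$), left and right transversals of $\langle x,H\rangle$ generally differ, and care must be taken that the chosen $\ell_j$ parameterize distinct $\langle x\rangle$-orbits rather than collapsing two together.
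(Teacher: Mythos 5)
Your strategy is the paper's own: reduce everything to the cycle decomposition of Theorem~\ref{lem:CycDec}, formula~(\ref{eqn:TransferCycleForm}). Your first two steps are correct, and in fact cleaner than the paper's refinement argument: normality gives the stabilizer \(\langle x\rangle\cap gHg^{-1}=\langle x\rangle\cap H=\langle x^{f}\rangle\), so every \(\langle x\rangle\)-orbit on \(G/H\) has length \(f\) and there are \(t=n/f\) of them, and \(\langle x\rangle gH=\langle x\rangle Hg=\langle x,H\rangle g\) identifies the orbits with the \emph{right} cosets of \(K:=\langle x,H\rangle\).

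The ``obstacle'' you flag at the end, however, is a genuine gap, and the remedy you sketch does not close it. If you invert the left transversal, the orbit representatives become the cosets \(\ell_j^{-1}H\), and (\ref{eqn:TransferCycleForm}) then yields \(T_{G,H}(x)=\prod_{j=1}^{t}\ell_j x^{f}\ell_j^{-1}\cdot H^{\prime}\), which is not the right-hand side of (\ref{eqn:TransferToNormalSbg}). The discrepancy is not cosmetic: each factor \(\ell^{-1}x^{f}\ell\cdot H^{\prime}\) depends only on the right coset \(K\ell\) (because \(x^{f}\) commutes with \(x\) and, modulo \(H^{\prime}\), with \(H\)), so a left transversal that meets one right coset twice and misses another produces a wrong product. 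Concretely, take \(G=S_4\), \(H=V_4\), \(x=(1234)\), so \(f=2\), \(x^{2}=(13)(24)\), \(t=3\), \(H^{\prime}=1\), and \(K\) is a non-normal Sylow \(2\)-subgroup; since conjugation induces the natural action of \(G/V_4\simeq S_3\) on the three involutions of \(V_4\), right-coset representatives conjugate \(x^{2}\) to the three distinct involutions and \(T_{G,H}(x)=1\); but because \(K\) is not normal one can pick a left transversal \((1,\ell_2,\ell_3)\) of \(K\) with \(\ell_2,\ell_3\) in the same right coset, whence the last two factors coincide, are involutions, and the product collapses to \(x^{2}\neq 1\). Thus to arrive at (\ref{eqn:TransferToNormalSbg}) exactly as printed you must assume that the \(\ell_j\) represent the \(t\) distinct right cosets \(K\ell_j\) (equivalently the double cosets \(\langle x\rangle\ell_jH\), i.e.\ the \(\langle x\rangle\)-orbits); this holds for every left transversal precisely when \(K\unlhd G\), for instance when \(G/H\) is abelian, which covers all later uses of the theorem (Corollary~\ref{cor:NrmSbg} and \S\ref{s:CompImpl}). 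With that proviso your argument goes through verbatim; note that the paper's own proof makes the same silent identification when it treats the blocks \(\ell_j x^{k}H\), \(0\le k\le f-1\), of the refined transversal as the cycles of \(\lambda_x\), which is only true when \(x\ell_jK=\ell_jK\) for all \(j\).
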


\begin{proof}
\(\langle xH\rangle\) is a cyclic subgroup of order \(f\) in \(G/H\),
and a left transversal \((\ell_1,\ldots,\ell_t)\) of the subgroup \(\langle x,H\rangle\) in \(G\),
where \(t=n/f\) and
\(G=\dot{\cup}_{j=1}^t\,\ell_j\langle x,H\rangle\) is the corresponding disjoint left coset decomposition,
can be refined to a left transversal
\(\ell_jx^k\) \((1\le j\le t,\ 0\le k\le f-1)\)
with  disjoint left coset decomposition

\begin{equation}
\label{eqn:LeftTransvOfNormalSbg}
G=\dot{\cup}_{j=1}^t\,\dot{\cup}_{k=0}^{f-1}\,\ell_jx^kH
\end{equation}

\noindent
of \(H\) in \(G\).
Hence, the formula for the image of \(x\) under the Artin transfer \(T_{G,H}\) in the previous section
takes the particular shape
\[T_{G,H}(x)=\prod_{j=1}^t\,\ell_j^{-1}x^f\ell_j\cdot H^\prime\]
with exponent \(f\) independent of \(j\).
\end{proof}

%--------------------------------------------------------------------------------

\begin{corollary}
\label{cor:NrmSbg}
(See also
\cite[Lem.10.6, p.300]{Is}
for a special case.)

In particular,
the \(\mathrm{inner\ transfer}\) of an element \(x\in H\) is given as a symbolic power

\begin{equation}
\label{eqn:InnerTransfer}
T_{G,H}(x)=x^{\mathrm{Tr}_G(H)}\cdot H^\prime
\end{equation}

\noindent
with the \(\mathrm{trace\ element}\)

\begin{equation}
\label{eqn:TraceElement}
\mathrm{Tr}_G(H)=\sum_{j=1}^t\,\ell_j\in\mathbb{Z}\lbrack G\rbrack
\end{equation}

\noindent
of \(H\) in \(G\) as symbolic exponent.\\
The other extreme is the \(\mathrm{outer\ transfer}\) of an element \(x\in G\setminus H\) which generates \(G\) modulo \(H\),
that is \(G=\langle x,H\rangle\).
It is simply an \(n\)th power

\begin{equation}
\label{eqn:OuterTransfer}
T_{G,H}(x)=x^n\cdot H^\prime.
\end{equation}

\end{corollary}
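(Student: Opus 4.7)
The plan is to derive both statements as immediate specializations of Theorem \ref{thm:NrmSbg}, distinguished by the value of \(f=\mathrm{ord}(xH)\) in \(G/H\). First, for \(x\in H\) the coset \(xH\) is trivial, so \(f=1\) and \(t=n/f=n\); moreover \(\langle x,H\rangle=H\), which means that \((\ell_1,\ldots,\ell_n)\) is a left transversal of \(H\) itself in \(G\). Substituting \(f=1\) into \eqref{eqn:TransferToNormalSbg} yields
\begin{equation*}
T_{G,H}(x)=\prod_{j=1}^{n}\ell_j^{-1}x\ell_j\cdot H^\prime.
\end{equation*}

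To recognize the right-hand side as a symbolic power, I would note that since \(H\unlhd G\), conjugation \((g,h)\mapsto g^{-1}hg\) defines a right action of \(G\) on \(H\), which descends to a well-defined right action on the abelian group \(H/H^\prime\) and extends \(\mathbb{Z}\)-linearly to a right \(\mathbb{Z}\lbrack G\rbrack\)-module structure there. Commutativity of \(H/H^\prime\) renders the factor-order in the above product irrelevant modulo \(H^\prime\), so under this symbolic exponentiation the product coincides with \(x^{\sum_{j=1}^n\ell_j}\cdot H^\prime=x^{\mathrm{Tr}_G(H)}\cdot H^\prime\), which is \eqref{eqn:InnerTransfer}. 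For the outer transfer, the hypothesis \(G=\langle x,H\rangle\) forces the transversal of \(\langle x,H\rangle=G\) in \(G\) to consist of a single element, i.e., \(t=1\) and consequently \(f=n\). By Remark \ref{rmk:Transversals} one may take \(\ell_1=1\), whereupon \eqref{eqn:TransferToNormalSbg} collapses to
\begin{equation*}
T_{G,H}(x)=\ell_1^{-1}x^n\ell_1\cdot H^\prime=x^n\cdot H^\prime,
\end{equation*}
as asserted in \eqref{eqn:OuterTransfer}.

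There is really no hard step once Theorem \ref{thm:NrmSbg} is in hand; both formulas are obtained by reading off the displayed product at the appropriate extremal value of \(f\). The only subtlety worth flagging is the interpretation of \(x^{\mathrm{Tr}_G(H)}\) as a symbolic power, which rests on two standing facts: that \(H\) is normal in \(G\) (so each conjugate \(\ell_j^{-1}x\ell_j\) lies in \(H\)) and that \(H/H^\prime\) is abelian (so the multiplicative formula is independent of the ordering of factors and is compatible with the right \(\mathbb{Z}\lbrack G\rbrack\)-module action just described).
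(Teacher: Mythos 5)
Your proposal is correct and follows essentially the same route as the paper: both formulas are obtained by specializing Theorem \ref{thm:NrmSbg} at the extremal values \(f=1\) (so \(t=n\), transversal of \(H\) itself) and \(f=n\) (so \(t=1\), with \(\ell_1=1\)). Your extra remark justifying the symbolic-power notation via the conjugation action of \(G\) on \(H/H^\prime\) is a harmless elaboration of what the paper leaves implicit in writing \(x^{\ell_j}\) and \(x^{\sum_j \ell_j}\).
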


%--------------------------------------------------------------------------------

\begin{proof}
The inner transfer of an element \(x\in H\),
whose coset \(xH=H\) is the principal set in \(G/H\) of order \(f=1\),
is given as the symbolic power

\begin{center}
\(T_{G,H}(x)=\prod_{j=1}^t\,\ell_j^{-1}x\ell_j\cdot H^\prime=\prod_{j=1}^t\,x^{\ell_j}\cdot H^\prime
=x^{\sum_{j=1}^t\,\ell_j}\cdot H^\prime\)
\end{center}

\noindent
with the \(\mathrm{trace\ element}\)

\begin{center}
\(\mathrm{Tr}_G(H)=\sum_{j=1}^t\,\ell_j\in\mathbb{Z}\lbrack G\rbrack\)
\end{center}

\noindent
of \(H\) in \(G\) as symbolic exponent.\\
The outer transfer of an element \(x\in G\setminus H\) which generates \(G\) modulo \(H\),
that is \(G=\langle x,H\rangle\),
whose coset \(xH\) is generator of \(G/H\) with order \(f=n\),
is given as the \(n\)th power

\begin{center}
\(T_{G,H}(x)=\prod_{j=1}^1\,1^{-1}\cdot x^n\cdot 1\cdot H^\prime=x^n\cdot H^\prime\).
\end{center}
\end{proof}

%--------------------------------------------------------------------------------

Transfers to normal subgroups will be the most important cases in the sequel,
since the central concept of this article, the \textit{Artin pattern},
which endows descendant trees with additional structure,
consists of targets and kernels (\S\
\ref{s:TransferTargetKernel})
of Artin transfers from a group \(G\) to intermediate groups \(G^\prime\le H\le G\)
between \(G\) and its commutator subgroup \(G^\prime\).
For these intermediate groups we have the following lemma.

\begin{lemma}
\label{lem:HeadSbg}
All subgroups \(H\le G\) of a group \(G\) which contain the commutator subgroup \(G^\prime\)
are normal subgroups \(H\unlhd G\).
\end{lemma}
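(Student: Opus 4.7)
The plan is to verify normality directly via the commutator identity
\[
ghg^{-1} = (ghg^{-1}h^{-1})\cdot h = \lbrack g,h\rbrack\cdot h,
\]
which rewrites conjugation as a product of a commutator and the original element. Under the hypothesis $G^\prime\le H$, every commutator $\lbrack g,h\rbrack$ lies in $H$ since $\lbrack g,h\rbrack\in G^\prime$. Consequently $ghg^{-1}$ is the product of two elements of $H$ and therefore belongs to $H$. Since $g\in G$ and $h\in H$ were arbitrary, this yields $gHg^{-1}\subseteq H$ for every $g\in G$, which is the defining condition of a normal subgroup $H\unlhd G$.

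An equally short alternative route, which I would mention as a remark rather than repeat in detail, is to pass to the abelianization $G/G^\prime$ via the canonical epimorphism $\pi:\,G\twoheadrightarrow G/G^\prime$. Since $G/G^\prime$ is abelian, every subgroup of $G/G^\prime$ is normal; in particular the image $\pi(H)=H/G^\prime$ is normal in $G/G^\prime$. By the correspondence theorem, the preimage $\pi^{-1}(\pi(H))=H$ (the equality uses $\ker(\pi)=G^\prime\le H$) is then normal in $G$.

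Neither argument presents any real obstacle: the first relies only on the elementary commutator identity and the containment $G^\prime\le H$, while the second relies on the fact that subgroups of abelian groups are normal together with the standard behaviour of preimages under a surjective homomorphism whose kernel is contained in $H$. I would favour the first, direct argument because it exhibits the mechanism explicitly and does not require invoking the correspondence theorem, keeping the lemma self-contained at the level of elementary group theory that the rest of this section presupposes.
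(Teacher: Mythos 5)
Your direct argument is correct and rests on the same mechanism as the paper's proof: conjugation of $h\in H$ by $g\in G$ differs from $h$ by a commutator, which lies in $G^\prime\le H$; the paper merely phrases this contrapositively (assuming $x^{-1}hx\notin H$ and deriving a commutator outside $H$), so the two are essentially the same argument. Your alternative via the abelianization and the correspondence theorem is also valid but, as you note, unnecessary here.
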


\begin{proof}
Let \(G^\prime\le H\le G\).
If \(H\) were not a normal subgroup of \(G\),
then we had \(x^{-1}Hx\not\subseteq H\) for some element \(x\in G\setminus H\).
This would imply the existence of elements \(h\in H\) and \(y\in G\setminus H\)
such that \(x^{-1}hx=y\), and consequently the commutator
\(\lbrack h,x\rbrack=h^{-1}x^{-1}hx=h^{-1}y\) would be an element in \(G\setminus H\)
in contradiction to \(G^\prime\le H\).
\end{proof}

%--------------------------------------------------------------------------------

Explicit implementations of Artin transfers in the simplest situations are presented in the following section.

%\newpage

%--------------------------------------------------------------------------------

\normalsize

\section{Computational implementation}
\label{s:CompImpl}

\subsection{Abelianization of type \((p,p)\)}
\label{ss:TypePePe}

Let \(G\) be a pro-\(p\) group with abelianization \(G/G^\prime\) of elementary abelian type \((p,p)\).
Then \(G\) has \(p+1\) maximal subgroups \(H_i<G\) \((1\le i\le p+1)\) of index \((G:H_i)=p\).
In this particular case,
the Frattini subgroup \(\Phi(G):=\bigcap_{i=1}^{p+1}\,H_i\),
which is defined as the intersection of all maximal subgroups,
coincides with the commutator subgroup \(G^\prime=\lbrack G,G\rbrack\),
since the latter contains all \(p\)th powers \(G^\prime\ge G^p\),
and thus we have \(\Phi(G)=G^p\cdot G^\prime=G^\prime\).

For each \(1\le i\le p+1\),
let \(T_i:\,G\to H_i/H_i^\prime\) be the Artin transfer homomorphism from \(G\) to the abelianization of \(H_i\).
According to Burnside's basis theorem, the group \(G\) has generator rank \(d(G)=2\)
and can therefore be generated as \(G=\langle x,y\rangle\) by two elements \(x,y\) such that \(x^p,y^p\in G^\prime\).
For each of the normal subgroups \(H_i\lhd G\), we need
a generator \(h_i\) with respect to \(G^\prime\),
and a generator \(t_i\) of a transversal \((1,t_i,t_i^2,\ldots,t_i^{p-1})\) such that
\(H_i=\langle h_i,G^\prime\rangle\) and \(G=\langle t_i,H_i\rangle=\dot{\bigcup}_{j=0}^{p-1}\,t_i^jH_i\).

A convenient selection is given by

\begin{equation}
\label{eqn:GenAndTrvPxP}
h_1=y,\ t_1=x, \text{ and } h_i=xy^{i-2},\ t_i=y, \text{ for all } 2\le i\le p+1.
\end{equation}

\noindent
Then, for each \(1\le i\le p+1\), it is possible to implement the \textit{inner transfer} by

\begin{equation}
\label{eqn:InnerATPxP}
T_i(h_i)=h_i^{\mathrm{Tr}_G(H_i)}\cdot H_i^\prime=h_i^{1+t_i+t_i^2+\ldots +t_i^{p-1}}\cdot H_i^\prime,
\end{equation}

\noindent
according to equation
(\ref{eqn:InnerTransfer})
of Corollary
\ref{cor:NrmSbg},
which can also be expressed by a product of two \(p\)th powers,

\begin{equation}
\label{eqn:InnerATVarPxP}
T_i(h_i)
=h_i\cdot t_i^{-1}h_it_i\cdot t_i^{-2}h_it_i^2\cdots t_i^{-p+1}h_it_i^{p-1}\cdot H_i^\prime
=(h_it_i^{-1})^pt_i^p\cdot H_i^\prime,
\end{equation}

\noindent
and to implement the \textit{outer transfer} as a complete \(p\)th power by

\begin{equation}
\label{eqn:OuterATPxP}
T_i(t_i)=t_i^p\cdot H_i^\prime,
\end{equation}

\noindent
according to equation
(\ref{eqn:OuterTransfer})
of Corollary
\ref{cor:NrmSbg}.
The reason is that \(\mathrm{ord}(h_iH_i)=1\) and \(\mathrm{ord}(t_iH_i)=p\) in the quotient group \(G/H_i\).

It should be pointed out that the complete specification of the Artin transfers \(T_i\)
also requires explicit knowledge of the derived subgroups \(H_i^\prime\).
Since \(G^\prime\) is a normal subgroup of index \(p\) in \(H_i\),
a certain general reduction is possible by
\(H_i^\prime=\lbrack H_i,H_i\rbrack=\lbrack G^\prime,H_i\rbrack=(G^\prime)^{h_i-1}\)
\cite[Lem.2.1, p.52]{Bl},
but an explicit pro-\(p\) presentation of \(G\) must be known
for determining generators of \(G^\prime=\langle s_1,\ldots,s_n\rangle\),
whence

\begin{equation}
\label{eqn:DrvSbgOfMaxSbg}
H_i^\prime=(G^\prime)^{h_i-1}=\langle\lbrack s_1,h_i\rbrack,\ldots,\lbrack s_n,h_i\rbrack\rangle.
\end{equation}

%\newpage

%--------------------------------------------------------------------------------

\begin{figure}[ht]
\caption{Layers of subgroups \(G^\prime\le H_{i,j}\le G\) for \(G/G^\prime=\langle x,y,G^\prime\rangle\simeq(p^2,p)\)}
\label{fig:LayersTypeP2xP}

\setlength{\unitlength}{1.0cm}
\begin{picture}(15,10)(-4,1)

% scale of orders
\put(-4,10.3){\makebox(0,0)[cb]{Order \(p^n\)}}
\put(-4,8){\vector(0,1){2}}
\put(-3.8,8){\makebox(0,0)[lc]{\(p^3\)}}
\put(-3.8,6){\makebox(0,0)[lc]{\(p^2\)}}
\put(-3.8,4){\makebox(0,0)[lc]{\(p\)}}
\put(-3.8,2){\makebox(0,0)[lc]{\(1\)}}
\multiput(-4.1,2)(0,2){4}{\line(1,0){0.2}}
\put(-4,2){\line(0,1){6}}

\put(-2.5,8){\makebox(0,0)[lb]{Layer \(0\)}}
\put(-2.5,8){\makebox(0,0)[lt]{(Top)}}
\put(6.5,8){\makebox(0,0)[lb]{\(\mathrm{Lyr}_0(G)=\lbrace H_{0,1}\rbrace\)}}
\put(1.8,8){\makebox(0,0)[rb]{\(G=H_{0,1}\)}}
\put(2,8){\circle*{0.1}}
\multiput(2,8)(1.5,-2){2}{\line(-3,-4){1.5}}
\put(2,8){\line(1,-4){0.5}}
\put(2.5,6){\line(-1,-4){0.5}}
\multiput(2,8)(-1.5,-2){2}{\line(3,-4){1.5}}

\put(0.3,6){\makebox(0,0)[rb]{\(H_{1,1}\)}}
\put(0.3,5.8){\makebox(0,0)[rt]{\(x\)}}
\put(1.5,6){\makebox(0,0)[cb]{\(\ldots\)}}
\multiput(0.5,6)(2,0){2}{\circle*{0.1}}
\put(3.5,6){\circle*{0.2}}
\put(2.6,6){\makebox(0,0)[lb]{\(H_{1,p}\)}}
\put(3.7,6){\makebox(0,0)[lb]{\(H_{1,p+1}=\tilde{\Phi}\)}}
\put(-2.5,6){\makebox(0,0)[lb]{Layer \(1\)}}
\put(6.5,6){\makebox(0,0)[lb]{\(\mathrm{Lyr}_1(G)=\lbrace H_{1,1},\ldots,H_{1,p+1}\rbrace\)}}
\put(6.5,6){\makebox(0,0)[lt]{Maximal subgroups}}

\put(1.8,4){\makebox(0,0)[rb]{\(\Phi=H_{2,p+1}\)}}
\put(1.8,4){\makebox(0,0)[rt]{Frattini subgroup}}
\put(2.9,4){\makebox(0,0)[rb]{\(H_{2,p}\)}}
\put(2,4){\circle*{0.2}}
\multiput(3,4)(2,0){2}{\circle*{0.1}}
\put(4,4){\makebox(0,0)[cb]{\(\ldots\)}}
\put(5.2,4){\makebox(0,0)[lb]{\(H_{2,1}\)}}
\put(5.2,3.8){\makebox(0,0)[lt]{\(y\)}}
\put(-2.5,4){\makebox(0,0)[lb]{Layer \(2\)}}
\put(6.5,4){\makebox(0,0)[lb]{\(\mathrm{Lyr}_2(G)=\lbrace H_{2,1},\ldots,H_{2,p+1}\rbrace\)}}

\multiput(3.5,6)(1.5,-2){2}{\line(-3,-4){1.5}}
\put(3.5,6){\line(-1,-4){0.5}}
\put(3,4){\line(1,-4){0.5}}
\multiput(3.5,6)(-1.5,-2){2}{\line(3,-4){1.5}}
\put(3.5,2){\circle*{0.1}}
\put(3.7,2){\makebox(0,0)[lt]{\(H_{3,1}=G^\prime\)}}
\put(-2.5,2){\makebox(0,0)[lb]{Layer \(3\)}}
\put(-2.5,2){\makebox(0,0)[lt]{(Bottom)}}
\put(6.5,2){\makebox(0,0)[lb]{\(\mathrm{Lyr}_3(G)=\lbrace H_{3,1}\rbrace\)}}
\put(6.5,2){\makebox(0,0)[lt]{Commutator subgroup}}

\end{picture}

\end{figure}
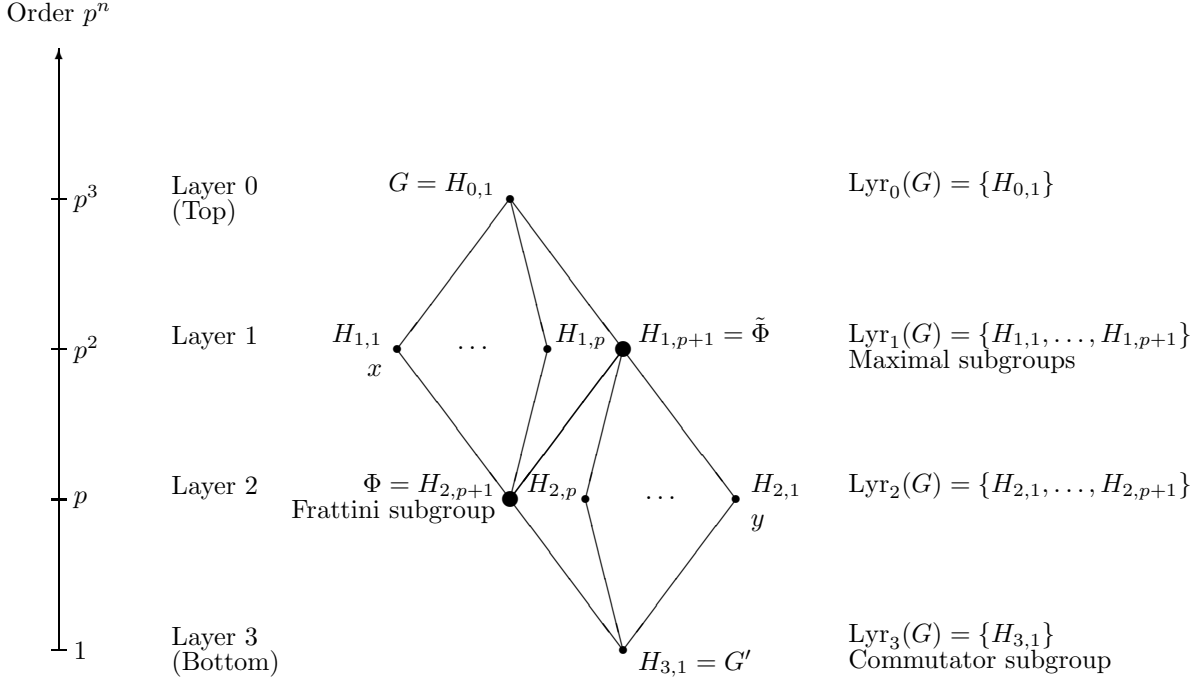

%\newpage

%--------------------------------------------------------------------------------

\subsection{Abelianization of type \((p^2,p)\)}
\label{ss:TypePe2Pe}

Let \(G\) be a pro-\(p\) group with abelianization \(G/G^\prime\) of non-elementary abelian type \((p^2,p)\).
Then \(G\) has \(p+1\) maximal subgroups \(H_{1,i}<G\) \((1\le i\le p+1)\) of index \((G:H_{1,i})=p\)
and \(p+1\) subgroups \(H_{2,i}<G\) \((1\le i\le p+1)\) of index \((G:H_{2,i})=p^2\).

Figure
\ref{fig:LayersTypeP2xP}
visualizes this smallest non-trivial example of a \textit{multi-layered} abelianization \(G/G^\prime\)
\cite[Dfn.3.1--3, p.288]{Ma6}.

\noindent
For each \(1\le i\le p+1\), let \(T_{1,i}:\,G\to H_{1,i}/H_{1,i}^\prime\), resp. \(T_{2,i}:\,G\to H_{2,i}/H_{2,i}^\prime\),
be the Artin transfer homomorphism from \(G\) to the abelianization of \(H_{1,i}\), resp. \(H_{2,i}\).
Burnside's basis theorem asserts that the group \(G\) has generator rank \(d(G)=2\)
and can therefore be generated as \(G=\langle x,y\rangle\) by two elements \(x,y\)
such that \(x^{p^2},y^p\in G^\prime\).

We begin by considering the \textit{first layer} of subgroups.
For each of the normal subgroups \(H_{1,i}\triangleleft G\) \((1\le i\le p)\),
we select a generator

\begin{equation}
\label{eqn:GenLyr1P2xP}
h_i=xy^{i-1} \text{ such that } H_{1,i}=\langle h_i,G^\prime\rangle.
\end{equation}

\noindent
These are the cases where the factor group \(H_{1,i}/G^\prime\) is cyclic of order \(p^2\).
However, for the \textit{distinguished maximal subgroup} \(H_{1,p+1}\),
for which the factor group \(H_{1,p+1}/G^\prime\) is bicyclic of type \((p,p)\),
we need two generators

\begin{equation}
\label{eqn:GenLyr1VarP2xP}
h_{p+1}=y,\ h_0=x^p\text{ such that } H_{1,p+1}=\langle h_{p+1},h_0,G^\prime\rangle.
\end{equation}

\noindent
Further, a generator \(t_i\) of a transversal must be given
such that \(G=\langle t_i,H_{1,i}\rangle\), for each \(1\le i\le p+1\).
It is convenient to define

\begin{equation}
\label{eqn:TrvLyr1P2xP}
t_i=y, \text{ for } 1\le i\le p, \text{ and } t_{p+1}=x.
\end{equation}

\noindent
Then, for each \(1\le i\le p+1\), we have the \textit{inner transfer}

\begin{equation}
\label{eqn:InnerATLyr1P2xP}
T_{1,i}(h_i)=h_i^{\mathrm{Tr}_G(H_{1,i})}\cdot H_{1,i}^\prime=h_i^{1+t_i+t_i^2+\ldots +t_i^{p-1}}\cdot H_{1,i}^\prime,
\end{equation}

\noindent
which equals \((h_it_i^{-1})^pt_i^p\cdot H_{1,i}^\prime\), and the \textit{outer transfer}

\begin{equation}
\label{eqn:OuterATLyr1P2xP}
T_{1,i}(t_i)=t_i^p\cdot H_{1,i}^\prime,
\end{equation}

\noindent
since \(\mathrm{ord}(h_iH_{1,i})=1\) and \(\mathrm{ord}(t_iH_{1,i})=p\).

Now we continue by considering the \textit{second layer} of subgroups.
For each of the normal subgroups \(H_{2,i}\triangleleft G\) \((1\le i\le p+1)\),
we select a generator

\begin{equation}
\label{eqn:GenLyr2P2xP}
u_1=y,\ u_i=x^py^{i-1} \text{ for } 2\le i\le p, \text{ and } u_{p+1}=x^p,
\end{equation}

\noindent
such that \(H_{2,i}=\langle u_i,G^\prime\rangle\).
Among these subgroups, the \textit{Frattini subgroup} \(H_{2,p+1}=\langle x^p,G^\prime\rangle=G^p\cdot G^\prime\)
is particularly distinguished.
A uniform way of defining generators \(t_i,w_i\) of a transversal such that \(G=\langle t_i,w_i,H_{2,i}\rangle\),
is to set

\begin{equation}
\label{eqn:TrvLyr2P2xP}
t_i=x,\ w_i=x^p,\text{ for } 1\le i\le p, \text{ and } t_{p+1}=x,\ w_{p+1}=y.
\end{equation}

\noindent
Since \(\mathrm{ord}(u_iH_{2,i})=1\), but on the other hand \(\mathrm{ord}(t_iH_{2,i})=p^2\) and \(\mathrm{ord}(w_iH_{2,i})=p\),
for \(1\le i\le p+1\),
with the single exception that \(\mathrm{ord}(t_{p+1}H_{2,p+1})=p\),
we obtain the following expressions for the \textit{inner transfer}

\begin{equation}
\label{eqn:InnerATLyr2P2xP}
T_{2,i}(u_i)=u_i^{\mathrm{Tr}_G(H_{2,i})}\cdot H_{2,i}^\prime=u_i^{\sum_{j=0}^{p-1}\,\sum_{k=0}^{p-1}\,w_i^jt_i^k}\cdot H_{2,i}^\prime
=\prod_{j=0}^{p-1}\,\prod_{k=0}^{p-1}\,(w_i^jt_i^k)^{-1}u_iw_i^jt_i^k\cdot H_{2,i}^\prime,
\end{equation}

\noindent
and for the \textit{outer transfer}

\begin{equation}
\label{eqn:OuterATLyr2P2xP}
T_{2,i}(t_i)=t_i^{p^2}\cdot H_{2,i}^\prime,
\end{equation}

\noindent
exceptionally

\begin{equation}
\label{eqn:OuterATLyr2Var1P2xP}
T_{2,p+1}(t_{p+1})=(t_{p+1}^p)^{1+w_{p+1}+w_{p+1}^2+\ldots +w_{p+1}^{p-1}}\cdot H_{2,p+1}^\prime,
\end{equation}

\noindent
and

\begin{equation}
\label{eqn:OuterATLyr2Var2P2xP}
T_{2,i}(w_i)=(w_i^p)^{1+t_i+t_i^2+\ldots +t_i^{p-1}}\cdot H_{2,i}^\prime,
\end{equation}

\noindent
for \(1\le i\le p+1\).
Again, it should be emphasized that
the structure of the derived subgroups \(H_{1,i}^\prime\) and \(H_{2,i}^\prime\)
must be known explicitly
to specify the action of the Artin transfers completely.

%\newpage

%--------------------------------------------------------------------------------

\section{Transfer targets and kernels}
\label{s:TransferTargetKernel}

After our thorough treatment of the general theory of Artin transfers in \S\S\
\ref{s:TransversalsPermutations}
and
\ref{s:ArtinTransfer},
and their computational implementation for some simple cases in \S\
\ref{s:CompImpl},
we are now in the position to introduce \textit{Artin transfer patterns},
which form the central concept of this article.
They provide an incredibly powerful tool for
classifying finite and infinite pro-\(p\) groups
and for identifying a finite \(p\)-group \(G\)
with sufficiently many assigned components of its Artin pattern
by the \textit{strategy of pattern recognition}.
This is done in a search through the descendant tree with root \(G/G^\prime\)
by means of recursive applications of the \(p\)-group generation algorithm
by Newman
\cite{Nm}
and O'Brien
\cite{Ob}.

An Artin transfer pattern consists of two \textit{families}
of transfer \textit{targets}, resp. \textit{kernels},
which are also called \textit{multiplets},
whereas their individual components are referred to as \textit{singulets}.

%\newpage

%--------------------------------------------------------------------------------

\subsection{Singulets of transfer targets}
\label{ss:TargetSing}

\begin{theorem}
\label{thm:TargetSing}

Let \(G\) and \(T\) be groups.
Suppose that
\(H=\mathrm{im}(\varphi)=\varphi(G)\le T\)
is the image of \(G\) under a homomorphism \(\varphi:\,G\to T\),
and \(V=\varphi(U)\) is the image of an arbitrary subgroup \(U\le G\).
Then the following claims hold
without any further necessary assumptions.

\begin{enumerate}

\item
The commutator subgroup of \(V\) is the image of the commutator subgroup of \(U\),
that is

\begin{equation}
\label{eqn:ImgDrvSbg}
V^\prime=\varphi(U^\prime).
\end{equation}

\item
The restriction \(\varphi\vert_U:\,U\to V\) is an epimorphism which
induces a unique epimorphism

\begin{equation}
\label{eqn:IndEpi}
\tilde{\varphi}:\,U/U^\prime\to V/V^\prime,\ xU^\prime\mapsto\varphi(x)V^\prime.
\end{equation}

\noindent
Thus, the abelianization of \(V\),

\begin{equation}
\label{eqn:ImgIndEpi}
V/V^\prime\simeq (U/U^\prime)/\ker(\tilde{\varphi}),
\end{equation}

\noindent
is an epimorphic image of the abelianization of \(U\),
namely the quotient of \(U/U^\prime\) by the kernel of \(\tilde{\varphi}\),
which is given by

\begin{equation}
\label{eqn:KerIndEpi}
\ker(\tilde{\varphi})=\Bigl(U^\prime\cdot\ker(\varphi)\cap U\Bigr)/U^\prime.
\end{equation}

\item
Moreover,
the map \(\tilde{\varphi}\) is an isomorphism,
and the quotients \(V/V^\prime\simeq U/U^\prime\) are isomorphic,
if and only if

\begin{equation}
\label{eqn:IndIso}
\ker(\varphi)\le U^\prime.
\end{equation}

\end{enumerate}

\end{theorem}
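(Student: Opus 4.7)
The plan is to attack the three claims in order, since part (1) provides the key ingredient needed to construct the induced map in (2), and part (3) follows immediately from the kernel computation in (2). Throughout, I would rely on the general facts about induced homomorphisms and the factor theorem summarized in the appendix (\S\ \ref{s:HomSngSbg}).

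For part (1), the inclusion \(\varphi(U^\prime)\le V^\prime\) is automatic from \(\varphi([u_1,u_2])=[\varphi(u_1),\varphi(u_2)]\). Conversely, every generator \([v_1,v_2]\) of \(V^\prime\) has the form \([\varphi(u_1),\varphi(u_2)]=\varphi([u_1,u_2])\) since \(V=\varphi(U)\), so \(V^\prime\subseteq\varphi(U^\prime)\). Thus \(V^\prime=\varphi(U^\prime)\).

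For part (2), the restriction \(\varphi\vert_U\colon U\to V\) is an epimorphism by definition of \(V=\varphi(U)\). Composing with the canonical projection \(V\to V/V^\prime\) yields a homomorphism \(U\to V/V^\prime\) whose kernel contains \(U^\prime\) by part (1). The universal property of the abelianization (equivalently, the factor theorem) therefore produces the unique induced epimorphism \(\tilde{\varphi}\colon U/U^\prime\to V/V^\prime\), \(xU^\prime\mapsto\varphi(x)V^\prime\), and the isomorphism \(V/V^\prime\simeq(U/U^\prime)/\ker(\tilde{\varphi})\) is the first isomorphism theorem applied to \(\tilde{\varphi}\). The main technical step is the explicit description of \(\ker(\tilde{\varphi})\): chasing an element \(xU^\prime\in U/U^\prime\), we have \(xU^\prime\in\ker(\tilde{\varphi})\) iff \(\varphi(x)\in V^\prime=\varphi(U^\prime)\), iff there exists \(u^\prime\in U^\prime\) with \(\varphi(x)=\varphi(u^\prime)\), iff \(x(u^\prime)^{-1}\in\ker(\varphi)\), iff \(x\in U^\prime\cdot\ker(\varphi)\); combined with the requirement \(x\in U\), this gives the advertised formula \(\ker(\tilde{\varphi})=\bigl(U^\prime\cdot\ker(\varphi)\cap U\bigr)/U^\prime\). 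One should verify that the numerator does contain \(U^\prime\) (which is immediate since \(U^\prime\le U\) and \(U^\prime\le U^\prime\cdot\ker(\varphi)\)), so that the quotient is well-defined.

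For part (3), since \(\tilde{\varphi}\) is already surjective, it is an isomorphism precisely when \(\ker(\tilde{\varphi})\) is trivial, i.e., when \(U^\prime\cdot\ker(\varphi)\cap U=U^\prime\). If \(\ker(\varphi)\le U^\prime\), then \(U^\prime\cdot\ker(\varphi)=U^\prime\) and the condition is automatic. I expect no deep obstacle here; the only care required is bookkeeping — keeping track of which intersections and products are being taken and why the hypothesis \(x\in U\) is what forces the intersection with \(U\) to appear in the kernel description.
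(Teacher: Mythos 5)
Your argument is correct and essentially the paper's own: part (1) is the same computation on commutator generators, and part (2) obtains \(\tilde{\varphi}\) by factoring \(U\to V\to V/V^\prime\) through \(U/U^\prime\) (the appendix's Corollary \ref{cor:FctThrQtn}, a special case of the Theorem \ref{thm:IndHomQtn} which the paper cites), with your direct element chase for \(\ker(\tilde{\varphi})\) simply replacing the paper's appeal to Formulas (\ref{eqn:KerImCoKer}) and (\ref{eqn:InvAfterDir}). In part (3) you verify only the sufficiency of \(\ker(\varphi)\le U^\prime\), but the paper's proof does no more: the kernel formula actually gives triviality of \(\ker(\tilde{\varphi})\) exactly when \(\ker(\varphi)\cap U\le U^\prime\), so the stated converse requires \(\ker(\varphi)\le U\) (as in the later applications), and your treatment is no less complete than the paper's.
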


See Figure
\ref{fig:HomAndQtn}
for a visualization of this situation.

%\newpage

%--------------------------------------------------------------------------------

\begin{figure}[ht]
\caption{Induced homomorphism of derived quotients}
\label{fig:HomAndQtn}

% Induced homomorphism of quotients

\setlength{\unitlength}{1cm}
\begin{picture}(6,5)(-3,-6.5)

\put(-2,-2){\makebox(0,0)[cc]{\(V\)}}
%\put(-2,-4){\makebox(0,0)[cc]{\(\uparrow\)}}
\put(-2,-5.5){\vector(0,1){3}}
\put(-2.1,-4){\makebox(0,0)[rc]{\(\varphi\vert_U\)}}
\put(-2,-6){\makebox(0,0)[cc]{\(U\)}}

\put(0,-1.9){\makebox(0,0)[cb]{\(\omega_{V^\prime}\)}}
%\put(0,-2){\makebox(0,0)[cc]{\(\longrightarrow\)}}
\put(-1.5,-2){\vector(1,0){3}}
\put(-1.5,-6){\vector(1,0){3}}
%\put(0,-6){\makebox(0,0)[cc]{\(\longrightarrow\)}}
\put(0,-6.1){\makebox(0,0)[ct]{\(\omega_{U^\prime}\)}}

\put(2,-2){\makebox(0,0)[cc]{\(V/V^\prime\)}}
%\put(2,-4){\makebox(0,0)[cc]{\(\uparrow\)}}
\put(2,-5.5){\vector(0,1){3}}
\put(2.1,-4){\makebox(0,0)[lc]{\(\tilde{\varphi}\)}}
\put(2,-6){\makebox(0,0)[cc]{\(U/U^\prime\)}}

\end{picture}

\end{figure}

%\newpage

%--------------------------------------------------------------------------------

\begin{proof}
The statements can be seen in the following manner.
The image of the commutator subgroup is given by
\[\varphi(U^\prime)=\varphi(\lbrack U,U\rbrack)=\varphi(\langle\lbrack u,v\rbrack\mid u,v\in U\rangle)
=\langle\lbrack \varphi(u),\varphi(v)\rbrack\mid u,v\in U\rangle=\lbrack \varphi(U),\varphi(U)\rbrack
=\varphi(U)^\prime=V^\prime.\]
The homomorphism \(\varphi\) can be restricted to an epimorphism \(\varphi\vert_U:\ U\to\varphi(U)=V\).
According to Theorem
\ref{thm:IndHomQtn},
in particular, by the Formulas
(\ref{eqn:CritIndHom})
and
(\ref{eqn:KerImCoKer})
in the appendix,
the condition \(\varphi(U^\prime)=V^\prime\) implies the existence of a uniquely determined epimorphism
\(\tilde{\varphi}:\ U/U^\prime\to V/V^\prime\) such that
\(\tilde{\varphi}\circ\omega_{U^\prime}=\omega_{V^\prime}\circ\varphi\vert_U\).
The Isomorphism Theorem in Formula
(\ref{eqn:IsomThm})
in the appendix shows that \(V/V^\prime\simeq (U/U^\prime)/\ker(\tilde{\varphi})\).
Furthermore, by the Formulas
(\ref{eqn:KerImCoKer})
and
(\ref{eqn:InvAfterDir}),
the kernel of \(\tilde{\varphi}\) is given explicitly by
\[\ker(\tilde{\varphi})=\Bigl(\varphi^{-1}(\varphi(U^\prime))\cap U\Bigr)/U^\prime
=\Bigl(U^\prime\cdot\ker(\varphi)\cap U\Bigr)/U^\prime.\]
Thus, \(\tilde{\varphi}\) is an isomorphism if and only if \(\ker(\varphi)\unlhd U^\prime(\unlhd U)\).
\end{proof}

%\newpage

%--------------------------------------------------------------------------------

\noindent
\textbf{Functor of derived quotients.}
In analogy to section \S\
\ref{ss:FunctorialProps}
in the appendix,
a \textit{covariant functor}
\(F:\,\varphi\mapsto F(\varphi)=\tilde{\varphi}\)
can be used to map a morphism \(\varphi\) of one category
to an induced morphism \(\tilde{\varphi}\) of another category.

In the present situation,
we denote by \(\mathcal{G}\) the category of groups and
we define the domain of the functor \(F\) as the following \textit{category} \(\mathcal{G}_s\).
The objects of the category are pairs \((G,U)\)
consisting of a group \(G\) and a \textit{subgroup} \(U\le G\),

\begin{equation}
\label{eqn:ObjGSub}
\mathrm{Obj}(\mathcal{G}_s)=\lbrace (G,U)\mid G\in\mathrm{Obj}(\mathcal{G}),\ U\le G\rbrace.
\end{equation}

\noindent
For two objects \((G,U),(H,V)\in\mathrm{Obj}(\mathcal{G}_s)\),
the set of morphisms \(\mathrm{Mor}_{\mathcal{G}_s}((G,U),(H,V))\)
consists of \textit{epimorphisms} \(\varphi:\,G\to H\) such that \(\varphi(G)=H\) and \(\varphi(U)=V\),
briefly written as arrows \(\varphi:\,(G,U)\to (H,V)\),

\begin{equation}
\label{eqn:MorGSub}
\mathrm{Mor}_{\mathcal{G}_s}((G,U),(H,V))=\lbrace\varphi\in\mathrm{Mor}_{\mathcal{G}}(G,H)\mid\varphi(G)=H,\ \varphi(U)=V\rbrace.
\end{equation}

\noindent
The functor \(F:\,\mathcal{G}_s\to\mathcal{A}\)
from this category \(\mathcal{G}_s\) to the category \(\mathcal{A}\) of abelian groups
maps a pair \((G,U)\in\mathrm{Obj}(\mathcal{G}_s)\) 
to the commutator quotient group \(F((G,U)):=U/U^\prime\in\mathrm{Obj}(\mathcal{A})\) of the subgroup \(U\),
and it maps a morphism \(\varphi\in\mathrm{Mor}_{\mathcal{G}_s}((G,U),(H,V))\)
to the induced epimorphism \(F(\varphi):=\tilde{\varphi}\in\mathrm{Mor}_{\mathcal{A}}(U/U^\prime,V/V^\prime)\)
of the restriction \(\varphi\vert_U:\,U\to V\),

\begin{equation}
\label{eqn:FunctorGSub}
F:\,\mathcal{G}_s\to\mathcal{A},\ F((G,U)):=U/U^\prime,\ F(\varphi):=\tilde{\varphi}.
\end{equation}

\noindent
Existence and uniqueness of \(F(\varphi):=\tilde{\varphi}\) have been proved in Theorem
\ref{thm:TargetSing}
under the assumption that \(\varphi(U)=V\),
which is satisfied according to the definition of the arrow \(\varphi:\,(G,U)\to (H,V)\)
and automatically implies \(\varphi(U^\prime)=V^\prime\).

%--------------------------------------------------------------------------------

\begin{definition}
\label{dfn:poTarget}

Due to the results in Theorem
\ref{thm:TargetSing},
it makes sense to define a \textit{pre-order of transfer targets}
% on the set of abelian type invariants
on the image \(F(\mathrm{Obj}(\mathcal{G}_s))\) of the functor \(F\)
in the object class \(\mathrm{Obj}(\mathcal{A})\)
of the category \(\mathcal{A}\) of abelian groups
in the following manner.

For two objects \((G,U),(H,V)\in\mathrm{Obj}(\mathcal{G}_s)\),
a morphism \(\varphi\in\mathrm{Mor}_{\mathcal{G}_s}((G,U),(H,V))\),
and the images
\(F((G,U))=U/U^\prime,\ F((H,V))=V/V^\prime\in\mathrm{Obj}(\mathcal{A})\),
and \(F(\varphi)=\tilde{\varphi}\in\mathrm{Mor}_{\mathcal{A}}(U/U^\prime,V/V^\prime)\),\\
let (non-strict) \textit{precedence} be defined by

\begin{equation}
\label{eqn:TargetPrecedence}
V/V^\prime\preceq U/U^\prime
:\Longleftrightarrow
V/V^\prime\simeq (U/U^\prime)/\ker(\tilde{\varphi}),
\end{equation}

\noindent
and let \textit{equality} be defined by

\begin{equation}
\label{eqn:TargetEquality}
V/V^\prime=U/U^\prime
:\Longleftrightarrow
V/V^\prime\simeq U/U^\prime,
\end{equation}

\noindent
if the induced epimorphism
\(\tilde{\varphi}:\,U/U^\prime\to V/V^\prime\)
is an isomorphism.

\end{definition}

%--------------------------------------------------------------------------------

\begin{corollary}
\label{cor:poTarget}
If both components of the pairs \((G,U),(H,V)\in\mathrm{Obj}(\mathcal{G}_s)\)
are restricted to Hopfian groups,
then the pre-order of transfer targets
\(V/V^\prime\preceq U/U^\prime\) is actually a partial order.
\end{corollary}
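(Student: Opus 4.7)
I would verify the three axioms of a partial order on $F(\mathrm{Obj}(\mathcal{G}_s))$, with equality interpreted as abstract isomorphism of abelian groups as in \eqref{eqn:TargetEquality}.

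\emph{Reflexivity and transitivity.} For reflexivity, the identity $\mathrm{id}_G:(G,U)\to(G,U)$ is an arrow of $\mathcal{G}_s$ whose induced map on abelianizations is the identity, with trivial kernel, so $U/U'\preceq U/U'$. For transitivity, I would compose witnesses: if $\varphi:(G,U)\to(H,V)$ witnesses $V/V'\preceq U/U'$ and $\psi:(H,V)\to(K,W)$ witnesses $W/W'\preceq V/V'$, then $\psi\circ\varphi$ is an arrow of $\mathcal{G}_s$, and functoriality of $F$ (cf.~\eqref{eqn:FunctorGSub}) yields $F(\psi\circ\varphi)=\tilde\psi\circ\tilde\varphi$, which is a composition of two epimorphisms, hence an epimorphism from $U/U'$ onto $W/W'$. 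Applying Theorem~\ref{thm:TargetSing} to the composite then gives $W/W'\preceq U/U'$.

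\emph{Antisymmetry.} This is the only place where Hopfianness is used, and it is the heart of the proof. Assume $V/V'\preceq U/U'$ is witnessed by $\varphi:(G,U)\to(H,V)$ and $U/U'\preceq V/V'$ by $\psi:(H,V)\to(G,U)$. The definition of a $\mathcal{G}_s$-morphism in \eqref{eqn:MorGSub} forces $\varphi(U)=V$ and $\psi(V)=U$, so the restrictions $\varphi|_U:U\to V$ and $\psi|_V:V\to U$ are both surjective. Consequently the composite $(\psi\circ\varphi)|_U = \psi|_V\circ\varphi|_U$ is a surjective endomorphism of $U$. Since $U$ is Hopfian, this endomorphism is an automorphism, hence injective; an element $u\in\ker(\varphi|_U)$ would map to $1$ under the composite, forcing $u=1$, so $\varphi|_U:U\to V$ is itself an isomorphism of groups. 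The induced map $\tilde\varphi:U/U'\to V/V'$ (which exists and is unique by Theorem~\ref{thm:TargetSing}) is then an isomorphism, giving $V/V'\simeq U/U'$, which is precisely the equality in \eqref{eqn:TargetEquality}.

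\emph{Main obstacle.} The point to get right is the location at which Hopfianness is invoked. It would be natural but incorrect to try to apply it directly to the abelian quotients $U/U'$ and $V/V'$, for which the Hopfian property is not inherited in general. The correct move is to lift the situation one step: the two $\mathcal{G}_s$-morphisms $\varphi,\psi$ compose to a surjective endomorphism of the Hopfian group $U$ itself, and only after this observation does one descend again to the abelianization via the functor $F$. Once this viewpoint is adopted, no further technicalities intervene, and the corollary follows directly from Theorem~\ref{thm:TargetSing}(iii).
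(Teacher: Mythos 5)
Your argument is correct. For reflexivity and transitivity you do exactly what the paper does, namely invoke the functorial identity and compositum, Formulas (\ref{eqn:FunctorialIdentity}) and (\ref{eqn:FunctorialCompositum}). For antisymmetry, however, your route is genuinely different. The paper stays inside the category of abelian groups: from the two witnessing epimorphisms it writes \(V/V^\prime\simeq (U/U^\prime)/\ker(\tilde{\varphi}_1)\simeq\bigl((V/V^\prime)/\ker(\tilde{\varphi}_2)\bigr)/\ker(\tilde{\varphi}_1)\) and concludes that both kernels are trivial, i.e.\ it uses the Hopf property in the form ``no Hopfian group is isomorphic to a proper quotient of itself'' applied to the derived quotients \(U/U^\prime\) and \(V/V^\prime\). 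You instead lift to the group level: since the \(\mathcal{G}_s\)-morphism conditions force \(\varphi(U)=V\) and \(\psi(V)=U\), the composite \(\psi\vert_V\circ\varphi\vert_U\) is a surjective endomorphism of the Hopfian group \(U\), hence an automorphism, so \(\varphi\vert_U:U\to V\) is an isomorphism and therefore \(\tilde{\varphi}\) is an isomorphism, which is exactly the equality in (\ref{eqn:TargetEquality}). The trade-off: your version applies the hypothesis literally as stated (Hopficity of the components \(U\), \(V\)) and yields the slightly stronger conclusion that \(\varphi\vert_U\) itself is an isomorphism, whereas the paper's one-line computation implicitly needs the abelianizations \(U/U^\prime\), \(V/V^\prime\) to be Hopfian --- automatic for the finite or finitely generated groups relevant in the sequel, but not a formal consequence of the stated hypothesis; this is precisely the pitfall you identify in your ``main obstacle'' remark, and your placement of the Hopficity argument avoids it.
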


\begin{proof}
We use the functorial properties of the functor \(F\).
The \textit{reflexivity} of the partial order follows from the functorial identity in Formula
(\ref{eqn:FunctorialIdentity}),
and the \textit{transitivity} is a consequence of the functorial compositum in Formula
(\ref{eqn:FunctorialCompositum}),
given in the appendix.
The \textit{antisymmetry} might be a problem for infinite groups,
since it is known that there exist so-called \textit{non-Hopfian} groups.
However, for finite groups, and more generally for \textit{Hopfian} groups,
it is due to the implication
\(V/V^\prime\simeq (U/U^\prime)/\ker(\tilde{\varphi_1})
\simeq\Bigl((V/V^\prime)/\ker(\tilde{\varphi_2})\Bigr)/\ker(\tilde{\varphi_1})\)
\(\Longrightarrow\)
\(\ker(\tilde{\varphi_1})=\ker(\tilde{\varphi_2})=1\).
\end{proof}

%\newpage

%--------------------------------------------------------------------------------

\subsection{Singulets of transfer kernels}
\label{ss:KernelSing}

Suppose that \(G\) and \(T\) are groups, \(H=\mathrm{im}(\varphi)=\varphi(G)\le T\) is the image of \(G\)
under a homomorphism \(\varphi:\ G\to T\),
and \(V=\varphi(U)\) is the image of a subgroup \(U\le G\) of finite index \(1\le n:=(G:U)<\infty\).
Let \(T_{G,U}\) be the Artin transfer from \(G\) to \(U/U^\prime\).

\begin{theorem}
\label{thm:KernelSing}

If \(\ker(\varphi)\le U\),
then the image \((\varphi(\ell_1),\ldots,\varphi(\ell_n))\)
of a left transversal \((\ell_1,\ldots,\ell_n)\) of \(U\) in \(G\)
is a left transversal  of \(V\) in \(H\),
the index \((H:V)=(G:U)=n<\infty\) remains the same and is therefore finite,
and the Artin transfer \(T_{H,V}\) from \(H\) to \(V/V^\prime\) exists.

\begin{enumerate}

\item
The following connections exist between the two Artin transfers:\\
the required condition for the composita
of the commutative diagram in Figure
\ref{fig:EpiAndArtTfe},

\begin{equation}
\label{eqn:FunctorialMorphism}
\tilde{\varphi}\circ T_{G,U}=T_{H,V}\circ\varphi,
\end{equation}

\noindent
and, consequently, the inclusion of the kernels,

\begin{equation}
\label{eqn:KernelInclusion}
\varphi(\ker(T_{G,U}))\le\ker(T_{H,V}).
\end{equation}

\item
A sufficient (but not necessary) condition for the equality of the kernels is:

\begin{equation}
\label{eqn:KernelCoincidence}
\ker(T_{H,V})=\varphi(\ker(T_{G,U}))
\qquad \text{ if } \qquad
\ker(\varphi)\le U^\prime.
\end{equation}

\end{enumerate}

\end{theorem}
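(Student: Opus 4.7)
My plan is to first verify the structural preconditions that make $T_{H,V}$ well defined, then chase the commutative square componentwise, and finally read off both kernel statements from it. Concretely, I would invoke Proposition~\ref{prp:TransvUnderHom} applied with the subgroup $U$ in place of its $H$: the hypothesis $\ker(\varphi)\le U$ gives exactly that $(\varphi(\ell_1),\ldots,\varphi(\ell_n))$ is a left transversal of $V=\varphi(U)$ in $H=\varphi(G)$, and that $(H:V)=n$, which legitimises the Artin transfer $T_{H,V}$. The critical bookkeeping step is then to verify that the permutation $\lambda_x\in S_n$ attached to $x\in G$ via $(\ell_1,\ldots,\ell_n)$ coincides with the permutation attached to $\varphi(x)\in H$ via $(\varphi(\ell_1),\ldots,\varphi(\ell_n))$: applying $\varphi$ to the defining relation $x\ell_i\in\ell_{\lambda_x(i)}U$ yields $\varphi(x)\varphi(\ell_i)\in\varphi(\ell_{\lambda_x(i)})V$, so the same $\lambda_x$ governs both sides. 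This identification of permutations is the main obstacle, in the sense that without it the subsequent computation would involve mismatched indexings and could not close.

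Next I would verify the commutativity $\tilde{\varphi}\circ T_{G,U}=T_{H,V}\circ\varphi$ by the direct calculation
\begin{equation*}
T_{H,V}(\varphi(x))=\prod_{i=1}^n\varphi(\ell_{\lambda_x(i)})^{-1}\varphi(x)\varphi(\ell_i)\cdot V^\prime=\varphi\Bigl(\prod_{i=1}^n u_x(i)\Bigr)\cdot V^\prime,
\end{equation*}
using the permutation identification above together with the homomorphism property of $\varphi$, while by the defining property of $\tilde{\varphi}$ recorded in Theorem~\ref{thm:TargetSing},
\begin{equation*}
\tilde{\varphi}\bigl(T_{G,U}(x)\bigr)=\tilde{\varphi}\Bigl(\prod_{i=1}^n u_x(i)\cdot U^\prime\Bigr)=\varphi\Bigl(\prod_{i=1}^n u_x(i)\Bigr)\cdot V^\prime.
\end{equation*}
Once these two expressions match, the inclusion $\varphi(\ker(T_{G,U}))\le\ker(T_{H,V})$ is immediate: any $x\in\ker(T_{G,U})$ is sent along the upper path to $\tilde{\varphi}(1\cdot U^\prime)=1\cdot V^\prime$, which by commutativity forces $T_{H,V}(\varphi(x))=1\cdot V^\prime$.

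For part~(2), I would upgrade this inclusion to an equality by exploiting the stronger hypothesis $\ker(\varphi)\le U^\prime$. Formula~\eqref{eqn:KerIndEpi} of Theorem~\ref{thm:TargetSing} then simplifies to
\begin{equation*}
\ker(\tilde{\varphi})=\bigl(U^\prime\cdot\ker(\varphi)\cap U\bigr)/U^\prime=U^\prime/U^\prime=1,
\end{equation*}
so $\tilde{\varphi}$ is an isomorphism. Given $h\in\ker(T_{H,V})$, surjectivity of $\varphi$ onto $H=\mathrm{im}(\varphi)$ provides $x\in G$ with $\varphi(x)=h$; the commutative square then yields $\tilde{\varphi}(T_{G,U}(x))=1\cdot V^\prime$, and injectivity of $\tilde{\varphi}$ forces $T_{G,U}(x)=1\cdot U^\prime$, i.e.\ $x\in\ker(T_{G,U})$. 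Hence $h\in\varphi(\ker(T_{G,U}))$, which combined with the inclusion from part~(1) delivers the asserted equality.
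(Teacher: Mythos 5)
Your proposal is correct and follows essentially the same route as the paper: Proposition~\ref{prp:TransvUnderHom} for the image transversal, the induced epimorphism $\tilde{\varphi}$ of Theorem~\ref{thm:TargetSing} together with the identification $\lambda_{\varphi(x)}=\lambda_x$ to verify $\tilde{\varphi}\circ T_{G,U}=T_{H,V}\circ\varphi$ componentwise, and then the triviality of $\ker(\tilde{\varphi})$ under $\ker(\varphi)\le U^\prime$ (plus surjectivity of $\varphi$) to upgrade the kernel inclusion to equality. This matches the paper's argument, which phrases the last step via the inverse isomorphism $T_{G,U}=\tilde{\varphi}^{-1}\circ T_{H,V}\circ\varphi$, an equivalent formulation of your injectivity/surjectivity chase.
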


See Figure
\ref{fig:EpiAndArtTfe}
for a visualization of this scenario.

%\newpage

%--------------------------------------------------------------------------------

\begin{figure}[ht]
\caption{Epimorphism and Artin transfer}
\label{fig:EpiAndArtTfe}

% Epimorphisms and Artin transfer

\setlength{\unitlength}{1cm}
\begin{picture}(6,5)(-3,-6.5)

\put(-2,-2){\makebox(0,0)[cc]{\(H\)}}
%\put(-2,-4){\makebox(0,0)[cc]{\(\uparrow\)}}
\put(-2,-5.5){\vector(0,1){3}}
\put(-2.1,-4){\makebox(0,0)[rc]{\(\varphi\)}}
\put(-2,-6){\makebox(0,0)[cc]{\(G\)}}

\put(0,-1.9){\makebox(0,0)[cb]{\(T_{H,V}\)}}
%\put(0,-2){\makebox(0,0)[cc]{\(\longrightarrow\)}}
\put(-1.5,-2){\vector(1,0){3}}
\put(-1.5,-6){\vector(1,0){3}}
%\put(0,-6){\makebox(0,0)[cc]{\(\longrightarrow\)}}
\put(0,-6.1){\makebox(0,0)[ct]{\(T_{G,U}\)}}

\put(2,-2){\makebox(0,0)[cc]{\(V/V^\prime\)}}
%\put(2,-4){\makebox(0,0)[cc]{\(\uparrow\)}}
\put(2,-5.5){\vector(0,1){3}}
\put(2.1,-4){\makebox(0,0)[lc]{\(\tilde{\varphi}\)}}
\put(2,-6){\makebox(0,0)[cc]{\(U/U^\prime\)}}

\end{picture}

\end{figure}

%\newpage

%--------------------------------------------------------------------------------

\begin{proof}
The truth of these statements can be justified in the following way.
The first part has been proved in Proposition
\ref{prp:TransvUnderHom}
already:
Let \((\ell_1,\ldots,\ell_n)\) be a left transversal of \(U\) in \(G\).
Then \(G=\dot{\cup}_{i=1}^n\,\ell_iU\) is a disjoint union
but the union \(\varphi(G)=\cup_{i=1}^n\,\varphi(\ell_i)\varphi(U)\) is not necessarily disjoint.
For \(1\le j,k\le n\),
we have \(\varphi(\ell_j)\varphi(U)=\varphi(\ell_k)\varphi(U)\)
\(\Leftrightarrow\) \(\varphi(U)=\varphi(\ell_j)^{-1}\varphi(\ell_k)\varphi(U)=\varphi(\ell_j^{-1}\ell_k)\varphi(U)\)
\(\Leftrightarrow\) \(\varphi(\ell_j^{-1}\ell_k)=\varphi(u)\) for some element \(u\in U\)
\(\Leftrightarrow\) \(\varphi(u^{-1}\ell_j^{-1}\ell_k)=1\)
\(\Leftrightarrow\) \(u^{-1}\ell_j^{-1}\ell_k=:x\in\ker(\varphi)\).
However, if the condition \(\ker(\varphi)\unlhd U\) is satisfied, then we are able to conclude that\\
\(\ell_j^{-1}\ell_k=ux\in U\), and thus \(j=k\).

Let \(\tilde{\varphi}:\ U/U^\prime\to V/V^\prime\)
be the epimorphism obtained in the manner indicated in the proof of Theorem
\ref{thm:TargetSing}
and Formula
(\ref{eqn:IndEpi}).
For the image of \(x\in G\) under the Artin transfer, we obtain

\begin{equation*}
\begin{aligned}
\tilde{\varphi}(T_{G,U}(x))
&= \tilde{\varphi}\Bigl(\prod_{i=1}^n\,\ell_{\lambda_x(i)}^{-1}x\ell_i\cdot U^\prime\Bigr)
=\prod_{i=1}^n\,\tilde{\varphi}\Bigl(\ell_{\lambda_x(i)}^{-1}x\ell_i\cdot U^\prime\Bigr) \\
&=\prod_{i=1}^n\,\varphi(\ell_{\lambda_x(i)}^{-1}x\ell_i)\cdot\varphi(U^\prime)
=\prod_{i=1}^n\,\varphi(\ell_{\lambda_x(i)})^{-1}\varphi(x)\varphi(\ell_i)\cdot\varphi(U^\prime).
\end{aligned}
\end{equation*}

\noindent
Since \(\varphi(U^\prime)=\varphi(U)^\prime=V^\prime\),
the right hand side equals \(T_{H,V}(\varphi(x))\),
provided that \((\varphi(\ell_1),\ldots,\varphi(\ell_n))\) is a left transversal of \(V\) in \(H\),
which is correct when \(\ker(\varphi)\unlhd U\).
This shows that the diagram in Figure
\ref{fig:EpiAndArtTfe}
is commutative, that is,
\(\tilde{\varphi}\circ T_{G,U}=T_{H,V}\circ\varphi\).
It also yields the connection between the permutations \(\lambda_{\varphi(x)}=\lambda_x\) and
the monomials \(u_{\varphi(x)}(i)=\varphi(u_x(i))\) for all \(1\le i\le n\). 
As a consequence, we obtain the inclusion
\(\varphi(\ker(T_{G,U}))\le\ker(T_{H,V})\), if \(\ker(\varphi)\unlhd U\).
Finally, if \(\ker(\varphi)\unlhd U^\prime\),
then the previous section has shown that \(\tilde{\varphi}\) is an isomorphism.
Using the inverse isomorphism, we get
\(T_{G,U}=\tilde{\varphi}^{-1}\circ T_{H,V}\circ\varphi\),
which proves the equation \(\varphi(\ker(T_{G,U}))=\ker(T_{H,V})\).
More explicitly, we have the following chain of equivalences and implications:
\[x\in\ker(T_{G,U}) \Longleftrightarrow T_{G,U}(x)=1
\Longrightarrow 1=\tilde{\varphi}(T_{G,U}(x))=T_{H,V}(\varphi(x))
\Longleftrightarrow \varphi(x)\in\ker(T_{H,V}).\]
Conversely, \(\varphi(x)\in\ker(T_{H,V})\) only implies
\(T_{G,U}(x)\in\ker(\tilde{\varphi})=\Bigl(U^\prime\cdot\ker(\varphi)\cap U\Bigr)/U^\prime\).
Therefore, we certainly have \(\ker(T_{H,V})=\varphi(\ker(T_{G,U}))\)
if \(\ker(\varphi)\le U^\prime\), which is, however, not necessary.
\end{proof}

%\newpage

%--------------------------------------------------------------------------------

\noindent
\textbf{Artin transfers as natural transformations.}
Artin transfers \(T_{G,U}\) can be viewed as components of a \textit{natural transformation} \(T\)
between two functors \(?\) and \(F\) from the following category \(\mathcal{G}_{f}\)
to the usual category \(\mathcal{G}\) of groups.

The objects of the category \(\mathcal{G}_{f}\) are pairs \((G,U)\)
consisting of a group \(G\) and a subgroup \(U\le G\) of \textit{finite index} \((G:U)<\infty\),

\begin{equation}
\label{eqn:ObjGFinInd}
\mathrm{Obj}(\mathcal{G}_{f})=\lbrace (G,U)\mid G\in\mathrm{Obj}(\mathcal{G}),\ U\le G,\ (G:U)<\infty\rbrace.
\end{equation}

For two objects \((G,U),(H,V)\in\mathrm{Obj}(\mathcal{G}_{f})\),
the set of morphisms \(\mathrm{Mor}_{\mathcal{G}_{f}}((G,U),(H,V))\)
consists of \textit{epimorphisms} \(\varphi:\,G\to H\) satisfying \(\varphi(G)=H\), \(\varphi(U)=V\),
and the additional condition \(\ker(\varphi)\le U\) for their kernels,
briefly written as arrows \(\varphi:\,(G,U)\to (H,V)\),

\begin{equation}
\label{eqn:MorGFinInd}
\mathrm{Mor}_{\mathcal{G}_{f}}((G,U),(H,V))=\lbrace\varphi\in\mathrm{Mor}_{\mathcal{G}}(G,H)\mid\varphi(G)=H,\ \varphi(U)=V,\ \ker(\varphi)\le U\rbrace.
\end{equation}

The \textit{forgetful functor} \(?:\,\mathcal{G}_{f}\to\mathcal{G}\)
from this category \(\mathcal{G}_{f}\) to the category \(\mathcal{G}\) of groups
maps a pair \((G,U)\in\mathrm{Obj}(\mathcal{G}_{f})\) 
to its first component \(?((G,U)):=G\in\mathrm{Obj}(\mathcal{G})\),
and it maps a morphism \(\varphi\in\mathrm{Mor}_{\mathcal{G}_{f}}((G,U),(H,V))\)
to the underlying epimorphism \(?(\varphi):=\varphi\in\mathrm{Mor}_{\mathcal{G}}(G,H)\).

\begin{equation}
\label{eqn:ForgetGFinInd}
?:\,\mathcal{G}_{f}\to\mathcal{G},\ ?((G,U)):=G,\ ?(\varphi):=\varphi.
\end{equation}

The functor \(F:\,\mathcal{G}_{f}\to\mathcal{G}\)
from \(\mathcal{G}_{f}\) to the category \(\mathcal{G}\) of groups
maps a pair \((G,U)\in\mathrm{Obj}(\mathcal{G}_{f})\) 
to the commutator quotient group \(F((G,U)):=U/U^\prime\in\mathrm{Obj}(\mathcal{G})\) of the subgroup \(U\),
and it maps a morphism \(\varphi\in\mathrm{Mor}_{\mathcal{G}_{f}}((G,U),(H,V))\)
to the induced epimorphism \(F(\varphi):=\tilde{\varphi}\in\mathrm{Mor}_{\mathcal{G}}(U/U^\prime,V/V^\prime)\)
of the restriction \(\varphi\vert_U:\,U\to V\).
Note that we must abstain here from letting \(F\) map into the subcategory \(\mathcal{A}\) of abelian groups.

\begin{equation}
\label{eqn:FunctorGAster}
F:\,\mathcal{G}_{f}\to\mathcal{G},\ F((G,U)):=U/U^\prime,\ F(\varphi):=\tilde{\varphi}.
\end{equation}

\noindent
The system \(T\) of all Artin transfers fulfils the requirements for a natural transformation
\(T:\,?\to F\) between these two functors, since we have

\begin{equation}
\label{eqn:NatTransform}
F(\varphi)\circ T_{G,U}=\tilde{\varphi}\circ T_{G,U}=T_{H,V}\circ\varphi=T_{H,V}\circ ?(\varphi),
\end{equation}

\noindent
for every morphism \(\varphi:\,(G,U)\to (H,V)\) of the category \(\mathcal{G}_{f}\).

%--------------------------------------------------------------------------------

\begin{definition}
\label{dfn:poKernel}

Due to the results in Theorem
\ref{thm:KernelSing},
it makes sense to define a \textit{pre-order of transfer kernels}
on the kernels \(\ker(T_{G,U})\) of the components \(T_{G,U}\) of the natural transformation \(T\)
in the object class \(\mathrm{Obj}(\mathcal{G})\)
of the category \(\mathcal{G}\) of groups
in the following manner.

For two objects \((G,U),(H,V)\in\mathrm{Obj}(\mathcal{G}_{f})\),
a morphism \(\varphi\in\mathrm{Mor}_{\mathcal{G}_{f}}((G,U),(H,V))\),
and the images
\(F((G,U))=U/U^\prime,\ F((H,V))=V/V^\prime\in\mathrm{Obj}(\mathcal{G})\),
and \(F(\varphi)=\tilde{\varphi}\in\mathrm{Mor}_{\mathcal{G}}(U/U^\prime,V/V^\prime)\),\\
let (non-strict) \textit{precedence} be defined by

\begin{equation}
\label{eqn:KernelPrecedence}
\ker(T_{G,U})\preceq\ker(T_{H,V})
:\Longleftrightarrow
\varphi(\ker(T_{G,U}))\le\ker(T_{H,V}),
\end{equation}

\noindent
and let \textit{equality} be defined by

\begin{equation}
\label{eqn:KernelEquality}
\ker(T_{G,U})=\ker(T_{H,V})
:\Longleftrightarrow
\varphi(\ker(T_{G,U}))=\ker(T_{H,V}),
\end{equation}

\noindent
if the induced epimorphism
\(\tilde{\varphi}:\,U/U^\prime\to V/V^\prime\)
is an isomorphism.

\end{definition}

%--------------------------------------------------------------------------------

\begin{corollary}
\label{cor:poKernel}
If both components of the pairs \((G,U),(H,V)\in\mathrm{Obj}(\mathcal{G}_s)\)
are restricted to Hopfian groups,
then the pre-order of transfer kernels
\(\ker(T_{G,U})\preceq\ker(T_{H,V})\) is actually a partial order.
\end{corollary}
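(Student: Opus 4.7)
The plan is to verify reflexivity, transitivity, and antisymmetry, closely mirroring the proof of Corollary~\ref{cor:poTarget} but now tracking kernels of Artin transfers rather than derived quotients themselves, and exploiting the fact that the system $T$ is a natural transformation, as recorded in Formula~(\ref{eqn:NatTransform}).

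For reflexivity, I would apply the identity morphism $\mathrm{id}_G \in \mathrm{Mor}_{\mathcal{G}_{f}}((G,U),(G,U))$. By the functorial identity of $F$, the induced map is $\tilde{\mathrm{id}}_G = \mathrm{id}_{U/U^\prime}$, and trivially $\mathrm{id}_G(\ker(T_{G,U})) = \ker(T_{G,U})$, so $\ker(T_{G,U}) \preceq \ker(T_{G,U})$. For transitivity, given witnessing epimorphisms $\varphi: (G,U) \to (H,V)$ and $\psi: (H,V) \to (K,W)$, the composition $\psi \circ \varphi$ is again a morphism in $\mathcal{G}_{f}$ (the condition $\ker(\psi \circ \varphi) \le U$ follows from $\ker(\varphi) \le U$ and $\varphi^{-1}(\ker(\psi)) \le \varphi^{-1}(V) = U$), and the chain $\psi(\varphi(\ker(T_{G,U}))) \le \psi(\ker(T_{H,V})) \le \ker(T_{K,W})$ yields $\ker(T_{G,U}) \preceq \ker(T_{K,W})$ directly by Definition~\ref{dfn:poKernel}.

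The main obstacle is antisymmetry. Suppose epimorphisms $\varphi: (G,U) \to (H,V)$ and $\psi: (H,V) \to (G,U)$ witness both precedences. Then $\psi \circ \varphi: G \to G$ and $\varphi \circ \psi: H \to H$ are surjective endomorphisms, so the Hopfian hypothesis on $G$ and $H$ forces both to be automorphisms. From injectivity of $\psi \circ \varphi$ I conclude injectivity of $\varphi$, hence—being also surjective—$\varphi$ is an isomorphism of groups; the analogous conclusion holds for $\psi$. Its restriction $\varphi\vert_U: U \to V$ is therefore an isomorphism, so the induced map $\tilde{\varphi}: U/U^\prime \to V/V^\prime$ is an isomorphism, which is the prerequisite of the equality part of Definition~\ref{dfn:poKernel}.

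It remains to upgrade the two precedence inclusions to the equality $\varphi(\ker(T_{G,U})) = \ker(T_{H,V})$. Here I would exploit that $\psi \circ \varphi$ is an automorphism of $G$ preserving $U$, since $(\psi \circ \varphi)(U) = \psi(V) = U$; hence it lies in $\mathrm{Mor}_{\mathcal{G}_{f}}((G,U),(G,U))$, and so does its inverse. The naturality square~(\ref{eqn:NatTransform}) applied to $\psi \circ \varphi$ gives $(\psi \circ \varphi)(\ker(T_{G,U})) \le \ker(T_{G,U})$, while applying it to $(\psi \circ \varphi)^{-1}$ yields the reverse inclusion, so $(\psi \circ \varphi)(\ker(T_{G,U})) = \ker(T_{G,U})$. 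Combining this with the chain $\psi(\varphi(\ker(T_{G,U}))) \le \psi(\ker(T_{H,V})) \le \ker(T_{G,U})$ squeezes $\psi(\ker(T_{H,V})) = \ker(T_{G,U})$; applying the isomorphism $\varphi$ to both sides delivers $\ker(T_{H,V}) = \varphi(\ker(T_{G,U}))$, which is precisely the equality~(\ref{eqn:KernelEquality}) and completes antisymmetry.
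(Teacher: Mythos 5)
Your proposal is correct, and for reflexivity and transitivity it is essentially the paper's own argument: the functorial identity, the functorial compositum, and the same verification that the composite is again a morphism of \(\mathcal{G}_f\) via \(\ker(\psi\circ\varphi)=\varphi^{-1}(\ker(\psi))\le\varphi^{-1}(V)=\varphi^{-1}(\varphi(U))=U\cdot\ker(\varphi)=U\). Where you genuinely go beyond the paper is antisymmetry: the paper merely asserts that it is ``certainly satisfied for finite groups, and more generally for Hopfian groups,'' whereas you supply the missing argument. Your route --- Hopficity of \(G\) and \(H\) forces the surjective endomorphisms \(\psi\circ\varphi\) and \(\varphi\circ\psi\) to be automorphisms, hence \(\varphi\) and \(\psi\) are isomorphisms, hence \(\tilde{\varphi}\) is an isomorphism (the prerequisite of the equality clause in Definition \ref{dfn:poKernel}), and then the kernel inclusion of Theorem \ref{thm:KernelSing} applied to the automorphism \(\psi\circ\varphi\in\mathrm{Mor}_{\mathcal{G}_f}((G,U),(G,U))\) and to its inverse squeezes \((\psi\circ\varphi)(\ker(T_{G,U}))=\ker(T_{G,U})\), from which \(\varphi(\ker(T_{G,U}))=\ker(T_{H,V})\) follows --- is sound and buys a fully explicit proof of exactly the point the paper leaves implicit. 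One small slip at the end: applying \(\varphi\) to both sides of \(\psi(\ker(T_{H,V}))=\ker(T_{G,U})\) literally yields \((\varphi\circ\psi)(\ker(T_{H,V}))=\varphi(\ker(T_{G,U}))\), not the claimed identity; instead, rewrite \((\psi\circ\varphi)(\ker(T_{G,U}))=\ker(T_{G,U})\) as \(\psi(\varphi(\ker(T_{G,U})))=\psi(\ker(T_{H,V}))\) and use injectivity of \(\psi\) (or apply \(\psi^{-1}\)) to conclude \(\varphi(\ker(T_{G,U}))=\ker(T_{H,V})\); with the facts you have already established this repair is immediate.
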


\begin{proof}
Similarly as in the proof of Corollary
\ref{cor:poTarget},
we use the properties of the functor \(F\).
The \textit{reflexivity} is due to the functorial identity in Formula
(\ref{eqn:FunctorialIdentity}).
The \textit{transitivity} is due to the functorial compositum in Formula
(\ref{eqn:FunctorialCompositum}),
where we have to observe the relations
\(\ker(\varphi)\le U\), \(\ker(\psi)\le V\), and Formula
(\ref{eqn:InvAfterDir})
in the appendix
for verifying the kernel relation
\[\ker(\psi\circ\varphi)=(\psi\circ\varphi)^{-1}(1)=\varphi^{-1}(\psi^{-1}(1))
=\varphi^{-1}(\ker(\psi))\le\varphi^{-1}(V)=\varphi^{-1}(\varphi(U))=U\cdot\ker(\varphi)=U\]
additionally to the image relation
\[(\psi\circ\varphi)(U)=\psi(\varphi(U))=\psi(V)=W.\]
The \textit{antisymmetry} is certainly satisfied for finite groups,
and more generally for \textit{Hopfian} groups.
\end{proof}

%\newpage

%--------------------------------------------------------------------------------

\subsection{Multiplets of transfer targets and kernels}
\label{ss:KernelTargetMult}

Instead of viewing various pairs \((G,U)\) which share the same first component \(G\)
as distinct objects in the categories \(\mathcal{G}_s\), resp. \(\mathcal{G}_f\),
which we used for describing singulets of transfer targets, resp. kernels,
we now consider a collective accumulation of singulets in \textit{multiplets}.
For this purpose, we shall define a new category \(\mathcal{G}_{(f)}\) of families,
which generalizes the category \(\mathcal{G}_f\), rather than the category \(\mathcal{G}_s\).
However, we have to prepare this definition with a criterion for the
compatibility of a system of subgroups with its image under a homomorphism.

%--------------------------------------------------------------------------------

\begin{proposition}
\label{prp:InvSetBij}
(See also
\cite[Thm.2.3.4, p.29]{Hl},
\cite[Satz 3.10, p.16]{Hp},
\cite[Thm.2.4, p.6]{Gs},
\cite[Thm.X.21, p.340]{Is}.)

For an epimorphism \(\phi:\, G\to H\) of groups,
the associated set mappings

\begin{equation}
\label{eqn:InvSetBij}
\phi:\,\mathcal{U}\to\mathcal{V} \text{ and } \phi^{-1}:\,\mathcal{V}\to\mathcal{U}
\end{equation}

\noindent
are inverse bijections between the following systems of subgroups

\begin{equation}
\label{eqn:RstSysSbg}
\mathcal{U}:=\lbrace U\mid\ker(\phi)\le U\le G\rbrace \text{ and } \mathcal{V}:=\lbrace V\mid V\le H\rbrace.
\end{equation}

\end{proposition}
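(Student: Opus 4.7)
The plan is to verify that $\phi$ and $\phi^{-1}$ are well-defined as maps between $\mathcal{U}$ and $\mathcal{V}$, and then to show that each composition is the identity on its respective domain. The four claims to check are: (a) $\phi(U) \le H$ for $U \in \mathcal{U}$, which is the standard fact that homomorphic images of subgroups are subgroups; (b) $\phi^{-1}(V) \in \mathcal{U}$ for $V \in \mathcal{V}$, which requires showing $\phi^{-1}(V)$ is a subgroup of $G$ (inverse images of subgroups under homomorphisms) and that $\ker(\phi) \le \phi^{-1}(V)$ (since $1 \in V$ forces $\phi^{-1}(\{1\}) \subseteq \phi^{-1}(V)$); (c) $\phi(\phi^{-1}(V)) = V$ for every $V \in \mathcal{V}$; and (d) $\phi^{-1}(\phi(U)) = U$ for every $U \in \mathcal{U}$.

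For (c), I would observe that the inclusion $\phi(\phi^{-1}(V)) \subseteq V$ is automatic, while the reverse inclusion uses surjectivity of $\phi$: for any $v \in V$ there exists $g \in G$ with $\phi(g) = v$, so $g \in \phi^{-1}(V)$ and $v = \phi(g) \in \phi(\phi^{-1}(V))$. This step only needs $\phi$ to be an epimorphism and does not invoke the kernel condition.

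The decisive step will be (d), which is where the hypothesis $\ker(\phi) \le U$ is indispensable. The inclusion $U \subseteq \phi^{-1}(\phi(U))$ is formal. For the reverse inclusion, let $g \in \phi^{-1}(\phi(U))$, so $\phi(g) = \phi(u)$ for some $u \in U$. Then $\phi(u^{-1}g) = 1$, i.e.\ $u^{-1}g \in \ker(\phi) \le U$, and hence $g = u \cdot (u^{-1}g) \in U$. Without $\ker(\phi) \le U$ one only obtains $\phi^{-1}(\phi(U)) = U \cdot \ker(\phi)$, which is the general formula alluded to in the proof of Theorem~\ref{thm:KernelSing} and in Formula~(\ref{eqn:InvAfterDir}) of the appendix; it collapses to $U$ precisely under our hypothesis.

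No genuine obstacle arises: once (d) is established, (a)--(c) are routine, and together they state exactly that $\phi$ and $\phi^{-1}$ are mutually inverse bijections between $\mathcal{U}$ and $\mathcal{V}$. The entire argument is really a bookkeeping exercise built on the single nontrivial identity $\phi^{-1}(\phi(U)) = U \cdot \ker(\phi)$ from elementary set-theoretic considerations on direct and inverse images of homomorphisms.
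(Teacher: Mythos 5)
Your proposal is correct and follows essentially the same route as the paper: the paper's proof likewise reduces everything to the two identities \(\phi(\phi^{-1}(V))=\phi(G)\cap V=V\) (using surjectivity) and \(\phi^{-1}(\phi(U))=U\cdot\ker(\phi)=U\) (using \(\ker(\phi)\le U\)), quoted from the appendix Formulas (\ref{eqn:DirAfterInv}) and (\ref{eqn:InvAfterDir}), which you reprove directly. Your additional explicit checks of well-definedness of the two set maps are routine and do not change the substance of the argument.
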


%--------------------------------------------------------------------------------

\begin{proof}
The fourth and fifth statement of Lemma
\ref{lem:NrmSbgAndKer}
in the appendix
show that usually the associated set mappings \(\phi^{-1}\) and \(\phi\)
of a homomorphism \(\phi:\,G\to H\)
are not inverse bijections between systems of subgroups of \(G\) and \(H\).
However, if we replace the homomorphism \(\phi\) by an epimorphism with \(H=\phi(G)\),
then the Formula
(\ref{eqn:DirAfterInv})
yields the first desired equality
\[\phi(\phi^{-1}(V))=\phi(G)\cap V=H\cap V=V \text{ for all } V\in\mathcal{V}:=\lbrace V\mid V\le H\rbrace.\]
Guided by the property \(\ker(\phi)=\phi^{-1}(1)\le\phi^{-1}(V)\)
of all pre-images \(\phi^{-1}(V)\) of \(V\in\mathcal{V}\),
we define a restricted system of subgroups of the domain \(G\),
\[\mathcal{U}:=\lbrace U\mid\ker(\phi)\le U\le G\rbrace,\]
and, according to Formula
(\ref{eqn:InvAfterDir}),
we consequently obtain the second required equality
\[\phi^{-1}(\phi(U))=U\cdot\ker(\phi)=U \text{ for all } U\in\mathcal{U},\]
which yields the crucial pair of inverse set bijections
\[\phi^{-1}:\,\mathcal{V}\to\mathcal{U} \text{ and } \phi:\,\mathcal{U}\to\mathcal{V}.\]
\end{proof}

%--------------------------------------------------------------------------------

After this preparation, we are able to specify the new category \(\mathcal{G}_{(f)}\).
The objects of the category \(\mathcal{G}_{(f)}\) are pairs \((G,(U_i)_{i\in I})\)
consisting of a group \(G\)
and the \textit{family} of all subgroups \(U_i\le G\) with \textit{finite index} \((G:U_i)<\infty\),

\begin{equation}
\label{eqn:ObjGFam}
\mathrm{Obj}(\mathcal{G}_{(f)})=\lbrace (G,(U_i)_{i\in I})\mid
G\in\mathrm{Obj}(\mathcal{G}),\ U_i\le G,\ (G:U_i)<\infty, \text{ for all } i\in I\rbrace,
\end{equation}

\noindent
where \(I\) denotes a suitable indexing set.
Note that \(G\) itself is one of the subgroups \(U_i\).

The morphisms of the new category are subject to more restrictive conditions,
which concern entire families of subgroups instead of just a single subgroup.

For two objects \((G,(U_i)_{i\in I}),(H,(V_i)_{i\in I})\in\mathrm{Obj}(\mathcal{G}_{(f)})\),
the set \(\mathrm{Mor}_{\mathcal{G}_{(f)}}((G,(U_i)_{i\in I}),(H,(V_i)_{i\in I}))\) of morphisms
consists of \textit{epimorphisms} \(\varphi:\,G\to H\) satisfying \(\varphi(G)=H\),
the image conditions \(\varphi(U_i)=V_i\),
and the kernel conditions \(\ker(\varphi)\le U_i\),
which imply the pre-image conditions \(\varphi^{-1}(V_i)=U_i\), for all \(i\in I\),
briefly written as arrows \(\varphi:\,(G,(U_i)_{i\in I})\to (H,(V_i)_{i\in I})\),

\begin{equation}
\label{eqn:MorGFam}
\mathrm{Mor}_{\mathcal{G}_{(f)}}((G,(U_i)_{i\in I}),(H,(V_i)_{i\in I}))=\lbrace\varphi\in\mathrm{Mor}_{\mathcal{G}}(G,H)\mid\varphi(U_i)=V_i,\ \ker(\varphi)\le U_i, \text{ for all } i\in I\rbrace.
\end{equation}

\noindent
Note that, in view of Proposition
\ref{prp:InvSetBij},
we can always use the same indexing set \(I\) for the domain and for the codomain
of morphisms, provided they satisfy the required kernel condition.

Now we come to the essential definition of \textit{Artin transfer patterns}.

%--------------------------------------------------------------------------------

\begin{definition}
\label{dfn:FullArtinPattern}
Let \((G,(U_i)_{i\in I})\in\mathrm{Obj}(\mathcal{G}_{(f)})\)
be an object of the category \(\mathcal{G}_{(f)}\).\\
The \textit{transfer target type} (TTT) of \(G\) is the family

\begin{equation}
\label{eqn:TTT}
\tau(G):=\bigl(U_i/U_i^\prime\bigr)_{i\in I} \text{ of abelian groups in }\mathrm{Obj}(\mathcal{A}).
\end{equation}

\noindent
The \textit{transfer kernel type} (TKT) of \(G\) is the family
\begin{equation}
\label{eqn:TKT}
\varkappa(G):=\bigl(\ker(T_{G,U_i})\bigr)_{i\in I} \text{ of groups in }\mathrm{Obj}(\mathcal{G}).
\end{equation}

\noindent
The \textit{complete Artin pattern} of \(G\) is the pair
\begin{equation}
\label{eqn:FullArtinPattern}
\mathrm{AP}_c(G):=\bigl(\tau(G),\varkappa(G)\bigr).
\end{equation}

\end{definition}

%--------------------------------------------------------------------------------

The \textit{natural partial order} on TTTs and TKTs is reduced to
the partial order on the components, according to the Definitions
\ref{dfn:poTarget}
and
\ref{dfn:poKernel}.

\begin{definition}
\label{dfn:poArtinPattern}
Let \((G,(U_i)_{i\in I}),(H,(V_i)_{i\in I})\in\mathrm{Obj}(\mathcal{G}_{(f)})\)
be two objects of the category \(\mathcal{G}_{(f)}\),
where all members of the families \((U_i)_{i\in I}\) and \((V_i)_{i\in I}\) are Hopfian groups.\\
Then (non-strict) \textit{precedence} of TTTs is defined by

\begin{equation}
\label{eqn:TTTPrecedence}
\tau(H)\preceq\tau(G)
:\Longleftrightarrow
V_i/V_i^\prime\preceq U_i/U_i^\prime, \text{ for all } i\in I,
\end{equation}

\noindent
and \textit{equality} of TTTs is defined by

\begin{equation}
\label{eqn:TTTEquality}
\tau(H)=\tau(G)
:\Longleftrightarrow
V_i/V_i^\prime=U_i/U_i^\prime, \text{ for all } i\in I.
\end{equation}

\noindent
(Non-strict) \textit{precedence} of TKTs is defined by

\begin{equation}
\label{eqn:TKTPrecedence}
\varkappa(G)\preceq\varkappa(H)
:\Longleftrightarrow
\ker(T_{G,U_i})\preceq\ker(T_{H,V_i}), \text{ for all } i\in I,
\end{equation}

\noindent
and \textit{equality} of TKTs is defined by

\begin{equation}
\label{eqn:TKTEquality}
\varkappa(G)=\varkappa(H)
:\Longleftrightarrow
\ker(T_{G,U_i})=\ker(T_{H,V_i}), \text{ for all } i\in I.
\end{equation}

\end{definition}

%--------------------------------------------------------------------------------

\noindent
We partition the indexing set \(I\) in two disjoint components,
according to whether components of the Artin pattern remain fixed or change
under an epimorphism.

\begin{definition}
\label{dfn:StbPol}
Let \((G,(U_i)_{i\in I}),(H,(V_i)_{i\in I})\in\mathrm{Obj}(\mathcal{G}_{(f)})\)
be two objects of the category \(\mathcal{G}_{(f)}\),
and let \(\varphi\in\mathrm{Mor}_{\mathcal{G}_{(f)}}((G,(U_i)_{i\in I}),(H,(V_i)_{i\in I}))\)
be a morphism between these objects.\\
The \textit{stable part} and the \textit{polarized part} of the Artin pattern \(\mathrm{AP}(G)\) of \(G\)
with respect to \(\varphi\) are defined by

\begin{equation}
\label{eqn:StbPol}
\begin{aligned}
\mathrm{Stb}_{\varphi}(G) &:= \lbrace i\in I\mid\ker(\varphi)\le U_i^\prime\rbrace, \\
\mathrm{Pol}_{\varphi}(G) &:= \lbrace i\in I\mid\ker(\varphi)\not\le U_i^\prime\rbrace.
\end{aligned}
\end{equation}

\end{definition}

\noindent
Accordingly, we have

\begin{equation}
\label{eqn:FixedChanged}
\begin{aligned}
V_i/V_i^\prime=U_i/U_i^\prime \text{ and } \ker(T_{G,U_i})=\ker(T_{H,V_i}), \text{ for all } i\in\mathrm{Stb}_{\varphi}(G), \\
V_i/V_i^\prime\prec U_i/U_i^\prime \text{ and } \ker(T_{G,U_i})\preceq\ker(T_{H,V_i}), \text{ for all } i\in\mathrm{Pol}_{\varphi}(G). 
\end{aligned}
\end{equation}

\noindent
Note that the precedence of polarized targets is strict as opposed to polarized kernels.

%\newpage

%--------------------------------------------------------------------------------

\subsection{The Artin pattern on a descendant tree}
\label{ss:APDescTree}

Before we specialize to the usual kinds of descendant trees of finite \(p\)-groups
\cite[\S\ 4, pp.163--164]{Ma5}
we consider an \textit{abstract form} of a \textit{rooted directed tree} \(\mathcal{T}\),
which is characterized by two relations.\\
Firstly, a basic relation
\(\pi(G)<G\)
between \textit{parent} and \textit{child} (also called \textit{immediate descendant}),
corresponding to a directed edge \(G\to\pi(G)\) of the tree,
for any vertex \(G\in\mathcal{T}\setminus\lbrace R\rbrace\)
which is different from the root \(R\) of the tree.\\
Secondly, an induced non-strict partial order relation,
\(\pi^n(G)\le G\) for some integer \(n\ge 0\),
between \textit{ancestor} and \textit{descendant},
corresponding to a path \(G=\pi^0(G)\to\pi^1(G)\to\cdots\to\pi^n(G)\) of directed edges,
for an arbitrary vertex \(G\in\mathcal{T}\), that is,
the ancestor \(\pi^n(G)\) is an iterated parent of the descendant.
Note that only an empty path with \(n=0\) starts from the root \(R\) of the tree,
which has no parent.\\
A brief justification of the partial order:
Reflexivity is due to the relation \(G=\pi^0(G)\).
Transitivity follows from the rule \(\pi^m(\pi^n(G))=\pi^{m+n}(G)\).
Antisymmetry is a consequence of the absence of cycles,
that is, \(H=\pi^m(G)=\pi^m(\pi^n(H))=\pi^{m+n}(H)\) implies \(m=n=0\) and thus \(H=G\).

\noindent
\textbf{The category of a tree.}
Now let \(\mathcal{T}\) be a rooted directed tree
whose vertices are groups \(G\in\mathrm{Obj}(\mathcal{G})\).
Then we define \(\mathcal{G}_{\mathcal{T}}\), the \textit{category associated with} \(\mathcal{T}\),
as a subcategory of the category \(\mathcal{G}_{(f)}\)
which was introduced in the Formulas
(\ref{eqn:ObjGFam})
and
(\ref{eqn:MorGFam}).

The objects of the category \(\mathcal{G}_{\mathcal{T}}\) are those pairs \((G,(U_i)_{i\in I})\)
in the object class of the category \(\mathcal{G}_{(f)}\)
whose first component is a vertex of the tree \(\mathcal{T}\),

\begin{equation}
\label{eqn:ObjGTree}
\mathrm{Obj}(\mathcal{G}_{\mathcal{T}})=\lbrace (G,(U_i)_{i\in I})\in\mathrm{Obj}(\mathcal{G}_{(f)})\mid
G\in\mathcal{T}\rbrace.
\end{equation}

\noindent
The morphisms of the category \(\mathcal{G}_{\mathcal{T}}\)
are selected along the paths of the tree \(\mathcal{T}\) only.\\
For two objects \((G,(U_i)_{i\in I}),(H,(V_i)_{i\in I})\in\mathrm{Obj}(\mathcal{G}_{\mathcal{T}})\),
the set \(\mathrm{Mor}_{\mathcal{G}_{\mathcal{T}}}((G,(U_i)_{i\in I}),(H,(V_i)_{i\in I}))\) of morphisms
is either empty or consists only of a single element,

\begin{equation}
\label{eqn:MorGTree}
\mathrm{Mor}_{\mathcal{G}_{\mathcal{T}}}((G,(U_i)_{i\in I}),(H,(V_i)_{i\in I}))=
\begin{cases}
\emptyset & \text{ if } H \text{ is not an ancestor of } G, \\
\lbrace\pi^n\rbrace & \text{ if } H=\pi^n(G) \text{ for some integer } n\ge 0.
\end{cases}
\end{equation}

\noindent
In the case of an ancestor-descendant relation between \(H\) and \(G\),
the specification of the supercategory \(\mathcal{G}_{(f)}\) enforces the following constraints
on the unique morphism \(\pi^n\):
the image relations \(\pi^n(U_i)=V_i\) and the kernel relations \(\ker(\pi^n)\le U_i\), for all \(i\in I\).

%--------------------------------------------------------------------------------

At this position, we must start to be more concrete.
In the descendant tree \(\mathcal{T}=\mathcal{T}(R)\) of a group \(R\), which is the root of the tree,
the formal \textit{parent operator} \(\pi\) gets a second meaning as a \textit{natural projection}
\(\pi:\,G\to\pi(G)=G/N\), \(x\mapsto\pi(x)=xN\),
from the child \(G\) to its parent \(\pi(G)\),
which is always the quotient of \(G\) by a suitable normal subgroup \(N\unlhd G\).
To be precise, the epimorphism \(\pi=\pi_G\) with kernel \(\ker(\pi)=N\) is actually dependent on its domain \(G\).
Therefore, the formal power \(\pi^n\) is only a convenient abbreviation for
the compositum \(\pi_{\pi^{n-1}(G)}\circ\cdots\circ\pi_{\pi^2(G)}\circ\pi_{\pi(G)}\circ\pi_{G}\).

As described in
\cite{Ma5},
there are several possible selections of the normal subgroup \(N\) in the parent definition \(G/N\).
Here, we would like to emphasize the following three choices
of \textit{characteristic subgroups} \(N\) of the child \(G\).
If \(p\) denotes a prime number and \(\mathcal{T}(R)\) is the descendant tree of a finite \(p\)-group \(R\),
then it is usual to take for \(N\unlhd G\)

\begin{enumerate}

\item
either the last non-trivial member \(N:=\gamma_c(G)\) of the lower central series of \(G\)

\item
or the last non-trivial member \(N:=P_{\tilde{c}-1}(G)\) of the lower exponent-\(p\) central series of \(G\)

\item
or the last non-trivial member \(N:=G^{(d-1)}\) of the derived series of \(G\),

\end{enumerate}

\noindent
where \(c=\mathrm{cl}(G)\) denotes the nilpotency class,
\(\tilde{c}=\mathrm{cl}_p(G)\) the lower exponent \(p\)-class,
and \(d=\mathrm{dl}(G)\) the derived length of \(G\), respectively.

Note that every descendant tree of finite \(p\)-groups
is subtree of a descendant tree with abelian root.
Therefore, it is no loss of generality to restrict our attention
to descendant trees with abelian roots.

\begin{theorem}
\label{thm:Comparability}
A uniform warranty for the \(\mathrm{comparability}\)
of the Artin patterns \((\tau(G),\varkappa(G))\)
of all vertices \(G\) of a descendant tree \(\mathcal{T}=\mathcal{T}(R)\) of finite \(p\)-groups
with abelian root \(R\),
in the sense of the natural partial order,
is given by the following restriction
of the family of subgroups \((U_i)_{i\in I}\) in the corresponding object \((G,(U_i)_{i\in I})\)
of the category \(\mathcal{G}_{\mathcal{T}}\).
The restriction depends on the definition of a parent \(\pi(G)\) in the descendant tree.

\begin{enumerate}

\item
\(\gamma_2(G)\le U_i\) for all \(i\in I\), when \(\pi(G)=G/\gamma_c(G)\) with \(c=\mathrm{cl}(G)\).

\item
\(P_1(G)\le U_i\) for all \(i\in I\), when \(\pi(G)=G/P_{\tilde{c}-1}(G)\) with \(\tilde{c}=\mathrm{cl}_p(G)\).

\item
\(G^{(1)}\le U_i\) for all \(i\in I\), when \(\pi(G)=G/G^{(d-1)}\) with \(d=\mathrm{dl}(G)\).

\end{enumerate}

\end{theorem}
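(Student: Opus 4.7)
The plan is to verify, for each of the three parent operators, that the kernel of every iterated parent map $\pi^n\colon G \to \pi^n(G)$ is contained in the distinguished characteristic subgroup named in the hypothesis on the family $(U_i)_{i\in I}$. Combined with the singulet results (Theorem \ref{thm:TargetSing} and Theorem \ref{thm:KernelSing}) and the componentwise precedence Definitions \ref{dfn:poTarget}, \ref{dfn:poKernel}, and \ref{dfn:poArtinPattern}, this will yield the asserted comparability along every ancestor-descendant path in $\mathcal{T}$.

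First I would fix a vertex $G\in\mathcal{T}$ and compute, by induction on $n$, the kernel of the composite $\pi^n$ in each of the three cases. In case $(1)$, the crucial point is the identity $\gamma_{c-1}(G/\gamma_c(G)) = \gamma_{c-1}(G)/\gamma_c(G)$, which by a standard isomorphism theorem argument iteratively yields $\pi^n(G) \cong G/\gamma_{c-n+1}(G)$ and hence $\ker(\pi^n)=\gamma_{c-n+1}(G)$. Analogous inductions using the fact that each of the relevant series is stable under quotients by one of its own terms produce $\ker(\pi^n)=P_{\tilde{c}-n}(G)$ in case $(2)$ and $\ker(\pi^n)=G^{(d-n)}$ in case $(3)$.

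Since the root $R$ is abelian, the iteration terminates precisely when $\pi^n(G)=R$, which happens at $n=c-1$ (resp.\ $\tilde c-1$, $d-1$). The maximal kernel that can arise for an ancestor map out of $G$ is therefore $\gamma_2(G)$, $P_1(G)$, or $G^{(1)}$ in the respective cases. Invoking the stated hypothesis on $(U_i)_{i\in I}$, one obtains $\ker(\pi^n) \le U_i$ for every $i \in I$ and every $n$ along the ancestor chain of $G$. Proposition \ref{prp:InvSetBij} then guarantees that the images $V_i := \pi^n(U_i)$ form a family in $\pi^n(G)$ satisfying $(\pi^n)^{-1}(V_i) = U_i$, which confirms that $\pi^n$ is indeed a morphism in the category $\mathcal{G}_{\mathcal{T}}$.

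With these morphisms in place, Theorem \ref{thm:TargetSing} produces the induced epimorphisms $U_i/U_i' \to V_i/V_i'$ giving $V_i/V_i' \preceq U_i/U_i'$ for each $i$, while Theorem \ref{thm:KernelSing} delivers $\pi^n(\ker(T_{G,U_i})) \le \ker(T_{\pi^n(G),V_i})$. Assembled componentwise via Definition \ref{dfn:poArtinPattern}, these give $\tau(\pi^n(G)) \preceq \tau(G)$ and $\varkappa(G) \preceq \varkappa(\pi^n(G))$, which is precisely the comparability asserted. The main obstacle is the inductive identification of $\ker(\pi^n)$ in each of the three characteristic series; once that is pinned down, the remainder reduces to a direct invocation of the singulet machinery, and the abelian-root hypothesis simply serves to bound the length of the ancestor chain so that the maximal kernel is exactly the subgroup singled out in the theorem.
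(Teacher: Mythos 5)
Your proposal is correct and follows essentially the same route as the paper: you identify \(\ker(\pi^n)\) as \(\gamma_{c+1-n}(G)\), \(P_{\tilde{c}-n}(G)\), resp. \(G^{(d-n)}\), observe that the maximal kernel along the ancestor chain down to the abelian root is \(\gamma_2(G)\), \(P_1(G)\), resp. \(G^{(1)}\), and then invoke Theorems \ref{thm:TargetSing} and \ref{thm:KernelSing} componentwise. The extra details you supply (the inductive quotient identification and the verification via Proposition \ref{prp:InvSetBij} that \(\pi^n\) is a morphism of \(\mathcal{G}_{\mathcal{T}}\)) are consistent with, and merely flesh out, the paper's argument.
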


\begin{proof}
If parents are defined by \(\pi(G)=G/\gamma_c(G)\) with \(c=\mathrm{cl}(G)\), then
we have \(\ker(\pi)=\gamma_c(G)\) and \(\ker(\pi^n)=\gamma_{c+1-n}(G)\) for any \(0\le n<c\).
The largest of these kernels arises for \(n=c-1\).
Therefore, uniform comparability of Artin patterns is warranted by the restriction
\(\ker(\pi^{c-1})=\gamma_{2}(G)\le U_i\) for all \(i\in I\).

The parent definition \(\pi(G)=G/P_{c-1}(G)\) with \(c=\mathrm{cl}_p(G)\)
implies \(\ker(\pi)=P_{c-1}(G)\) and \(\ker(\pi^n)=P_{c-n}(G)\) for any \(0\le n<c\).
The largest of these kernels arises for \(n=c-1\).
Consequently, a uniform comparability of Artin patterns is guaranteed by the restriction
\(\ker(\pi^{c-1})=P_{1}(G)\le U_i\) for all \(i\in I\).

Finally, in the case of the parent definition \(\pi(G)=G/G^{(d-1)}\) with \(d=\mathrm{dl}(G)\),
we have \(\ker(\pi)=G^{(d-1)}\) and \(\ker(\pi^n)=G^{(d-n)}\) for any \(0\le n<d\).
The largest of these kernels arises for \(n=d-1\).
Consequently, a uniform comparability of Artin patterns is guaranteed by the condition
\(\ker(\pi^{d-1})=G^{(1)}\le U_i\) for all \(i\in I\).
\end{proof}

%\noindent
Note that the first and third condition coincide
since both, \(G^{(1)}\) and \(\gamma_2(G)\), denote the \textit{commutator subgroup} \(G^\prime\).
So the family \((U_i)_{i\in I}\) is restricted to the \textit{normal subgroups}
which contain \(G^\prime\),
as announced in the paragraph preceding Lemma
\ref{lem:HeadSbg}.

The second condition restricts the family \((U_i)_{i\in I}\)
to the \textit{maximal subgroups} of \(G\)
inclusively the group \(G\) itself and the \textit{Frattini subgroup} \(P_1(G)=\Phi(G)\).

%\newpage

%--------------------------------------------------------------------------------

Since we shall mainly be concerned with the first and third parent definition
for descendant trees, that is, either with respect to the lower central series
or to the derived series, the comparability condition in Theorem
\ref{thm:Comparability}
suggests the definition of a category \(\mathcal{G}_{(h)}\)
whose objects are subject to more severe conditions than those in Formula
(\ref{eqn:ObjGFam}),

\begin{equation}
\label{eqn:ObjGHead}
\mathrm{Obj}(\mathcal{G}_{(h)})=\lbrace (G,(U_i)_{i\in I})\mid
G\in\mathrm{Obj}(\mathcal{G}),\ (G:G^\prime)<\infty,\ G^\prime\le U_i\le G, \text{ for all } i\in I\rbrace,
\end{equation}

\noindent
but whose morphism are defined exactly as in Formula
(\ref{eqn:MorGFam}).
The new viewpoint leads to a
corresponding modification of Artin transfer patterns.

%--------------------------------------------------------------------------------

\begin{definition}
\label{dfn:RstrArtinPattern}
Let \((G,(U_i)_{i\in I})\in\mathrm{Obj}(\mathcal{G}_{(h)})\)
be an object of the category \(\mathcal{G}_{(h)}\).

\noindent
The \textit{Artin pattern}, more precisely the \textit{restricted Artin pattern}, of \(G\) is the pair

\begin{equation}
\label{eqn:RstrArtinPattern}
\mathrm{AP}(G):=\mathrm{AP}_r(G):=\bigl(\tau(G),\varkappa(G)\bigr),
\end{equation}

\noindent
whose components, the TTT and the TKT of \(G\), are defined as in the Formulas
(\ref{eqn:TTT})
and
(\ref{eqn:TKT}),
but now with respect to the smaller system of subgroups of \(G\).

\end{definition}

%\newpage

%--------------------------------------------------------------------------------

The following \textit{Main Theorem} shows that any non-metabelian group \(G\)
with derived length \(\mathrm{dl}(G)\ge 3\) and finite abelianization \(G/G^\prime\)
shares its Artin transfer pattern \(\mathrm{AP}_r(G)=(\tau(G),\varkappa(G))\),
in the \textit{restricted sense},
with its \textit{metabelianization}, that is the second derived quotient \(G/G^{\prime\prime}\).

\begin{theorem}
\label{thm:DrvSer}
\textbf{(Main Theorem.)}
Let \(G\) be a (non-metabelian) group with finite abelianization \(G/G^\prime\),
and denote by \(G^{(n)}\), \(n\ge 0\), the terms of the derived series of \(G\),
that is \(G^{(0)}:=G\) and \(G^{(n+1)}:=\lbrack G^{(n)},G^{(n)}\rbrack\) for \(n\ge 0\),
in particular, \(G^{(1)}=G^\prime\) and \(G^{(2)}=G^{\prime\prime}\),
% and by \(\mathrmfrak{G}_n:=G/G^{(n)}\), \(n\ge 2\), the derived quotients of \(G\),
then

\begin{enumerate}

\item
every subgroup \(U\le G\) which contains the commutator subgroup \(G^\prime\)
is a normal subgroup \(U\unlhd G\) of finite index \(1\le m:=(G:U)<\infty\),

\item
for each \(G^\prime\unlhd U\unlhd G\),
there is a chain of normal subgroups

\begin{equation}
\label{eqn:InclChain}
G^{\prime\prime}\unlhd U^\prime\unlhd G^\prime\unlhd U\unlhd G,
\text{ and } \Bigl(U/G^{\prime\prime}\Bigr)^\prime=U^\prime/G^{\prime\prime},
\end{equation}

\item
for each \(G^\prime\unlhd U\unlhd G\),
the targets of the transfers
\(T_{G,U}:\,G\to U/U^\prime\) and\\
\(T_{G/G^{\prime\prime},U/G^{\prime\prime}}:\,
G/G^{\prime\prime}\to\Bigl(U/G^{\prime\prime}\Bigr)\Bigm/\Bigl(U^\prime/G^{\prime\prime}\Bigr)\)
are equal in the sense of the natural order,

\begin{equation}
\label{eqn:Tau}
U/U^\prime=\Bigl(U/G^{\prime\prime}\Bigr)\Bigm/\Bigl(U^\prime/G^{\prime\prime}\Bigr),
\end{equation}

\item
for each \(G^\prime\unlhd U\unlhd G\),
the kernels of the transfers
\(T_{G,U}:\,G\to U/U^\prime\) and\\
\(T_{G/G^{\prime\prime},U/G^{\prime\prime}}:\,
G/G^{\prime\prime}\to\Bigl(U/G^{\prime\prime}\Bigr)\Bigm/\Bigl(U^\prime/G^{\prime\prime}\Bigr)\)
are equal in the sense of the natural order,

\begin{equation}
\label{eqn:Kappa}
\ker(T_{G,U})=\ker(T_{G/G^{\prime\prime},U/G^{\prime\prime}}).
\end{equation}

\end{enumerate}

\end{theorem}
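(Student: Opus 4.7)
The plan is to dispatch the four claims sequentially, with all the real work funneling into a single structural observation: the natural projection $\varphi:\,G\to G/G^{\prime\prime}$ has $\ker(\varphi)=G^{\prime\prime}$, and this kernel is contained in the derived subgroup $U^\prime$ of every intermediate subgroup $U$ with $G^\prime\le U\le G$. Once that containment is in hand, the machinery of Theorem \ref{thm:TargetSing} and Theorem \ref{thm:KernelSing} delivers (3) and (4) at once.

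For part (1), normality of $U$ is exactly Lemma \ref{lem:HeadSbg}, while finiteness of $(G:U)$ follows from the divisibility $(G:U)\cdot (U:G^\prime)=(G:G^\prime)<\infty$ together with the hypothesis that $G/G^\prime$ is finite. For part (2), I would build the chain one link at a time: $U\unlhd G$ is (1); $G^\prime\unlhd U$ is immediate from $G^\prime\le U$ combined with $G^\prime$ being characteristic in $G$; $U^\prime\unlhd G^\prime$ follows from the monotonicity of the commutator ($U^\prime=[U,U]\le [G,G]=G^\prime$) together with the fact that $U\unlhd G$ forces $U^\prime\unlhd G$; finally $G^{\prime\prime}\unlhd U^\prime$ by the same monotonicity applied once more, namely $G^{\prime\prime}=[G^\prime,G^\prime]\le [U,U]=U^\prime$. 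Having $G^{\prime\prime}\le U^\prime$, the commutator identity in a quotient yields
\[\Bigl(U/G^{\prime\prime}\Bigr)^\prime=[U,U]\cdot G^{\prime\prime}/G^{\prime\prime}=U^\prime\cdot G^{\prime\prime}/G^{\prime\prime}=U^\prime/G^{\prime\prime},\]
which is the claimed formula.

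For part (3), apply Theorem \ref{thm:TargetSing}(3) to the restriction $\varphi\vert_U:\,U\to U/G^{\prime\prime}$. Its kernel $G^{\prime\prime}$ is contained in $U^\prime$ by part (2), so the induced map $\tilde{\varphi}:\,U/U^\prime\to (U/G^{\prime\prime})/(U^\prime/G^{\prime\prime})$ is an isomorphism, which is precisely equality in the sense of Definition \ref{dfn:poTarget}. For part (4), apply Theorem \ref{thm:KernelSing}(2) with the same $\varphi$: the containment $\ker(\varphi)=G^{\prime\prime}\le U^\prime$ triggers the sufficient condition (\ref{eqn:KernelCoincidence}), yielding
\[\ker\bigl(T_{G/G^{\prime\prime},U/G^{\prime\prime}}\bigr)=\varphi\bigl(\ker(T_{G,U})\bigr),\]
which is exactly equality of transfer kernels in the sense of Definition \ref{dfn:poKernel}.

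The only subtle point, and hence the one place where a misstep could matter, is the \emph{uniformity} of the containment $G^{\prime\prime}\le U^\prime$ across all intermediate subgroups $U$ in part (2). This uniformity is what allows the single epimorphism $\varphi:\,G\to G/G^{\prime\prime}$ to induce simultaneously an isomorphism on every derived quotient and an equality on every transfer kernel. Once the inclusion is verified via the monotonicity of the commutator, parts (3) and (4) reduce to direct appeals to the already-established transfer-compatibility results.
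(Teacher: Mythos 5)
Your proposal is correct and follows essentially the same route as the paper: normality and finiteness via Lemma \ref{lem:HeadSbg} and index divisibility, the uniform containment \(G^{\prime\prime}\le U^\prime\) from monotonicity of the commutator, and then Theorems \ref{thm:TargetSing} and \ref{thm:KernelSing} applied to the projection \(G\to G/G^{\prime\prime}\) to get the isomorphism of targets and the coincidence of kernels. The only cosmetic difference is that in part (3) the paper exhibits the isomorphism directly via the map \(xG^{\prime\prime}\mapsto xU^\prime\) and the isomorphism theorem, whereas you invoke Theorem \ref{thm:TargetSing}(3); the paper itself does the same in its proof of part (4), so this is not a substantive divergence.
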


\begin{proof}
We use the natural epimorphism \(\omega:\,G\to G/G^{\prime\prime}\), \(x\mapsto xG^{\prime\prime}\).

\begin{enumerate}

\item
If \(U\) is an intermediate group \(G^\prime\le U\le G\),
then \(U\unlhd G\) is a normal subgroup of \(G\), according to Lemma
\ref{lem:HeadSbg}.
The assumption \(\infty>(G:G^\prime)=(G:U)\cdot (U:G^\prime)\) implies
that \(m:=(G:U)\) is a divisor of the integer \((G:G^\prime)\).
Therefore, the Artin transfer \(T_{G,U}\) exists.

\item
Firstly, \(U\le G\) implies \(U^\prime=\lbrack U,U\rbrack\le\lbrack G,G\rbrack=G^\prime\).
Since \(U^\prime\) is characteristic in \(U\), we also have \(U^\prime\unlhd G^\prime\).
Similarly, \(G^{\prime\prime}\) is characteristic in \(G^\prime\) and thus normal in \(U^\prime\).
Finally, we obtain
\(\Bigl(U/G^{\prime\prime}\Bigr)^\prime=\omega(U)^\prime=\omega(U^\prime)=U^\prime/G^{\prime\prime}\).

\item
The mapping \(f:\,U/G^{\prime\prime}\to U/U^\prime\), \(xG^{\prime\prime}\mapsto xU^\prime\),
is an epimorphism with kernel \(\ker(f)=U^\prime/G^{\prime\prime}\).
Consequently, the isomorphism theorem in Remark
\ref{rmk:FctThrQtn}
of the appendix yields the isomorphism
\(U/U^\prime=\mathrm{im}(f)\simeq\Bigl(U/G^{\prime\prime}\Bigr)\Bigm/\ker(f)
=\Bigl(U/G^{\prime\prime}\Bigr)\Bigm/\Bigl(U^\prime/G^{\prime\prime}\Bigr)\).

\item
Firstly, the restriction \(\omega\vert_U:\,U\to U/G^{\prime\prime}\) is an epimorphism
which induces an isomorphism
\(\tilde{\omega}:\,U/U^\prime\to\Bigl(U/G^{\prime\prime}\Bigr)\Bigm/\Bigl(U^\prime/G^{\prime\prime}\Bigr)\),
since \(\omega(U^\prime)=U^\prime/G^{\prime\prime}\) and
\(\ker(\tilde{\omega})=\omega^{-1}(\omega(U^\prime))/U^\prime
=\Bigl(U^\prime\cdot\ker(\omega)\cap U\Bigr)\Bigm/U^\prime=U^\prime/U^\prime\simeq 1\),
according to Theorem
\ref{thm:TargetSing}.
Secondly, according to Theorem
\ref{thm:KernelSing},
the condition \(\ker(\omega)=G^{\prime\prime}\le U\) implies that the index
\(\Bigl((G/G^{\prime\prime}):(U/G^{\prime\prime})\bigr)=(G:U)=m\) is finite,
the Artin transfer \(T_{G/G^{\prime\prime},U/G^{\prime\prime}}\) exists,
the composite mappings
\(\tilde{\omega}\circ T_{G,U}=T_{G/G^{\prime\prime},U/G^{\prime\prime}}\circ\omega\) commute, and,
since we even have \(\ker(\omega)=G^{\prime\prime}\le U^\prime\),
the transfer kernels satisfy the relation
\(\ker(T_{G/G^{\prime\prime}})=\omega(\ker(T_{G,U}))=\ker(T_{G,U})/G^{\prime\prime}\).
In the sense of the natural partial order on transfer kernels this means equality
\(\ker(T_{G/G^{\prime\prime}})=\ker(T_{G,U})\),
since \(G^\prime\le\ker(T_{G,U})\le G\) and thus,
similarly as in Proposition
\ref{prp:InvSetBij},
the map \(\omega\) establishes a set bijection between the
systems of subgroups \(\mathcal{U}:=\lbrace U\mid G^\prime\le U\le G\rbrace\)
and \(\mathcal{V}:=\lbrace V\mid G^\prime/G^{\prime\prime}\le V\le G/G^{\prime\prime}\rbrace\),
where \(G^\prime/G^{\prime\prime}=\Bigl(G/G^{\prime\prime}\Bigr)^{\prime}\).
\end{enumerate}
\end{proof}

%\newpage

%--------------------------------------------------------------------------------

\begin{remark}
\label{rmk:DrvSer}
At this point it is adequate to emphasize
how similar concepts in previous publications
are related to the concept of Artin patterns.
The \textit{restricted} Artin pattern \(\mathrm{AP}_r(G)\) in Definition
\ref{dfn:RstrArtinPattern}
was essentially introduced in
\cite[Dfn.1.1, p.403]{Ma4},
for a special case already earlier in
\cite[\S\ 1, p.417]{Ma3}.
The name \textit{Artin pattern} appears in
\cite[Dfn.3.1, p.747]{Ma7}
for the first time.
The \textit{complete} Artin pattern \(\mathrm{AP}_c(G)\) in Definition
\ref{dfn:FullArtinPattern}
is new in the present article,
but we should point out that it includes the
iterated IPADs (index-\(p\) abelianization data) in
\cite[Dfn.3.5, p.289]{Ma6}
and the iterated IPODs (index-\(p\) obstruction data) in
\cite[Dfn.4.5]{Ma8}.
\end{remark}

%\newpage

%--------------------------------------------------------------------------------

\noindent
In a second remark, we emphasize the importance of the preceding Main Theorem
for arithmetical applications.

\begin{remark}
\label{rmk:ClsTwr}
In algebraic number theory, Theorem
\ref{thm:DrvSer}
has striking consequences for the determination of
the length \(\ell_p(K)\ge 2\) of the \(p\)-class tower \(\mathrm{F}_p^\infty(K)\),
that is the maximal unramified pro-\(p\) extension,
of an algebraic number field \(K\)
with respect to a given prime number \(p\).
It shows the impossibility of deciding,
exclusively with the aid of the \textit{restricted} Artin pattern \(\mathrm{AP}_r(G)\),
which of several assigned candidates \(G\) with distinct derived lengths \(\mathrm{dl}(G)\ge 2\)
is the actual \(p\)-class tower group \(\mathrm{Gal}(\mathrm{F}_p^\infty(K)\vert K)\).
(In contrast, \(\ell_p(K)=1\) can always be recognized with \(\mathrm{AP}_r(G)\).)

This is the point where the \textit{complete} Artin pattern \(\mathrm{AP}_c(G)\) enters the stage.
Most recent investigations by means of iterated IPADs of \(2^{\mathrm{nd}}\) order,
whose components are contained in \(\mathrm{AP}_c(G)\),
enabled decisions between \(2\le\ell_p(K)\le 3\) in
\cite{Ma6,Ma8}.

Another successful method is to employ cohomological results by I.R. Shafarevich
on the relation rank \(d_2(G)=\dim_{\mathbb{F}_p}\mathrm{H}^2(G,\mathbb{F}_p)\)
for selecting among several candidates \(G\) for the \(p\)-class tower group,
in dependence on the torsion-free unit rank of the base field \(K\),
for instance in
\cite{BuMa,Ma7}.
\end{remark}

\noindent
Important examples for the concepts in \S\
\ref{s:TransferTargetKernel}
are provided in the following subsections.

%\newpage

%--------------------------------------------------------------------------------

\subsection{Abelianization of type \((p,p)\)}
\label{ss:DrvQtnTypePePe}

Let \(G\) be a \(p\)-group with abelianization \(G/G^\prime\) of elementary abelian type \((p,p)\).
Then \(G\) has \(p+1\) maximal subgroups \(H_i<G\) \((1\le i\le p+1)\) of index \((G:H_i)=p\).
For each \(1\le i\le p+1\), let \(T_i:\,G\to H_i/H_i^\prime\) be
the Artin transfer homomorphism from \(G\) to the abelianization of \(H_i\).

\begin{definition}
\label{dfn:DrvQtnTypePePe}

The family of normal subgroups \(\varkappa_H(G)=(\ker(T_i))_{1\le i\le p+1}\) is called
the \textit{transfer kernel type} (TKT) of \(G\) \textit{with respect to} \(H_1,\ldots,H_{p+1}\).

\end{definition}

%--------------------------------------------------------------------------------

\begin{remark}
\label{rmk:OrbitTypePePe}

For brevity, the TKT is identified with the multiplet \((\varkappa(i))_{1\le i\le p+1}\),
whose integer components are given by

\begin{equation}
\label{eqn:TKTPxP}
\varkappa(i)=
\begin{cases}
0 & \text{ if } \ker(T_i)=G,\\
j & \text{ if } \ker(T_i)=H_j \text{ for some } 1\le j\le p+1.
\end{cases}
\end{equation}

\noindent
Here, we take into consideration that each transfer kernel \(\ker(T_i)\)
must contain the commutator subgroup \(G^\prime\) of \(G\),
since the transfer target \(H_i/H_i^\prime\) is abelian.
However, the minimal case \(\ker(T_i)=G^\prime\) cannot occur,
according to Hilbert's Theorem \(94\).

A \textit{renumeration} of the maximal subgroups \(K_i=H_{\pi(i)}\)
and of the transfers \(V_i=T_{\pi(i)}\) by means of a permutation \(\pi\in S_{p+1}\)
gives rise to a new TKT \(\lambda_K(G)=(\ker(V_i))_{1\le i\le p+1}\) with respect to \(K_1,\ldots,K_{p+1}\), identified with \((\lambda(i))_{1\le i\le p+1}\), where

\[\lambda(i)=
\begin{cases}
0 & \text{ if } \ker(V_i)=G,\\
j & \text{ if } \ker(V_i)=K_j \text{ for some } 1\le j\le p+1.
\end{cases}\]

\noindent
It is adequate to view the TKTs \(\lambda_K(G)\sim\varkappa_H(G)\) as \textit{equivalent}. 
Since we have

\begin{center}
\(K_{\lambda(i)}=\ker(V_i)=\ker(T_{\pi(i)})=H_{\varkappa(\pi(i))}=K_{\tilde{\pi}^{-1}(\varkappa(\pi(i)))}\),
\end{center}

\noindent
the relation between \(\lambda\) and \(\varkappa\) is given by
\(\lambda=\tilde{\pi}^{-1}\circ\varkappa\circ\pi\).
Therefore,
\(\lambda\) is another representative of the \textit{orbit} \(\varkappa^{S_{p+1}}\) of \(\varkappa\)
under the operation \((\pi,\mu)\mapsto\tilde{\pi}^{-1}\circ\mu\circ\pi\) of the symmetric group \(S_{p+1}\)
on the set of all mappings from \(\lbrace 1,\ldots,p+1\rbrace\) to \(\lbrace 0,\ldots,p+1\rbrace\),
where the extension \(\tilde{\pi}\in S_{p+2}\) of the permutation \(\pi\in S_{p+1}\)
is defined by \(\tilde{\pi}(0)=0\),
and we formally put \(H_0=G\), \(K_0=G\).

\end{remark}

%--------------------------------------------------------------------------------

\begin{definition}
\label{dfn:OrbitTypePePe}

The orbit \(\varkappa(G)=\varkappa^{S_{p+1}}\) of any representative \(\varkappa\)
is an invariant of the \(p\)-group \(G\) and is called its \textit{transfer kernel type}, briefly TKT.

\end{definition}

%--------------------------------------------------------------------------------

\begin{remark}
\label{dfn:CounterTypePePe}

This definition of \(\varkappa(G)\) goes back to the origins of the \textit{capitulation theory}
and was introduced by Scholz and Taussky for \(p=3\) in \(1934\)
\cite{SoTa}.
Several other authors used this original definition and investigated capitulation problems further.
In historical order, Chang and Foote in \(1980\)
\cite{CgFt},
Heider and Schmithals in \(1982\)
\cite{HeSm},
Brink in \(1984\)
\cite{Br},
Brink and Gold in \(1987\)
\cite{BrGo},
Nebelung in \(1989\)
\cite{Ne},
and ourselves in \(1991\)
\cite{Ma0}
and in \(2012\)
\cite{Ma2}.\\
In the brief form of the TKT \(\varkappa(G)\),
the natural order is expressed by \(i\prec 0\) for \(1\le i\le p+1\).

Let \(\#\mathcal{H}_0(G):=\#\lbrace 1\le i\le p+1\mid\varkappa(i)=0\rbrace\) denote
the \textit{counter of total transfer kernels} \(\ker(T_i)=G\),
which is an invariant of the group \(G\).
In \(1980\), Chang and Foote
\cite{CgFt}
proved that, for any odd prime \(p\) and for any integer \(0\le n\le p+1\),
there exist metabelian \(p\)-groups \(G\) having abelianization \(G/G^\prime\) of type \((p,p)\)
such that \(\#\mathcal{H}_0(G)=n\).
However, for \(p=2\), there do not exist non-abelian \(2\)-groups \(G\) with \(G/G^\prime\simeq (2,2)\),
such that \(\#\mathcal{H}_0(G)\ge 2\).
Such groups must be metabelian of maximal class.
Only the elementary abelian \(2\)-group \(G=C_2\times C_2\) has \(\#\mathcal{H}_0(G)=3\).

\end{remark}

%--------------------------------------------------------------------------------

In the following concrete examples for the counters \(\#\mathcal{H}_0(G)\),
and also in the remainder of this article,
we use identifiers of finite \(p\)-groups in the SmallGroups Library
by Besche, Eick and O'Brien
\cite{BEO1,BEO2}.

\begin{example}
\label{exm:CounterTypePePe}

For \(p=3\), we have the following TKTs \(\varkappa=\varkappa(G)\):

\begin{itemize}

\item
\(\#\mathcal{H}_0(G)=0\) for the extra special group \(G=\langle 27,4\rangle\) of exponent \(9\)
with \(\varkappa=(1,1,1,1)\),

\item
\(\#\mathcal{H}_0(G)=1\) for the two groups \(G\in\lbrace\langle 243,6\rangle,\langle 243,8\rangle\rbrace\)
with \(\varkappa\in\lbrace (0,1,2,2),(2,0,3,4)\rbrace\),

\item
\(\#\mathcal{H}_0(G)=2\) for the group \(G=\langle 243,3\rangle\) with \(\varkappa=(0,0,4,3)\),

\item
\(\#\mathcal{H}_0(G)=3\) for the group \(G=\langle 81,7\rangle\) with \(\varkappa=(2,0,0,0)\),

\item
\(\#\mathcal{H}_0(G)=4\) for the extra special group \(G=\langle 27,3\rangle\) of exponent \(3\)
with \(\varkappa=(0,0,0,0)\).

\end{itemize}

\end{example}

%\newpage

%--------------------------------------------------------------------------------

\subsection{Abelianization of type \((p^2,p)\)}
\label{ss:DrvQtnPe2Pe}

Let \(G\) be a \(p\)-group with abelianization \(G/G^\prime\) of non-elementary abelian type \((p^2,p)\).
Then \(G\) possesses \(p+1\) maximal subgroups \(H_{1,i}<G\) \((1\le i\le p+1)\) of index \((G:H_{1,i})=p\),
and \(p+1\) subgroups \(H_{2,i}<G\) \((1\le i\le p+1)\) of index \((G:H_{2,i})=p^2\).

\noindent
\textbf{Convention.}\\
Suppose that \(H_{1,p+1}=\prod_{j=1}^{p+1}\,H_{2,j}\) is the \textit{distinguished maximal subgroup}
which is the product of all subgroups of index \(p^2\),
and \(H_{2,p+1}=\cap_{j=1}^{p+1}\,H_{1,j}\) is the distinguished subgroup of index \(p^2\)
which is the intersection of all maximal subgroups,
that is the \textit{Frattini subgroup} \(\Phi(G)\) of \(G\).

%--------------------------------------------------------------------------------

\noindent
\textbf{First layer.}\\
For each \(1\le i\le p+1\),
let \(T_{1,i}:\,G\to H_{1,i}/H_{1,i}^\prime\) be the Artin transfer homomorphism
from \(G\) to the abelianization of \(H_{1,i}\).

\begin{definition}
\label{dfn:DrvQtnTypePe2PeLyr1}

The family \(\varkappa_{1,H}(G)=(\ker(T_{1,i}))_{1\le i\le p+1}\) is called
the \textit{first layer transfer kernel type} of \(G\)
with respect to \(H_{1,1},\ldots,H_{1,p+1}\) and \(H_{2,1},\ldots,H_{2,p+1}\),
and is identified with \((\varkappa_1(i))_{1\le i\le p+1}\), where

\begin{equation}
\label{eqn:TKTP2xPLyr1}
\varkappa_1(i)=
\begin{cases}
0 & \text{ if } \ker(T_{1,i})=H_{1,p+1},\\
j & \text{ if } \ker(T_{1,i})=H_{2,j} \text{ for some } 1\le j\le p+1.
\end{cases}
\end{equation}

\end{definition}

\begin{remark}
\label{rmk:DrvQtnTypePe2Pe}

Here, we observe that each first layer transfer kernel
is of exponent \(p\) with respect to \(G^\prime\)
and consequently cannot coincide with \(H_{1,j}\) for any \(1\le j\le p\),
since \(H_{1,j}/G^\prime\) is cyclic of order \(p^2\),
whereas \(H_{1,p+1}/G^\prime\) is bicyclic of type \((p,p)\).

\end{remark}

%--------------------------------------------------------------------------------

\noindent
\textbf{Second layer.}\\
For each \(1\le i\le p+1\),
let \(T_{2,i}:\,G\to H_{2,i}/H_{2,i}^\prime\) be the Artin transfer homomorphism
from \(G\) to the abelianization of \(H_{2,i}\).

\begin{definition}
\label{dfn:DrvQtnTypePe2PeLyr2}

The family \(\varkappa_{2,H}(G)=(\ker(T_{2,i}))_{1\le i\le p+1}\) is called
the \textit{second layer transfer kernel type} of \(G\)
with respect to \(H_{2,1},\ldots,H_{2,p+1}\) and \(H_{1,1},\ldots,H_{1,p+1}\),
and is identified with \((\varkappa_2(i))_{1\le i\le p+1}\), where

\begin{equation}
\label{eqn:TKTP2xPLyr2}
\varkappa_2(i)=
\begin{cases}
0 & \text{ if } \ker(T_{2,i})=G,\\
j & \text{ if } \ker(T_{2,i})=H_{1,j} \text{ for some } 1\le j\le p+1.
\end{cases}
\end{equation}

\end{definition}

\noindent
\textbf{Transfer kernel type.}\\
Combining the information on the two layers,
we obtain the (complete) \textit{transfer kernel type}
\[\varkappa_{H}(G)=(\varkappa_{1,H}(G);\varkappa_{2,H}(G))\]
of the \(p\)-group \(G\) with respect to \(H_{1,1},\ldots,H_{1,p+1}\) and \(H_{2,1},\ldots,H_{2,p+1}\).

\begin{remark}
\label{rmk:OrbitTypePe2Pe}

The distinguished subgroups \(H_{1,p+1}\) and \(H_{2,p+1}=\Phi(G)\) are unique invariants of \(G\)
and should not be renumerated.
However, \textit{independent renumerations}
of the remaining maximal subgroups \(K_{1,i}=H_{1,\tau(i)}\) \((1\le i\le p)\)
and the transfers \(V_{1,i}=T_{1,\tau(i)}\)
by means of a permutation \(\tau\in S_p\),
and of the remaining subgroups \(K_{2,i}=H_{2,\sigma(i)}\) \((1\le i\le p)\) of index \(p^2\)
and the transfers \(V_{2,i}=T_{2,\sigma(i)}\)
by means of a permutation \(\sigma\in S_p\),
give rise to new TKTs
\(\lambda_{1,K}(G)=(\ker(V_{1,i}))_{1\le i\le p+1}\)
with respect to \(K_{1,1},\ldots,K_{1,p+1}\) and \(K_{2,1},\ldots,K_{2,p+1}\),
identified with \((\lambda_1(i))_{1\le i\le p+1}\), where
\[\lambda_1(i)=
\begin{cases}
0 & \text{ if } \ker(V_{1,i})=K_{1,p+1},\\
j & \text{ if } \ker(V_{1,i})=K_{2,j} \text{ for some } 1\le j\le p+1,
\end{cases}\]
and
\(\lambda_{2,K}(G)=(\ker(V_{2,i}))_{1\le i\le p+1}\)
with respect to \(K_{2,1},\ldots,K_{2,p+1}\) and \(K_{1,1},\ldots,K_{1,p+1}\),
identified with \((\lambda_2(i))_{1\le i\le p+1}\), where
\[\lambda_2(i)=
\begin{cases}
0 & \text{ if } \ker(V_{2,i})=G,\\
j & \text{ if } \ker(V_{2,i})=K_{1,j} \text{ for some } 1\le j\le p+1.
\end{cases}\]
It is adequate to view the TKTs
\(\lambda_{1,K}(G)\sim\varkappa_{1,H}(G)\) and \(\lambda_{2,K}(G)\sim\varkappa_{2,H}(G)\)
as \textit{equivalent}.
Since we have
\[K_{2,\lambda_1(i)}=\ker(V_{1,i})=\ker(T_{1,\hat{\tau}(i)})=
H_{2,\varkappa_1(\hat{\tau}(i))}=K_{2,\tilde{\sigma}^{-1}(\varkappa_1(\hat{\tau}(i)))},\]
resp.
\[K_{1,\lambda_2(i)}=\ker(V_{2,i})=\ker(T_{2,\hat{\sigma}(i)})=
H_{1,\varkappa_2(\hat{\sigma}(i))}=K_{1,\tilde{\tau}^{-1}(\varkappa_2(\hat{\sigma}(i)))},\]
the relations between \(\lambda_1\) and \(\varkappa_1\), resp. \(\lambda_2\) and \(\varkappa_2\),
are given by \(\lambda_1=\tilde{\sigma}^{-1}\circ\varkappa_1\circ\hat{\tau}\),
resp. \(\lambda_2=\tilde{\tau}^{-1}\circ\varkappa_2\circ\hat{\sigma}\).
Therefore, \(\lambda=(\lambda_1,\lambda_2)\) is another representative
of the \textit{orbit} \(\varkappa^{S_p\times S_p}\) of \(\varkappa=(\varkappa_1,\varkappa_2)\)
under the \textit{operation}
\[((\sigma,\tau),(\mu_1,\mu_2))\mapsto
(\tilde{\sigma}^{-1}\circ\mu_1\circ\hat\tau,\tilde{\tau}^{-1}\circ\mu_2\circ\hat\sigma)\]
of the product of two symmetric groups \(S_p\times S_p\)
on the set of all pairs of mappings from \(\lbrace 1,\ldots,p+1\rbrace\) to \(\lbrace 0,\ldots,p+1\rbrace\),
where the extensions \(\hat{\pi}\in S_{p+1}\) and \(\tilde{\pi}\in S_{p+2}\) of a permutation \(\pi\in S_p\) are defined by \(\hat{\pi}(p+1)=\tilde{\pi}(p+1)=p+1\) and \(\tilde{\pi}(0)=0\),
and formally\\
\(H_{1,0}=K_{1,0}=G\), \(K_{1,p+1}=H_{1,p+1}\), \(H_{2,0}=K_{2,0}=H_{1,p+1}\),
and \(K_{2,p+1}=H_{2,p+1}=\Phi(G)\).

\end{remark}

\begin{definition}
\label{dfn:OrbitTypePe2Pe}

The orbit \(\varkappa(G)=\varkappa^{S_p\times S_p}\)
of any representative \(\varkappa=(\varkappa_1,\varkappa_2)\)
is an invariant of the \(p\)-group \(G\) and is called
its \textit{transfer kernel type}, briefly TKT.

\end{definition}

%--------------------------------------------------------------------------------

\noindent
\textbf{Connections between layers.}\\
The Artin transfer \(T_{2,i}:\,G\to H_{2,i}/H_{2,i}^\prime\)
from \(G\) to a subgroup \(H_{2,i}\) of index \((G:H_{2,i})=p^2\) (\(1\le i\le p+1\))
is the compositum \(T_{2,i}=\tilde{T}_{H_{1,j},H_{2,i}}\circ T_{1,j}\)
of the \textit{induced transfer}
\(\tilde{T}_{H_j,U_i}:\,H_{1,j}/H_{1,j}^\prime\to H_{2,i}/H_{2,i}^\prime\) from \(H_{1,j}\) to \(H_{2,i}\)
(in the sense of Corollary
\ref{cor:FctThrQtn}
or Corollary
\ref{cor:Abelianization}
in the appendix)
and the Artin transfer \(T_{1,j}:\,G\to H_{1,j}/H_{1,j}^\prime\) from \(G\) to \(H_{1,j}\),
for any intermediate subgroup \(H_{2,i}<H_{1,j}<G\) of index \((G:H_{1,j})=p\) (\(1\le j\le p+1\)).
There occur two situations:

\begin{itemize}

\item
For the subgroups \(H_{2,1},\ldots,H_{2,p}\)
only the distinguished maximal subgroup \(H_{1,p+1}\) is an intermediate subgroup.

\item
For the Frattini subgroup \(H_{2,p+1}=\Phi(G)\)
all maximal subgroups \(H_{1,1},\ldots,H_{1,p+1}\) are intermediate subgroups.

\end{itemize}

\noindent
This causes restrictions for the transfer kernel type \(\varkappa_2(G)\) of the second layer,\\
since \(\ker(T_{2,i})=\ker(\tilde{T}_{H_{1,j},H_{2,i}}\circ T_{1,j})\supset\ker(T_{1,j})\), and thus

\begin{itemize}

\item
\(\ker(T_{2,i})\supset\ker(T_{1,p+1})\), for all \(1\le i\le p\),

\item
but even \(\ker(T_{2,p+1})\supset\langle\cup_{j=1}^{p+1}\,\ker(T_{1,j})\rangle\).

\end{itemize}
\noindent
Furthermore, when \(G=\langle x,y\rangle\) with \(x^p\notin G^\prime\) and \(y^p\in G^\prime\),
an element \(xy^{k-1}\) (\(1\le k\le p\)) which is of order \(p^2\) with respect to \(G^\prime\),
can belong to the transfer kernel \(\ker(T_{2,i})\)
only if its \(p\)th power \(x^p\) is contained in \(\ker(T_{1,j})\),
for all intermediate subgroups \(H_{2,i}<H_{1,j}<G\), and thus:

\begin{itemize}

\item
\(xy^{k-1}\in\ker(T_{2,i})\), for certain \(1\le i,k\le p\),
enforces the first layer TKT singulet \(\varkappa_1(p+1)=p+1\),

\item
but \(xy^{k-1}\in\ker(T_{2,p+1})\), for some \(1\le k\le p\),
even specifies the complete first layer TKT multiplet \(\varkappa_1=((p+1)^{p+1})\),
that is \(\varkappa_1(j)=p+1\), for all \(1\le j\le p+1\).

\end{itemize}

%\newpage

%--------------------------------------------------------------------------------

\section{Stabilization and polarization in descendant trees}
\label{s:StbAndPol}

Theorem
\ref{thm:DrvSer}
has proved that it suffices to get an overview of the \textit{restricted} Artin patterns
of \textit{metabelian} groups \(G\) with \(\mathrm{dl}(G)=2\),
since groups \(G\) of derived length \(\mathrm{dl}(G)\ge 3\) will certainly reveal
exactly the same patterns as their metabelianizations \(G/G^{\prime\prime}\).

In this section, we present the \textit{complete theory}
of stabilization and polarization of the restricted Artin patterns
for an extensive exemplary case, namely
for all metabelian \(3\)-groups \(G\) with abelianization \(G/G^\prime\) of type \((3,3)\).

Since the bottom layer, resp. the top layer, of the restricted Artin pattern
will be considered in Theorem
\ref{thm:TypeCommLCAndDrvSer}
on the commutator subgroup \(G^\prime\),
resp. Theorem
\ref{thm:TypeAbLCAndDrvSer}
on the entire group \(G\),
we first focus on the \textit{intermediate layer} \((\tau_1,\varkappa_1)\)
of the maximal subgroups \(G^\prime<U_i<G\).

%--------------------------------------------------------------------------------

\subsection{\(3\)-groups of non-maximal class}
\label{ss:NonMaxClass}

We begin with groups \(G\) of \textit{non-maximal class}.
Denoting by \(m\) the index of nilpotency of \(G\),
we let \(\chi_j(G)\) with \(2\le j\le m-1\)
be the centralizers
of two-step factor groups \(\gamma_j(G)/\gamma_{j+2}(G)\)
of the lower central series, that is,
the biggest subgroups of \(G\) with the property
\(\lbrack\chi_j(G),\gamma_j(G)\rbrack\le\gamma_{j+2}(G)\).
They form an ascending chain of characteristic subgroups of \(G\),
\(G^\prime\le\chi_2(G)\le\ldots\le\chi_{m-2}(G)<\chi_{m-1}(G)=G\),
which contain the commutator subgroup.
\(\chi_j(G)\) coincides with \(G\) if and only if \(j\ge m-1\).
We characterize the smallest \textit{two-step centralizer}
different from the commutator group
by an \textit{isomorphism invariant}
\(s=s(G)=\min\lbrace 2\le j\le m-1\mid\chi_j(G)>G^\prime\rbrace\).
According to Nebelung
\cite{Ne},
we can assume that \(G\) has order \(\lvert G\rvert=3^n\),
class \(\mathrm{cl}(G)=c=m-1\ge 3\), and coclass \(\mathrm{cc}(G)=r=e-1=n-c\ge 2\),
where \(4\le m<n\le 2m-3\).
Let generators of \(G=\langle x,y\rangle\) be selected such that
\(\gamma_3(G)=\langle y^3,x^3,\gamma_4(G)\rangle\),
\(x\in G\setminus\chi_s(G)\), if \(s<m-1\),
and \(y\in\chi_s(G)\setminus G^\prime\).
Suppose that a fixed ordering of the four maximal subgroups of \(G\) is defined by
\(U_i=\langle g_i,G^\prime\rangle\) with
\(g_1=y\), \(g_2=x\), \(g_3=xy\), and \(g_4=xy^{-1}\).
Let the \textit{main commutator} of \(G\) be declared by
\(s_2=t_2=\lbrack y,x\rbrack\in\gamma_2(G)=G^\prime\)
and \textit{higher commutators} recursively by
\(s_j=\lbrack s_{j-1},x\rbrack\), \(t_j=\lbrack t_{j-1},y\rbrack\in\gamma_j(G)\)
for \(j\ge 3\).
Starting with the powers \(\sigma_3=y^3\), \(\tau_3=x^3\in\gamma_3(G)\), let
\(\sigma_j=\lbrack\sigma_{j-1},x\rbrack\), \(\tau_j=\lbrack\tau_{j-1},y\rbrack\in\gamma_j(G)\)
for \(j\ge 4\),
and put \(\Sigma_4=\langle\sigma_4,\ldots,\sigma_{m-1}\rangle\),
\(T_4=\langle\tau_4,\ldots\tau_{e+1}\rangle\).

\begin{theorem}
\label{thm:NonMaxClass}
\textbf{(Non-maximal class.)}
Let \(G\) be a metabelian \(3\)-group
of nilpotency class \(\mathrm{cl}(G)=c=m-1\ge 4\) and coclass \(\mathrm{cc}(G)=r=e-1\ge 2\)
with abelianization \(G/G^\prime\simeq (3,3)\).
With respect to the projection \(\pi(G)=G/\gamma_c(G)\) onto the parent,
the restricted Artin pattern \(\mathrm{AP}_r(G)=(\tau(G),\varkappa(G))\) of \(G\) reveals

\begin{enumerate}

\item
a \(\mathrm{bipolarization}\) and partial stabilization,
if \(G\) is an \(\mathrm{interface\ group}\) with
bicyclic last lower central equal to the bicyclic first upper central,
more precisely

\begin{equation}
\label{eqn:BiPol}
\mathrm{Pol}_{\pi}(G)=\lbrace 1,2\rbrace,\ \mathrm{Stb}_{\pi}(G)=\lbrace 3,4\rbrace
\quad\Longleftrightarrow\quad
\gamma_{m-1}(G)=\langle\sigma_{m-1},\tau_{e}\rangle=\zeta_1(G),
\end{equation}

\item
a \(\mathrm{unipolarization}\) and partial stabilization,
if \(G\) is a \(\mathrm{core\ group}\) with
cyclic last lower central and bicyclic first upper central,
more precisely

\begin{equation}
\label{eqn:UniPol}
\mathrm{Pol}_{\pi}(G)=\lbrace 1\rbrace,\ \mathrm{Stb}_{\pi}(G)=\lbrace 2,3,4\rbrace
\quad\Longleftrightarrow\quad
\gamma_{m-1}(G)=\langle\sigma_{m-1}\rangle<\zeta_1(G)=\langle\sigma_{m-1},\tau_{e}\rangle,
\end{equation}

\item
a \(\mathrm{nilpolarization}\) and \(\mathrm{total\ stabilization}\)
if \(G\) is a \(\mathrm{core\ group}\) with
cyclic last lower central equal to the cyclic first upper central.

\begin{equation}
\label{eqn:NilPol}
\mathrm{Pol}_{\pi}(G)=\emptyset,\ \mathrm{Stb}_{\pi}(G)=\lbrace 1,2,3,4\rbrace
\quad\Longleftrightarrow\quad
\gamma_{m-1}(G)=\langle\sigma_{m-1}\rangle=\langle\tau_{e+1}\rangle=\zeta_1(G).
\end{equation}

\end{enumerate}

\end{theorem}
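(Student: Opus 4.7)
The plan is to reduce the theorem to a purely module-theoretic computation inside the abelian commutator subgroup $G^\prime$. By Definition~\ref{dfn:StbPol} applied to the morphism $\pi\colon G\to G/\gamma_{m-1}(G)$, we have $i\in\mathrm{Stb}_\pi(G)$ iff $\ker(\pi)=\gamma_{m-1}(G)\le U_i^\prime$, and $i\in\mathrm{Pol}_\pi(G)$ otherwise. Since $\mathrm{Stb}_\pi$ and $\mathrm{Pol}_\pi$ partition $\{1,2,3,4\}$, it therefore suffices to determine, for each $i\in\{1,2,3,4\}$, whether the chosen generators of $\gamma_{m-1}(G)$ ($\sigma_{m-1}$ alone, or $\sigma_{m-1}$ and $\tau_e$) belong to $U_i^\prime$.

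First I would compute the four derived subgroups $U_i^\prime$ via the metabelian reduction. Writing $U_i=\langle g_i,G^\prime\rangle$ with $g_1=y$, $g_2=x$, $g_3=xy$, $g_4=xy^{-1}$, and using that $G^\prime$ is abelian, I obtain $U_i^\prime=[G^\prime,g_i]$, which in the $\mathbb{Z}_3[G/G^\prime]$-module notation for $G^\prime$ reads $U_i^\prime=(g_i-1)G^\prime$. The expansions
\begin{equation*}
xy-1=(x-1)+(y-1)+(x-1)(y-1),\qquad xy^{-1}-1=(x-1)-(y-1)-(x-1)(y-1)y^{-1},
\end{equation*}
together with the observation that every element of $\gamma_{m-1}(G)$ is annihilated by $x-1$ and $y-1$ (since $\gamma_m(G)=1$), will show that the cross-term $(x-1)(y-1)G^\prime$ lies in $\gamma_m(G)=1$ modulo $\gamma_{m-1}(G)$. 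Consequently, on the level of $\gamma_{m-1}(G)$ one has the clean decomposition $U_3^\prime\supseteq U_1^\prime\cdot U_2^\prime\cap\gamma_{m-1}(G)$ and similarly for $U_4^\prime$; in particular, any element of $\gamma_{m-1}(G)$ that lies in both $U_1^\prime$ and $U_2^\prime$ automatically lies in $U_3^\prime$ and $U_4^\prime$.

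With these tools, the three cases follow by inspecting where the two generators $\sigma_{m-1}=(x-1)^{m-3}\sigma_3$ and $\tau_e=(y-1)^{e-2}\tau_3$ land. Unconditionally, $\sigma_{m-1}\in U_2^\prime$ and $\tau_e\in U_1^\prime$. In case (iii), the hypothesis $\langle\sigma_{m-1}\rangle=\langle\tau_{e+1}\rangle$ with $\tau_{e+1}=(y-1)\tau_e\in U_1^\prime$ forces $\sigma_{m-1}\in U_1^\prime$; together with $\sigma_{m-1}\in U_2^\prime$ and the cross-term vanishing, $\gamma_{m-1}(G)\subseteq U_i^\prime$ for all $i$, giving total stabilization. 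In case (ii), the strict inclusion $\langle\sigma_{m-1}\rangle<\langle\sigma_{m-1},\tau_e\rangle=\zeta_1(G)$ together with $\tau_{e+1}=1$ shows $\sigma_{m-1}\notin U_1^\prime=(y-1)G^\prime$ (otherwise $\sigma_{m-1}=(y-1)s$ would extend the cyclic $\gamma_{m-1}$ to contain an additional $\tau_e$-component, contradicting cyclicity), while $\sigma_{m-1}\in U_2^\prime$ and the cross-term vanishing place $\sigma_{m-1}$ in $U_3^\prime$ and $U_4^\prime$. In case (i), the bicyclic equality $\gamma_{m-1}(G)=\zeta_1(G)=\langle\sigma_{m-1},\tau_e\rangle$ shows by symmetry $\sigma_{m-1}\notin U_1^\prime$ and $\tau_e\notin U_2^\prime$, so $\{1,2\}\subseteq\mathrm{Pol}_\pi(G)$; yet both generators of $\gamma_{m-1}(G)$ are simultaneously contained in $U_3^\prime$ and $U_4^\prime$ by the decoupling identity above, yielding $\{3,4\}\subseteq\mathrm{Stb}_\pi(G)$. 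Each case can then be reversed to produce the ``if and only if'' statements by reading the inclusions of $\sigma_{m-1}$ and $\tau_e$ in the appropriate $U_i^\prime$.

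The principal obstacle will be the rigorous proof that $\sigma_{m-1}\notin U_1^\prime$ in cases (i) and (ii) and that $\tau_e\notin U_2^\prime$ in case (i); the argument requires the full classification of Nebelung \cite{Ne} of metabelian $3$-groups with abelianization $(3,3)$ and coclass $r\ge 2$, in order to control the precise $\mathbb{Z}_3[G/G^\prime]$-module structure of $G^\prime/\gamma_m(G)$ and rule out parasitic relations. Once this non-membership is established, the verification of the cross-term vanishing in $(xy\pm 1)G^\prime$ at the level of $\gamma_{m-1}$ is a short commutator-collection computation using $\gamma_m(G)=1$, and the partition of $\{1,2,3,4\}$ into stabilized and polarized indices drops out directly from Definition~\ref{dfn:StbPol}.
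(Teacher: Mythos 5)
Your overall reduction coincides with the paper's: by Definition \ref{dfn:StbPol}, applied via Theorems \ref{thm:TargetSing} and \ref{thm:KernelSing}, everything comes down to deciding, for each \(i\), whether \(\ker(\pi)=\gamma_{m-1}(G)\) is contained in \(U_i^\prime=(g_i-1)G^\prime\), and your identification \(U_i^\prime=(g_i-1)G^\prime\) for metabelian \(G\) is correct. The paper settles the containments by importing the explicit structural result \cite[Cor.\,3.2]{Ma1}: \(U_1^\prime=\langle t_3,\tau_4,\ldots,\tau_{e+1}\rangle\), \(U_2^\prime=\langle s_3,\sigma_4,\ldots,\sigma_{m-1}\rangle\), \(U_3^\prime=\langle s_3t_3,\gamma_4(G)\rangle\), \(U_4^\prime=\langle s_3t_3^{-1},\gamma_4(G)\rangle\), together with the case distinction for \(\ker(\pi)\); stabilization of \(i=3,4\) is then immediate from \(\gamma_{m-1}(G)=\gamma_c(G)\le\gamma_4(G)\le U_3^\prime\cap U_4^\prime\), using \(c\ge 4\).

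The gap lies in your substitute for exactly this step. First, the asserted cross-term vanishing is false as stated: \((x-1)(y-1)G^\prime\) lies in \(\gamma_4(G)\), not in \(\gamma_{m-1}(G)\), so for \(c\ge 5\) it does not die ``modulo \(\gamma_{m-1}(G)\)''; what is true is that the cross term annihilates \(\gamma_{m-2}(G)\), but that only yields \((xy-1)\gamma_{m-2}(G)=\bigl((x-1)+(y-1)\bigr)\gamma_{m-2}(G)\subseteq U_3^\prime\), which is in general a proper subgroup of \(\gamma_{m-1}(G)=(x-1)\gamma_{m-2}(G)\cdot(y-1)\gamma_{m-2}(G)\). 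Second, even granting your decoupling principle (an element of \(\gamma_{m-1}(G)\) lying in both \(U_1^\prime\) and \(U_2^\prime\) lies in \(U_3^\prime\) and \(U_4^\prime\)), it cannot deliver case (1): there \(\sigma_{m-1}\notin U_1^\prime\) and \(\tau_e\notin U_2^\prime\) --- which is precisely why \(1,2\in\mathrm{Pol}_\pi(G)\) --- yet the theorem asserts \(3,4\in\mathrm{Stb}_\pi(G)\), so the needed containment \(\langle\sigma_{m-1},\tau_e\rangle\le U_3^\prime\cap U_4^\prime\) is not of the form your principle covers. The true ingredient, \(\gamma_4(G)\le U_3^\prime,U_4^\prime\), is a structural fact about the groups in Nebelung's classification (it fails, for instance, in free metabelian quotients), not a consequence of \(\gamma_m(G)=1\) and commutator collection alone; so this part, as well as the non-memberships \(\sigma_{m-1}\notin U_1^\prime\) and \(\tau_e\notin U_2^\prime\) that you already deferred, must be drawn from \cite{Ma1} and \cite{Ne} --- which is exactly how the paper's proof proceeds.
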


%--------------------------------------------------------------------------------

\begin{proof}
Theorems
\ref{thm:TargetSing}
and
\ref{thm:KernelSing}
tell us that
for detecting whether stabilization occurs from parent \(\pi(G)\) to child \(G\),
we have to compare the projection kernel \(\ker(\pi)\)
with the commutator subgroups \(U_i^\prime\)
of the four maximal normal subgroups \(U_i\lhd G\), \(1\le i\le 4\).
According to
\cite[Cor.3.2, p.480]{Ma1}
these derived subgroups are given by

\begin{equation}
\label{eqn:CommSbgOfMaxSbg}
\begin{aligned}
U_1^\prime &= \langle t_3,\tau_4,\ldots,\tau_{e+1}\rangle,\\
U_2^\prime &= \langle s_3,\sigma_4,\ldots,\sigma_{m-1}\rangle,\\
U_3^\prime &= \langle s_3t_3,\gamma_4(G)\rangle,\\
U_4^\prime &= \langle s_3t_3^{-1},\gamma_4(G)\rangle,
\end{aligned}
\end{equation}

\noindent
provided the generators of \(G\) are selected as indicated above.
On the other hand, the projection kernel \(\ker(\pi)=\gamma_{m-1}(G)\) is given by

\begin{equation}
\label{eqn:PrjKer}
\ker(\pi)=
\begin{cases}
\langle\sigma_{m-1},\tau_{e}\rangle & \text{ if } e=m-1, \\
\langle\sigma_{m-1}\rangle & \text{ if } e<m-1,\ \zeta_1(G)=\langle\sigma_{m-1},\tau_{e}\rangle, \\
\langle\sigma_{m-1}\rangle=\langle\tau_{e+1}\rangle & \text{ if } e<m-1,\ \zeta_1(G)=\langle\sigma_{m-1}\rangle.
\end{cases}
\end{equation}

\noindent
Combining this information with \(m-1=c\), we obtain the following results.

\begin{itemize}

\item
\(\ker(\pi)=\gamma_{c}(G)\le\gamma_4(G)\subset U_i^\prime\)
for \(3\le i\le 4\) if \(c\ge 4\), independently of \(3\le e\le m-1\).

\item
\(\ker(\pi)=\langle\sigma_{m-1},\tau_{e}\rangle\not\subseteq U_i^\prime\)
for \(1\le i\le 2\) if \(e=m-1\), which implies \(\Sigma_4\cap T_4=1\).

\item
\(\ker(\pi)=\langle\sigma_{m-1}\rangle\subset U_2^\prime\) but
\(\ker(\pi)=\langle\sigma_{m-1}\rangle\not\subseteq U_1^\prime\)
if \(e<m-1\), \(\zeta_1(G)=\langle\sigma_{m-1},\tau_{e}\rangle\),
which also implies \(\Sigma_4\cap T_4=1\).

\item
\(\ker(\pi)=\langle\sigma_{m-1}\rangle\subset U_i^\prime\)
for \(1\le i\le 2\) if \(e<m-1\), \(\zeta_1(G)=\langle\sigma_{m-1}\rangle\),
which implies \(\Sigma_4\cap T_4=\langle\sigma_{m-1}\rangle=\langle\tau_{e+1}\rangle\).

\end{itemize}

\noindent
Taken together, these results justify all claims.
\end{proof}

%--------------------------------------------------------------------------------

\begin{example}
\label{exm:NonMaxClass}
Generally, the parent \(\pi(G)\) of an \textit{interface} group \(G\)
\cite[Dfn.3.3, p.430]{Ma4}
with bicyclic last non-trivial lower central
is a vertex of a different coclass graph with lower coclass.
In the case of a \textit{bipolarization}
\cite[Dfn.3.2, p.430]{Ma4},
which is now also characterized via its Artin pattern by Formula
(\ref{eqn:BiPol})
for \(p=3\),
we can express the membership in coclass graphs by the implication:
If \(G\in\mathcal{G}(p,r)\) with \(r\ge 3\), then \(\pi(G)\in\mathcal{G}(p,r-1)\).
A typical example is the group \(G=\langle 2187,77\rangle\) of coclass \(3\)
with parent \(\pi(G)=\langle 243,3\rangle\prec G\) of coclass \(2\), where
\[\tau(\pi(G))=
\lbrack (9,3),(9,3),(3,3,3),(3,3,3)\rbrack\prec\tau(G)=\lbrack (9,9),(9,9),(3,3,3),(3,3,3)\rbrack\]
and
\[\varkappa(G)=(3,4,4,3)\prec\varkappa(\pi(G))=(0,0,4,3).\]

\noindent
In contrast, a \textit{core} group \(G\)
\cite[Dfn.3.3, p.430]{Ma4}
with cyclic last non-trivial lower central
and its parent \(\pi(G)\)
are vertices of the same coclass graph.
In dependence on the \(p\)-rank of its centre \(\zeta_1(G)\),
the Artin pattern either shows a \textit{unipolarization} as in Formula
(\ref{eqn:UniPol}),
if the centre is bicyclic, or a \textit{total stabilization} as in Formula
(\ref{eqn:NilPol}),
if the centre is cyclic.
Typical examples are 
the group \(G=\langle 2187,304\rangle\)
with parent \(\pi(G)=\langle 729,54\rangle\prec G\) both of coclass \(2\),
where the Artin pattern shows a unipolarization
\[\tau(\pi(G))=\lbrack (9,9),(9,3),(9,3),(9,3)\rbrack\prec\tau(G)=\lbrack (27,9),(9,3),(9,3),(9,3)\rbrack\]
and
\[\varkappa(G)=(1,2,3,1)\prec\varkappa(\pi(G))=(0,2,3,1),\]
and the group \(G=\langle 729,45\rangle\)
with parent \(\pi(G)=\langle 243,4\rangle\prec G\) both of coclass \(2\),
where the Artin pattern shows a total stabilization
\[\tau(\pi(G))=\lbrack (3,3,3),(3,3,3),(9,3),(3,3,3)\rbrack=\tau(G)\]
and
\[\varkappa(G)=(4,4,4,3)=\varkappa(\pi(G)).\]
\end{example}

%--------------------------------------------------------------------------------

\subsection{\(p\)-groups of maximal class}
\label{ss:MaxClass}

Next we consider \(p\)-groups of \textit{maximal class}, that is, of coclass \(\mathrm{cc}(G)=1\),
but now for an arbitrary prime number \(p\ge 2\).
According to Blackburn
\cite{Bl}
and Miech
\cite{Mi},
we can assume that \(G\) is a metabelian \(p\)-group
of order \(\lvert G\rvert=p^n\) and
nilpotency class \(\mathrm{cl}(G)=c=m-1\), where \(n=m\ge 3\).
Then \(G\) is of coclass \(\mathrm{cc}(G)=r=n-c=1\)
and the commutator factor group \(G/G^\prime\) of \(G\) is of type \((p,p)\).
The lower central series of \(G\) is defined
recursively by \(\gamma_1(G)=G\) and
\(\gamma_j(G)=\lbrack\gamma_{j-1}(G),G\rbrack\) for \(j\ge 2\),
in particular \(\gamma_2(G)=G^\prime\).

The \textit{centralizer}
\(\chi_2(G)
=\lbrace g\in G\mid\lbrack g,u\rbrack\in\gamma_4(G)\text{ for all }u\in\gamma_2(G)\rbrace\)
of the two-step factor group \(\gamma_2(G)/\gamma_4(G)\), that is,
\[\chi_2(G)/\gamma_4(G)
=\mathrm{Centralizer}_{G/\gamma_4(G)}(\gamma_2(G)/\gamma_4(G)),\]
is the biggest subgroup of \(G\) such that
\(\lbrack\chi_2(G),\gamma_2(G)\rbrack\le\gamma_4(G)\).
It is characteristic, contains the commutator subgroup \(G^\prime\), and
coincides with \(G\), if and only if \(m=3\).
Let the \textit{isomorphism invariant} \(k=k(G)\) of \(G\) be defined by
\[\lbrack\chi_2(G),\gamma_2(G)\rbrack=\gamma_{m-k}(G),\]
where \(k=0\) for \(3\le m\le 4\), \(0\le k\le m-4\) for \(m\ge 5\),
and \(0\le k\le\min(m-4,p-2)\) for \(m\ge p+1\),
according to Miech
\cite[p.331]{Mi}.

Suppose that generators of \(G=\langle x,y\rangle\) are selected such that
\(x\in G\setminus\chi_2(G)\), if \(m\ge 4\), and \(y\in\chi_2(G)\setminus G^\prime\).
We define the \textit{main commutator}
\(s_2=\lbrack y,x\rbrack\in\gamma_2(G)\)
and the \textit{higher commutators}
\(s_j=\lbrack s_{j-1},x\rbrack=s_{j-1}^{x-1}\in\gamma_j(G)\) for \(j\ge 3\).

The maximal subgroups \(U_i\) of \(G\)
contain the commutator subgroup \(G^\prime\) of \(G\)
as a normal subgroup of index \(p\) and thus
are of the shape \(U_i=\langle g_i,G^\prime\rangle\).
We define a fixed ordering by
\(g_1=y\) and \(g_i=xy^{i-2}\) for \(2\le i\le p+1\).

\begin{theorem}
\label{thm:MaxClass}
\textbf{(Maximal class.)}
Let \(G\) be a metabelian \(p\)-group
of nilpotency class \(\mathrm{cl}(G)=c=m-1\ge 3\) and coclass \(\mathrm{cc}(G)=r=e-1=1\),
which automatically implies an abelianization \(G/G^\prime\) of type \((p,p)\).
With respect to the projection \(\pi(G)=G/\gamma_c(G)\) onto the parent,
the restricted Artin pattern \(\mathrm{AP}_r(G)=(\tau(G),\varkappa(G))\) of \(G\) reveals

\begin{enumerate}

\item
a \(\mathrm{unipolarization}\) and partial stabilization,
if the first maximal subgroup \(U_1\) of \(G\) is abelian,
more precisely

\begin{equation}
\label{eqn:UniPolMax}
\mathrm{Pol}_{\pi}(G)=\lbrace 1\rbrace,\ \mathrm{Stb}_{\pi}(G)=\lbrace 2,\ldots,p+1\rbrace
\quad\Longleftrightarrow\quad
U_1^\prime=1,\ k=0,
\end{equation}

\item
a \(\mathrm{nilpolarization}\) and \(\mathrm{total\ stabilization}\)
if all four maximal subgroups \(U_i\) of \(G\) are non-abelian,
more precisely

\begin{equation}
\label{eqn:NilPolMax}
\mathrm{Pol}_{\pi}(G)=\emptyset,\ \mathrm{Stb}_{\pi}(G)=\lbrace 1,\ldots,p+1\rbrace
\Longleftrightarrow
U_1^\prime=\langle s_{m-k},\ldots,s_{m-1}\rangle=\gamma_{m-k}(G),\ k\ge 1.
\end{equation}

\end{enumerate}

\noindent
In both cases,
the commutator subgroups of the other maximal normal subgroups of \(G\) are given by

\begin{equation}
\label{eqn:}
U_i^\prime=\langle s_3,\ldots,s_{m-1}\rangle=\gamma_{3}(G) \text{ for } 2\le i\le p+1.
\end{equation}

\end{theorem}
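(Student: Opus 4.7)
The plan is to mimic the template of the proof of Theorem \ref{thm:NonMaxClass}, which rests on Theorems \ref{thm:TargetSing} and \ref{thm:KernelSing}: stabilization, resp.\ polarization, at index $i$ is detected by the comparison between the projection kernel $\ker(\pi)=\gamma_{m-1}(G)$ and the derived subgroup $U_i^\prime$ of the $i$-th maximal subgroup. So the proof reduces to two ingredients: identifying $\ker(\pi)$ explicitly, and computing the $U_i^\prime$ for $1\le i\le p+1$.

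First I would pin down $\ker(\pi)$. Since $G$ has coclass $1$, the last non-trivial term $\gamma_{m-1}(G)$ is cyclic of order $p$, generated by $s_{m-1}$, and lies in $\zeta_1(G)$; in particular $\ker(\pi)=\langle s_{m-1}\rangle$ is a cyclic group of order $p$ contained in every normal subgroup of order at least $p$ that meets the centre. Next I would compute the commutator subgroups using the metabelian hypothesis $G^{\prime\prime}=1$, which gives $U_i^\prime=[U_i,U_i]=[\langle g_i\rangle,G^\prime]$ because $[G^\prime,G^\prime]=1$. For $i=2$, $g_2=x$ and the definition of the $s_j$ yields $[x,s_j]=s_{j+1}$ for all $j\ge 2$, so $U_2^\prime=\langle s_3,\ldots,s_{m-1}\rangle=\gamma_3(G)$. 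For $3\le i\le p+1$ the generator is $g_i=xy^{i-2}$, and using $G^{\prime\prime}=1$ one expands $[xy^{i-2},s_j]=[x,s_j]\cdot [y,s_j]^{i-2}=s_{j+1}\cdot (\text{term in }\gamma_{j+m-k-2}(G))$, where the correction lies in a strictly deeper term of the lower central series because $y\in\chi_2(G)$. Consequently $U_i^\prime=\gamma_3(G)$ for $2\le i\le p+1$, which proves the last assertion of the theorem. In particular $s_{m-1}\in\gamma_{m-1}(G)\subseteq\gamma_3(G)=U_i^\prime$, so $\ker(\pi)\le U_i^\prime$ for every $i\ge 2$, and by Theorem \ref{thm:TargetSing} and Theorem \ref{thm:KernelSing} the indices $2,\ldots,p+1$ always belong to $\mathrm{Stb}_{\pi}(G)$.

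The decisive case is $i=1$. Here $g_1=y\in\chi_2(G)$, so $U_1^\prime=[\langle y\rangle,G^\prime]$ is controlled by the isomorphism invariant $k$ via $[\chi_2(G),\gamma_2(G)]=\gamma_{m-k}(G)$. A short induction on $j$ shows $[y,\gamma_j(G)]\le\gamma_{j+m-k-2}(G)$, and combined with the generators $s_2,\ldots,s_{m-1}$ of $G^\prime$ this yields $U_1^\prime=\langle s_{m-k},\ldots,s_{m-1}\rangle=\gamma_{m-k}(G)$. Thus $U_1^\prime=1$ if and only if $k=0$, whereas for $k\ge 1$ one has $\gamma_{m-1}(G)\le\gamma_{m-k}(G)=U_1^\prime$. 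Inserting this into the stabilization criterion of Theorem \ref{thm:KernelSing} and the polarization criterion of Theorem \ref{thm:TargetSing} finishes both cases: $k=0$ gives $\ker(\pi)\not\le U_1^\prime$ and hence $1\in\mathrm{Pol}_{\pi}(G)$, which is the unipolarization (\ref{eqn:UniPolMax}); $k\ge 1$ gives $\ker(\pi)\le U_1^\prime$ and hence $1\in\mathrm{Stb}_{\pi}(G)$, producing the total stabilization (\ref{eqn:NilPolMax}).

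The main obstacle I anticipate is the computation $U_1^\prime=\gamma_{m-k}(G)$ when $k\ge 1$, because one needs both the upper bound $[y,G^\prime]\le\gamma_{m-k}(G)$ (which is immediate from $y\in\chi_2(G)$ and the definition of $k$) and the matching lower bound, i.e.\ the fact that the iterated commutators $[y,s_j]$ actually exhaust $\gamma_{m-k}(G)$. This requires the uniserial structure of the lower central quotients below $\gamma_{m-k}(G)$, which follows from the classification results of Blackburn \cite{Bl} and the detailed Miech parametrization \cite[p.\ 331]{Mi}; once this is in place, everything else is routine bookkeeping with commutator identities in the metabelian quotient $G/G^{\prime\prime}=G$.
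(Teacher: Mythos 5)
Your proposal is correct and follows essentially the same route as the paper: identify \(\ker(\pi)=\gamma_{m-1}(G)=\langle s_{m-1}\rangle\), compare it with the derived subgroups \(U_i^\prime\) of the \(p+1\) maximal subgroups, and conclude via Theorems \ref{thm:TargetSing} and \ref{thm:KernelSing}. The only difference is that the paper simply quotes the formulas \(U_1^\prime=\gamma_{m-k}(G)\) and \(U_i^\prime=\gamma_3(G)\) for \(2\le i\le p+1\) from \cite[Cor.3.1, p.476]{Ma1}, whereas you sketch their derivation directly (appealing to Blackburn \cite{Bl} and Miech \cite{Mi} for the lower bound and uniserial structure), which is precisely the content of that citation.
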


%--------------------------------------------------------------------------------

\begin{proof}
We proceed in the same way as in the proof of Theorem
\ref{thm:NonMaxClass}
and compare the projection kernel \(\ker(\pi)\)
with the commutator subgroups \(U_i^\prime\)
of the \(p+1\) maximal normal subgroups \(U_i\lhd G\), \(1\le i\le p+1\).
According to
\cite[Cor.3.1, p.476]{Ma1}
they are given by

\begin{equation}
\label{eqn:CommSbgOfMaxSbgMax}
\begin{aligned}
U_1^\prime &= \langle s_{m-k},\ldots,s_{m-1}\rangle=\gamma_{m-k}(G),\ k\ge 0,\\
U_i^\prime &= \langle s_3,\ldots,s_{m-1}\rangle=\gamma_{3}(G) \text{ for } 2\le i\le p+1.
\end{aligned}
\end{equation}

\noindent
if the generators of \(G\) are chosen as indicated previously.
The cyclic projection kernel is given uniformly by

\begin{equation}
\label{eqn:PrjKerMax}
\ker(\pi)=\gamma_{m-1}(G)=\langle s_{m-1}\rangle=\zeta_1(G).
\end{equation}

\noindent
Using the relation \(m-1=c\), we obtain the following results.

\begin{itemize}

\item
\(\ker(\pi)=\gamma_{c}(G)\le\gamma_3(G)=U_i^\prime\)
for \(2\le i\le p+1\) if \(c\ge 3\).

\item
\(\ker(\pi)=\gamma_{m-1}(G)\le\gamma_{m-k}(G)=U_1^\prime\)
if and only if \(m-1\ge m-k\), that is, \(k\ge 1\).

\end{itemize}

\noindent
The claims follow by applying Theorems
\ref{thm:TargetSing}
and
\ref{thm:KernelSing}.
\end{proof}

%--------------------------------------------------------------------------------

\begin{example}
\label{exm:MaxClass}
For \(p=5\), typical examples are 
the group \(G=\langle 625,8\rangle\)
with parent \(\pi(G)=\langle 125,3\rangle\prec G\) both of coclass \(1\),
where the Artin pattern shows a \textit{unipolarization}
\cite[Dfn.3.1, p.413]{Ma4}
\[\tau(\pi(G))=\lbrack (5,5,5),(5,5),(5,5),(5,5),(5,5),(5,5)\rbrack
\prec\tau(G)=\lbrack (5,5),(5,5),(5,5),(5,5),(5,5),(5,5)\rbrack\]
and
\[\varkappa(G)=(1,0,0,0,0,0)\prec\varkappa(\pi(G))=(0,0,0,0,0,0),\]
and the group \(G=\langle 3125,33\rangle\)
with parent \(\pi(G)=\langle 625,7\rangle\prec G\) both of coclass \(1\),
where the Artin pattern shows a total stabilization
\[\tau(\pi(G))=\lbrack (5,5,5),(5,5),(5,5),(5,5),(5,5),(5,5)\rbrack=\tau(G)\]
and
\[\varkappa(G)=(0,0,0,0,0,0)=\varkappa(\pi(G)).\]
\end{example}

%--------------------------------------------------------------------------------

\subsection{Extreme interfaces of \(p\)-groups}
\label{ss:Extreme}

Finally,
what can be said about the \textit{extreme} cases
of non-abelian \(p\)-groups having the smallest possible nilpotency class
\(c=3\) for coclass \(r\ge 2\) and
\(c=2\) for coclass \(r=1\)?
In these particular situations,
the answers can be given for arbitrary prime numbers \(p\ge 2\).

\begin{theorem}
\label{thm:Extreme}
Let \(G\) be a metabelian \(p\)-group with abelianization \(G/G^\prime\) of type \((p,p)\).

\begin{enumerate}

\item
If \(G\) is of coclass \(\mathrm{cc}(G)\ge 2\) and nilpotency class \(\mathrm{cl}(G)=3\),
then \(p\ge 3\) must be odd and the coclass must be \(\mathrm{cc}(G)=2\) exactly.

\item
If \(G\) is of coclass \(\mathrm{cc}(G)=1\) and nilpotency class \(\mathrm{cl}(G)=2\),
then \(G\) is an extra special \(p\)-group of order \(p^3\) and exponent \(p\) or \(p^2\).

\end{enumerate}

\noindent
In both cases,
there occurs a \(\mathrm{total\ polarization}\) and no stabilization at all,
more explicitly

\begin{equation}
\label{eqn:TotPol}
\mathrm{Pol}_{\pi}(G)=\lbrace 1,\ldots,p+1\rbrace,\ \mathrm{Stb}_{\pi}(G)=\emptyset.
\end{equation}

\end{theorem}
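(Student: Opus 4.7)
The plan is to handle the two structural claims separately and then to deduce total polarization uniformly from Theorems~\ref{thm:TargetSing} and~\ref{thm:KernelSing} by comparing the projection kernel \(\ker(\pi)=\gamma_c(G)\) with the derived subgroups \(U_i^\prime\) of the \(p+1\) maximal subgroups of \(G\). In case (2), the hypotheses \(\mathrm{cc}(G)=1\) and \(\mathrm{cl}(G)=2\) force \(\lvert G\rvert=p^{c+r}=p^3\), and since \(\lvert G/G^\prime\rvert=p^2\) we obtain \(\lvert G^\prime\rvert=p\) with \(G^\prime=\gamma_2(G)\le\zeta_1(G)\); so \(G\) is extra special of order \(p^3\), and the classical classification of these groups yields the exponent dichotomy \(p\) or \(p^2\).

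For case (1), I would first exclude \(p=2\) by Taussky's theorem: every non-abelian \(2\)-group with abelianization of type \((2,2)\) is of maximal class, hence of coclass one, contradicting \(\mathrm{cc}(G)\ge 2\). For odd \(p\), the coclass is pinned down to \(2\) via commutator calculus in the metabelian class-\(3\) setting. Two-generation of \(G\) makes \(\gamma_2/\gamma_3\) cyclic, and the identity \(\lbrack v,a^n\rbrack=\lbrack v,a\rbrack^n\) valid for \(v\in\gamma_2\) in class \(3\) (obtained by expanding \(\lbrack v,a^n\rbrack=\prod_{i=0}^{n-1}\lbrack v,a\rbrack^{a^i}\) and noting that the conjugates collapse modulo \(\gamma_4=1\)) yields \(s_3^p=\lbrack s_2,x^p\rbrack\in\lbrack\gamma_2,\gamma_2\rbrack=1\) and similarly \(t_3^p=1\); hence \(\gamma_3=\langle s_3,t_3\rangle\) is elementary abelian of rank at most \(2\) and order at most \(p^2\). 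A collected expansion of \(\lbrack y^p,x\rbrack\) then produces \(\lbrack y^p,x\rbrack=s_2^p\cdot t_3^{p(p-1)/2}\), whose second factor vanishes precisely when \(p\) is odd. Since the left-hand side lies in \(\gamma_3\), we obtain \(s_2^p\in\gamma_3\), i.e.\ \(\lvert\gamma_2/\gamma_3\rvert=p\). Combined, \(\lvert G^\prime\rvert\le p^3\) and \(\mathrm{cc}(G)\le 2\); equality then follows from \(\mathrm{cc}(G)\ge 2\).

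For total polarization I compute each \(U_i^\prime\) explicitly from the generators \(g_1=y\), \(g_i=xy^{i-2}\) for \(2\le i\le p+1\). In case (2), every \(U_i\) has order \(p^2\) and is therefore abelian, so \(U_i^\prime=1\); but \(\ker(\pi)=G^\prime\ne 1\), whence \(\ker(\pi)\not\le U_i^\prime\) for every \(i\). In case (1), metabelianity together with the identities above gives \(U_i^\prime=\lbrack G^\prime,g_i\rbrack=\langle\lbrack s_2,g_i\rbrack\rangle\), which evaluates to \(\langle t_3\rangle\) for \(i=1\) and \(\langle s_3 t_3^{i-2}\rangle\) for \(2\le i\le p+1\); each of these is cyclic of order at most \(p\). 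On the other hand \(\ker(\pi)=\gamma_3(G)=\langle s_3,t_3\rangle\) is elementary abelian of order exactly \(p^2\), so it cannot lie inside any cyclic group of order at most \(p\). In both cases the failure \(\ker(\pi)\not\le U_i^\prime\) for every index triggers the polarization clauses of Theorems~\ref{thm:TargetSing} and~\ref{thm:KernelSing}, giving \(\mathrm{Pol}_\pi(G)=\lbrace 1,\ldots,p+1\rbrace\) and \(\mathrm{Stb}_\pi(G)=\emptyset\). The principal obstacle is the class-\(3\) identity \(\lbrack y^p,x\rbrack=s_2^p\cdot t_3^{p(p-1)/2}\) for arbitrary odd \(p\); the explicit derived-subgroup formulas in Theorem~\ref{thm:NonMaxClass} are recorded only for \(p=3\) after Nebelung, so the general variant requires careful iterated commutator collection in a metabelian class-\(3\) group, with the arithmetic remark \(p\mid\binom{p}{2}\) for odd \(p\) entering once more to reinforce the exclusion of the even prime.
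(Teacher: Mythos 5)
Your proof is correct, and its skeleton coincides with the paper's: exclude \(p=2\) by Taussky's theorem \cite{Ta}, pin the coclass down to \(2\), then decide polarization by comparing \(\ker(\pi)=\gamma_c(G)\) with the derived subgroups \(U_i^\prime\) of the \(p+1\) maximal subgroups and invoking Theorems \ref{thm:TargetSing} and \ref{thm:KernelSing} (equivalently, Definition \ref{dfn:StbPol}). The genuine difference is in how the key inputs are obtained. The paper cites the minimal-class-for-coclass bound to force \(\mathrm{cc}(G)=2\), cites \cite[Lem.3.1, p.446]{Ma4} for the cyclicity of order \(p\) of the \(U_i^\prime\) and \cite[\S\ 3.5, p.445]{Ma4} for \(\gamma_3(G)\simeq(p,p)\) in case (1), and \cite[Cor.3.1, p.476]{Ma1} for \(U_i^\prime=1\) in case (2); you instead derive all of this by direct commutator collection: the class-\(3\) identities \(\lbrack v,a^n\rbrack=\lbrack v,a\rbrack^n\) for \(v\in\gamma_2\), the exponent-\(p\) property \(s_3^p=t_3^p=1\) via metabelianity and \(x^p,y^p\in G^\prime\), the collection formula \(\lbrack y^p,x\rbrack=s_2^p\,t_3^{\binom{p}{2}}\) giving \(s_2^p\in\gamma_3\) for odd \(p\) and hence \(\lvert\gamma_2/\gamma_3\rvert=p\) and \(\mathrm{cc}(G)\le 2\), and \(U_1^\prime=\langle t_3\rangle\), \(U_i^\prime=\langle s_3t_3^{i-2}\rangle\) from \(U_i^\prime=\lbrack G^\prime,g_i\rbrack\). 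You also prove the extraspecial statement in case (2), which the paper asserts without detail. Your route buys a self-contained treatment valid for every odd prime, independent of the tabulated results of Nebelung and \cite{Ma1,Ma4}; the paper's route buys brevity. Two small points you should state explicitly, though both follow at once from what you have: \(\lvert\gamma_3(G)\rvert=p^2\) exactly (not merely \(\le p^2\)) only after \(\mathrm{cc}(G)=2\) is in hand, since then \(\lvert G\rvert=p^5\), \(\lvert G/\gamma_2\rvert=p^2\) and \(\lvert\gamma_2/\gamma_3\rvert=p\); and \(s_2\notin\gamma_3\) (so \(\lvert\gamma_2/\gamma_3\rvert\) is exactly \(p\)), because \(\gamma_2=\gamma_3\) would collapse the lower central series and contradict \(\mathrm{cl}(G)=3\). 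Note finally that for the polarization itself only the inequality \(\lvert U_i^\prime\rvert\le p<p^2=\lvert\gamma_3(G)\rvert\) is needed, so the precise generators of the \(U_i^\prime\) are a convenience rather than a necessity.
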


\begin{proof}
Suppose that \(G\) is a metabelian \(p\)-group with \(G/G^\prime\simeq (p,p)\).

\begin{enumerate}

\item
According to O. Taussky
\cite{Ta},
a \(2\)-group \(G\) with abelianization \(G/G^\prime\) of type \((2,2)\)
must be of coclass \(\mathrm{cc}(G)=1\).
Consequently, \(\mathrm{cc}(G)\ge 2\) implies \(p\ge 3\).

Since the minimal nilpotency class \(c\) of a non-abelian group with coclass \(r\ge 1\)
is given by \(c=r+1\),
the case \(\mathrm{cl}(G)=3\) cannot occur for \(\mathrm{cc}(G)\ge 3\).

So we are considering metabelian \(p\)-groups \(G\) with \(G/G^\prime\simeq (p,p)\),
nilpotency class \(\mathrm{cl}(G)=3\) and coclass \(\mathrm{cc}(G)=2\) for odd \(p\ge 3\),
which form the stem of the isoclinism family \(\Phi_6\) in the sense of P. Hall.
According to
\cite[Lem.3.1, p.446]{Ma4},
the commutator subgroups \(U_i^\prime\) of the maximal subgroups \(U_i<G\)
are cyclic of degree \(p\), for such a group \(G\in\Phi_6(0)\).
However, the kernel of the parent projection \(\pi\) is the bicyclic group
\(\ker(\pi)=\gamma_3(G)\) of type \((p,p)\)
\cite[\S\ 3.5, p.445]{Ma4},
which cannot be contained in any of the cyclic \(U_i^\prime\) with \(1\le i\le p+1\).

\item
According to
\cite[Cor.3.1, p.476]{Ma1},
the commutator subgroups \(U_i^\prime\) of all maximal subgroups \(U_i<G\) are trivial,
for a metabelian \(p\)-group \(G\)
of coclass \(\mathrm{cc}(G)=1\) and nilpotency class \(m-1=c=\mathrm{cl}(G)=2\),
which implies \(k=0\).
Thus, the kernel of the parent projection \(\ker(\pi)=\gamma_2(G)=\langle s_2\rangle\)
is not contained in any \(U_i^\prime=1\).

\end{enumerate}

\noindent
In both cases,
the final claim is a consequence of the Theorems
\ref{thm:TargetSing}
and
\ref{thm:KernelSing}.
\end{proof}

%--------------------------------------------------------------------------------

\begin{example}
\label{exm:Extreme}
For \(p=3\), a typical example
for the interface between groups of coclass \(2\) and \(1\) is
and the group \(G=\langle 243,9\rangle\) of coclass \(2\)
with parent \(\pi(G)=\langle 27,3\rangle\prec G\) of coclass \(1\),
where the Artin pattern shows a total polarization
\[\tau(\pi(G))=\lbrack (9,3),(9,3),(9,3),(9,3)\rbrack\prec\lbrack (3,3),(3,3),(3,3),(3,3)\rbrack=\tau(G)\]
and
\[\varkappa(G)=(2,1,4,3)\prec\varkappa(\pi(G))=(0,0,0,0).\]
For \(p=2\), a typical example
for the interface between non-abelian and abelian groups is 
the extra special quaternion group \(Q=\langle 8,4\rangle\)
with parent \(\pi(Q)=\langle 4,2\rangle\prec Q\) both of coclass \(1\),
where the Artin pattern shows a total polarization
\[\tau(\pi(Q))=\lbrack (2),(2),(2)\rbrack\prec\tau(Q)=\lbrack (4),(4),(4)\rbrack\]
and
\[\varkappa(Q)=(1,2,3)\prec\varkappa(\pi(Q))=(0,0,0).\]
\end{example}

Summarizing, we can say that the last three Theorems
\ref{thm:NonMaxClass},
\ref{thm:MaxClass},
and
\ref{thm:Extreme}
underpin the fact that
Artin transfer patterns provide a marvellous tool for classifying finite \(p\)-groups.

%--------------------------------------------------------------------------------

\subsection{Bottom and top layer of the Artin pattern}
\label{ss:BottomTop}

We conclude this section with supplementary general results concerning
the \textit{bottom layer} and \textit{top layer} of the restricted Artin pattern.

\begin{theorem}
\label{thm:TypeCommLCAndDrvSer}
(Bottom layer.)
The type of the commutator subgroup \(G^\prime\)
can never remain stable for a \(\mathrm{metabelian}\) vertex \(G\in\mathcal{T}\)
of a descendant tree \(\mathcal{T}\) with respect to
the lower central series or the lower exponent-\(p\) central series or the derived series.
\end{theorem}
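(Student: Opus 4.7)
The plan is to pinpoint the bottom layer of the restricted Artin pattern $\mathrm{AP}_r(G)$ as the index $i_0 \in I$ with $U_{i_0} = G^\prime$ and to apply Theorem \ref{thm:TargetSing} together with Definition \ref{dfn:StbPol} at this specific index. The transfer target at the bottom layer is $U_{i_0}/U_{i_0}^\prime = G^\prime/(G^\prime)^\prime = G^\prime/G^{\prime\prime}$, and since $G$ is metabelian by hypothesis we have $G^{\prime\prime} = 1$, so $U_{i_0}^\prime = 1$ and the target collapses to $G^\prime$ itself.

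Next I would specialize the stabilization criterion to this collapsed situation. By Definition \ref{dfn:StbPol}, the bottom index $i_0$ lies in $\mathrm{Stb}_{\pi}(G)$ if and only if $\ker(\pi) \le U_{i_0}^\prime = G^{\prime\prime} = 1$, i.e., if and only if the parent projection $\pi$ is injective. So the entire theorem reduces to the observation that $\pi$ has nontrivial kernel in each of the three ancestry conventions.

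The verification then proceeds case by case. For the lower central series parent $\pi(G) = G/\gamma_c(G)$, we have $\ker(\pi) = \gamma_c(G) \neq 1$ by the very definition of the nilpotency class $c = \mathrm{cl}(G) \ge 2$. For the lower exponent-$p$ central series parent $\pi(G) = G/P_{\tilde{c}-1}(G)$, we likewise have $\ker(\pi) = P_{\tilde{c}-1}(G) \neq 1$ by the definition of $\tilde{c} = \mathrm{cl}_p(G)$. For the derived series parent $\pi(G) = G/G^{(d-1)}$, metabelicity forces $d = \mathrm{dl}(G) = 2$, so $\ker(\pi) = G^{(1)} = G^\prime$, which is nontrivial because $G$ is non-abelian (it is a non-root vertex of a tree whose root is abelian). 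In every case $\ker(\pi) \neq 1 = G^{\prime\prime}$, so $i_0 \in \mathrm{Pol}_{\pi}(G)$, never in $\mathrm{Stb}_{\pi}(G)$.

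I foresee no genuine obstacle. The whole argument rests on the single observation that the metabelian identity $G^{\prime\prime} = 1$ trivializes the derived subgroup of $U_{i_0} = G^\prime$, and this in turn collapses the stabilization criterion at the bottom layer into the impossible demand that the parent projection be injective; the three parent conventions differ only in the choice of the (always nontrivial) kernel.
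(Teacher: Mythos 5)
Your proposal is correct and follows essentially the same route as the paper: the paper's proof likewise observes that each of the three parent kernels $\gamma_c(G)$, $P_{\tilde{c}-1}(G)$, $G^{(d-1)}$ is nontrivial and hence cannot be contained in the trivial second derived subgroup $G^{\prime\prime}=U_{i_0}^\prime$, so stability at the bottom layer fails by Theorem \ref{thm:TargetSing}. Your version merely makes the reduction via Definition \ref{dfn:StbPol} and the case $d=2$ for the derived series explicit, which the paper leaves implicit.
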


\begin{proof}
All possible kernels \(\ker(\pi)=\gamma_c(G)>1\),
resp. \(\ker(\pi)=\mathrm{P}_{c-1}(G)>1\),
resp. \(\ker(\pi)=G^{(d-1)}>1\),
of the parent projections \(\pi\) are non trivial,
and can therefore never be contained in the trivial second derived subgroup \(G^{\prime\prime}\).
According to Theorem
\ref{thm:TargetSing},
the type of the commutator subgroup \(G^\prime\) cannot be stable.
\end{proof}

%--------------------------------------------------------------------------------

\begin{example}
\label{exm:TypeComm}

In Example
\ref{exm:NonMaxClass},
we pointed out that the group \(G=\langle 729,45\rangle\) with cyclic centre
and its parent \(\pi(G)=\langle 243,4\rangle\prec G\), both of coclass \(2\),
cannot be distinguished by their TTT
\[\tau_1(\pi(G))=\lbrack (3,3,3),(3,3,3),(9,3),(3,3,3)\rbrack=\tau_1(G)\]
and TKT
\[\varkappa_1(G)=(4,4,4,3)=\varkappa_1(\pi(G)),\]
due to a \textit{total stabilization} of the restricted Artin pattern
as in Formula
(\ref{eqn:NilPol}).
However, the type of their commutator subgroup (the second layer of their TTT)
admits a distinction, since
\[\tau_2(\pi(G))=\lbrack (3,3,3)\rbrack\prec\lbrack (9,3,3)\rbrack=\tau_2(G).\]

\end{example}

%--------------------------------------------------------------------------------

\begin{theorem}
\label{thm:TypeAbLCAndDrvSer}
(Top layer.)
In a descendant tree \(\mathcal{T}\) with respect to the lower central series or derived series,
the type of the abelianization \(G/G^\prime\)
of the vertices \(G\in\mathcal{T}\) remains stable.
\end{theorem}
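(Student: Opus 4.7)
The plan is to reduce the statement to Theorem \ref{thm:TargetSing} applied in the top layer, that is, with the subgroup $U$ taken to be the whole group $G$ itself. Under the parent projection $\pi: G\to\pi(G)$, the hypothesis of the third part of Theorem \ref{thm:TargetSing} requires only that $\ker(\pi)\le U^\prime = G^\prime$; once this is verified, the induced map $\tilde{\pi}: G/G^\prime \to \pi(G)/\pi(G)^\prime$ is an isomorphism, which is exactly the claim that the abelianization type remains stable along every edge of $\mathcal{T}$, and hence along every path.

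First I would handle the lower central series definition of the parent, $\pi(G)=G/\gamma_c(G)$ with $c=\mathrm{cl}(G)$. For any vertex $G\in\mathcal{T}$ admitting a parent at all we have $c\ge 2$ (otherwise $G$ is abelian and has no proper parent in this tree), so
\[
\ker(\pi)=\gamma_c(G)\le\gamma_2(G)=G^\prime,
\]
which is precisely condition \eqref{eqn:IndIso} with $U=G$. By Theorem \ref{thm:TargetSing}, the induced epimorphism $\tilde{\pi}$ is an isomorphism, giving $\tau_0(\pi(G))=\tau_0(G)$ in the top layer.

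Next I would treat the derived series definition, $\pi(G)=G/G^{(d-1)}$ with $d=\mathrm{dl}(G)$. Again, any vertex with a proper parent must satisfy $d\ge 2$, and then
\[
\ker(\pi)=G^{(d-1)}\le G^{(1)}=G^\prime,
\]
so Theorem \ref{thm:TargetSing} applies verbatim and yields $G/G^\prime\simeq\pi(G)/\pi(G)^\prime$. Iterating the edge result along any finite descending path $G\to\pi(G)\to\pi^2(G)\to\cdots$ gives stability of the abelianization type throughout the tree.

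The argument contains no real obstacle; the main thing to notice is simply that the excluded case of the lower exponent-$p$ central series is excluded because there $\ker(\pi)=P_{\tilde{c}-1}(G)$ need not lie in $G^\prime$ (for instance, $P_1(G)=G^p\cdot G^\prime$ may strictly contain $G^\prime$ when $G$ has elements of order larger than $p$ outside $G^\prime$), so the hypothesis $\ker(\pi)\le G^\prime$ of Theorem \ref{thm:TargetSing} fails in general and the top-layer TTT may genuinely change, in parallel with the bottom-layer phenomenon recorded in Theorem \ref{thm:TypeCommLCAndDrvSer}.
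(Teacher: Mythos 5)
Your argument is correct and follows essentially the same route as the paper: both reduce the claim to Theorem \ref{thm:TargetSing} with \(U=G\), observing that the kernel of the parent projection, \(\gamma_c(G)\le\gamma_2(G)=G^\prime\) resp. \(G^{(d-1)}\le G^{(1)}=G^\prime\), lies in the commutator subgroup, so the induced map on abelianizations is an isomorphism. Your edge-by-edge iteration and the paper's appeal to the maximal possible kernel \(\ker(\pi^n)\le G^\prime\) are the same observation in slightly different packaging, and your closing remark on the lower exponent-\(p\) central series correctly explains the exclusion treated separately in Theorem \ref{thm:RankAbLpCSer}.
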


\begin{proof}
This follows from Theorem
\ref{thm:TargetSing},
since even the maximal possible kernel \(\ker(\pi)=\gamma_2(G)=G^\prime\),
resp. \(\ker(\pi)=G^{(1)}=G^\prime\),
of the parent projections \(\pi\)
is contained in the commutator subgroup \(G^\prime\) of \(G\).
\end{proof}

%--------------------------------------------------------------------------------

\noindent
We briefly emphasize the different behaviour of trees where parents are defined
with the lower exponent-\(p\) central series.

\begin{theorem}
\label{thm:RankAbLpCSer}
In a descendant tree \(\mathcal{T}\) with respect to the lower exponent-\(p\) central series,
only the \(p\)-rank of the abelianization \(r_p(G/G^\prime)\)
of the vertices \(G\in\mathcal{T}\) remains stable.
\end{theorem}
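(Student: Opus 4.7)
The plan is to apply Theorem \ref{thm:TargetSing} to the natural epimorphism $\omega=\pi_G:\,G\to G/P_{\tilde{c}-1}(G)=\pi(G)$ with kernel $\ker(\omega)=P_{\tilde{c}-1}(G)$, and then to exploit monotonicity of the lower exponent-$p$ central series to pin down exactly what is lost on the abelianization.

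First I would invoke $\omega(G^\prime)=G^\prime\cdot P_{\tilde{c}-1}(G)/P_{\tilde{c}-1}(G)=\pi(G)^\prime$ together with the formulas (\ref{eqn:ImgIndEpi}) and (\ref{eqn:KerIndEpi}) to identify
\[
\pi(G)/\pi(G)^\prime\;\simeq\;G\bigm/\bigl(G^\prime\cdot P_{\tilde{c}-1}(G)\bigr),
\]
so that the induced epimorphism $\tilde\omega:\,G/G^\prime\to\pi(G)/\pi(G)^\prime$ has kernel $(G^\prime\cdot P_{\tilde{c}-1}(G))/G^\prime$. Next I would insert the chain of inclusions
\[
G^\prime\;\le\;G^\prime\cdot P_{\tilde{c}-1}(G)\;\le\;G^\prime\cdot G^p\;=\;P_1(G),
\]
which holds for $\tilde{c}\ge 2$ because the lower exponent-$p$ central series is descending and thus $P_{\tilde{c}-1}(G)\le P_1(G)$. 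This places $\ker(\tilde\omega)$ inside the subgroup of $p$-th powers $P_1(G)/G^\prime=(G/G^\prime)^p$, so the induced map on Frattini quotients is an isomorphism: both $G/P_1(G)$ and $\pi(G)/P_1(\pi(G))$ are canonically identified with $G/(G^\prime\cdot G^p)$, using $P_1(\pi(G))=P_1(G)/P_{\tilde{c}-1}(G)$. By the Burnside basis theorem,
\[
r_p(\pi(G)/\pi(G)^\prime)=d(\pi(G))=\dim_{\mathbb{F}_p}\bigl(G/P_1(G)\bigr)=d(G)=r_p(G/G^\prime),
\]
establishing the asserted stability of the $p$-rank.

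To see that \emph{only} the $p$-rank — and not the full abelian type — is preserved, I would observe that $\ker(\tilde\omega)$ is non-trivial whenever $P_{\tilde{c}-1}(G)\not\le G^\prime$. In that generic situation the kernel meets a non-trivial $p$-th power in some cyclic summand $C_{p^k}$ of $G/G^\prime$ with $k\ge 2$ and strictly lowers its exponent in $\pi(G)/\pi(G)^\prime$. A simple witness is any $p$-group with $G/G^\prime$ of type $(p^2,p)$ whose parent with respect to the lower exponent-$p$ central series has abelianization of type $(p,p)$.

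The hard part, such as it is, is the preliminary identity $P_1(\pi(G))=P_1(G)/P_{\tilde{c}-1}(G)$ — that $P_1$ commutes with quotients by subgroups of $P_1$; once this is in hand, the rest is a direct application of Theorem \ref{thm:TargetSing} combined with the elementary abelian quotient interpretation of the $p$-rank. The only delicate edge case is $\tilde{c}=1$, where $\pi(G)$ is the trivial root of the tree and the statement is vacuously true.
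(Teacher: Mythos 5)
Your proposal is correct, but it takes a genuinely different route from the paper. The paper's proof stays at the level of subgroup lattices: since the kernel of the parent projection for the lower exponent-$p$ central series is contained in the Frattini subgroup $P_1(G)=\Phi(G)$, which lies in every maximal subgroup, Proposition \ref{prp:InvSetBij} gives a bijection between the maximal subgroups of the child $G$ and of the parent $\pi(G)$; counting them as $\frac{p^{\varrho}-1}{p-1}$ then forces $r_p(\pi(G)/\pi(G)^\prime)=\varrho=r_p(G/G^\prime)$. You instead compute the induced epimorphism $\tilde{\omega}:\,G/G^\prime\to\pi(G)/\pi(G)^\prime$ explicitly via Theorem \ref{thm:TargetSing}, identify its kernel as $\bigl(G^\prime\cdot P_{\tilde{c}-1}(G)\bigr)/G^\prime$, observe that this kernel sits inside $P_1(G)/G^\prime$, and pass to Frattini quotients using $P_1(\pi(G))=\pi(P_1(G))=P_1(G)/P_{\tilde{c}-1}(G)$ (which is exactly the identity $\phi(P_n(G))=P_n(\phi(G))$ established in the proof of Corollary \ref{cor:CharSbg}) together with Burnside's basis theorem. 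Your route buys more information: it exhibits precisely what is lost on the abelianization (a subgroup of the $p$-th powers $P_1(G)/G^\prime$), which lets you address the word ``only'' in the statement — something the paper's proof leaves implicit as a contrast with Theorem \ref{thm:TypeAbLCAndDrvSer} — while the paper's counting argument is shorter and reuses machinery (Proposition \ref{prp:InvSetBij}) already developed for the tree category. One minor caveat: your ``only'' part is a sketch rather than a proof, since the witness group of type $(p^2,p)$ is gestured at but not named; if you want that half to be rigorous, cite a concrete example (e.g.\ a group whose abelianization has type $(p^2,p)$ while its lower exponent-$p$ parent has abelianization of type $(p,p)$), or simply note, as the paper does, that the claim of stability is the substantive content.
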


\begin{proof}
Denote by \(\varrho:=r_p(G/G^\prime)\) the \(p\)-rank of the abelianization of \(G\).
According to Theorem
\ref{thm:TargetSing},
the maximal possible kernel \(\ker(\pi)=\mathrm{P}_1(G)=\Phi(G)\)
of the parent projections \(\pi\)
is the Frattini subgroup which is contained in all maximal subgroups \(U_i\) of \(G\).
According to Proposition
\ref{prp:InvSetBij},
the map \(\pi\) induces a bijection between the sets of maximal subgroups
of the child \(G\) and the parent \(\pi(G)\),
whose cardinality is given by \(\frac{p^{\varrho}-1}{p-1}\).
Consequently, we have \(r_p(\pi(G)/\pi(G)^\prime)=\varrho=r_p(G/G^\prime)\).
\end{proof}

%\newpage

%--------------------------------------------------------------------------------

\section{Appendix: Induced homomorphism between quotient groups}
\label{s:HomSngSbg}

\noindent
Throughout this appendix, let \(\phi:\,G\to H\) be a homomorphism
from a source group (domain) \(G\) to a target group (codomain) \(H\).

%--------------------------------------------------------------------------------

\subsection{Image, pre-image and kernel}
\label{ss:ImPreKer}

First, we recall some basic facts concerning the image and pre-image of normal subgroups
and the kernel of the homomorphism \(\phi\).

%--------------------------------------------------------------------------------

\begin{lemma}
\label{lem:NrmSbgAndKer}

Suppose that \(U\le G\) and \(V\le H\) are subgroups, and \(x,y\in G\) are elements.

\begin{enumerate}

\item
If \(U\unlhd G\) is a normal subgroup of \(G\),
then its image \(\phi(U)\unlhd\phi(G)\) is a normal subgroup of the (total) image \(\mathrm{im}(\phi)=\phi(G)\).

\item
If \(V\unlhd\phi(G)\) is a normal subgroup of the image \(\mathrm{im}(\phi)=\phi(G)\),
then the pre-image \(\phi^{-1}(V)\unlhd G\) is a normal subgroup of \(G\).\\
In particular,
the kernel \(\ker(\phi)=\phi^{-1}(1)\unlhd G\) of \(\phi\) is a normal subgroup of \(G\).

\item
If \(\phi(x)=\phi(y)\),
then there exists an element \(k\in\ker(\phi)\) such that \(x=y\cdot k\).

\item
If \(\phi(x)\in\phi(U)\),
then \(x\in U\cdot\ker(\phi)\), i.e., the pre-image of the image satisfies

\begin{equation}
\label{eqn:InvAfterDir}
U\le\phi^{-1}(\phi(U))=U\cdot\ker(\phi)=\ker(\phi)\cdot U.
\end{equation}

\item
Conversely, the image of the pre-image is given by

\begin{equation}
\label{eqn:DirAfterInv}
\phi(\phi^{-1}(V))=\phi(G)\cap V\le V.
\end{equation}

\end{enumerate}

\end{lemma}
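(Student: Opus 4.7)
The plan is to establish the five claims in the stated order, using only the defining property $\phi(xy)=\phi(x)\phi(y)$ of a homomorphism together with elementary subgroup arithmetic, and to reduce each later statement to earlier ones. Parts (1) and (2) I would handle by direct conjugation arguments. For (1), every element of $\phi(U)$ has the shape $\phi(u)$ with $u\in U$, and for any $\phi(g)\in\phi(G)$ the conjugate $\phi(g)\phi(u)\phi(g)^{-1}=\phi(gug^{-1})$ lies in $\phi(U)$ by the normality of $U$ in $G$. For (2), given $x\in\phi^{-1}(V)$ and $g\in G$ the identity $\phi(gxg^{-1})=\phi(g)\phi(x)\phi(g)^{-1}$ exhibits $gxg^{-1}$ as a pre-image of an element of $V$, because $V$ is normal in $\phi(G)$; the kernel statement follows at once by specialising to $V=\lbrace 1\rbrace$, which is trivially normal in $\phi(G)$.

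Part (3) is essentially the definition of the kernel: the equation $\phi(x)=\phi(y)$ rearranges to $\phi(y^{-1}x)=1$, so that $k:=y^{-1}x\in\ker(\phi)$ and $x=y\cdot k$, as required.

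Parts (4) and (5) I would then obtain by combining (3) with set-theoretic bookkeeping. For (4), the inclusion $U\subseteq\phi^{-1}(\phi(U))$ is immediate, and conversely, if $\phi(x)\in\phi(U)$ then $\phi(x)=\phi(u)$ for some $u\in U$, and (3) yields $x=u\cdot k$ with $k\in\ker(\phi)$, proving $\phi^{-1}(\phi(U))\subseteq U\cdot\ker(\phi)$. The reverse inclusion $U\cdot\ker(\phi)\subseteq\phi^{-1}(\phi(U))$ is clear from $\phi(uk)=\phi(u)\in\phi(U)$, and the identity $U\cdot\ker(\phi)=\ker(\phi)\cdot U$ is a consequence of the normality of $\ker(\phi)$ in $G$ established in (2). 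For (5), I would give a direct double inclusion: anything in $\phi(\phi^{-1}(V))$ is the image of some $x\in G$ with $\phi(x)\in V$, hence automatically lies in $\phi(G)\cap V$; conversely, any $v\in\phi(G)\cap V$ can be written as $v=\phi(x)$ with $x\in G$, and since $v\in V$ we have $x\in\phi^{-1}(V)$, so $v\in\phi(\phi^{-1}(V))$.

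The lemma is of a purely formal nature and presents no substantial obstacle. The one point deserving care is that $\phi^{-1}(\phi(U))$ can be strictly larger than $U$ exactly when $\ker(\phi)\not\le U$, which is precisely the phenomenon encoded by Formula (\ref{eqn:InvAfterDir}); this will be the decisive ingredient when, in the main body of the paper, the induced epimorphism $\tilde{\varphi}$ and its kernel $\bigl(U^\prime\cdot\ker(\varphi)\cap U\bigr)/U^\prime$ are invoked to control stabilization and polarization of Artin patterns.
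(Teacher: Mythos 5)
Your argument is correct and follows essentially the same route as the paper: direct conjugation for (1) and (2) with the kernel obtained by specializing \(V=\lbrace 1\rbrace\), the rearrangement \(\phi(y^{-1}x)=1\) for (3), reduction of (4) to (3) plus normality of the kernel, and a double inclusion for (5) (which you in fact spell out in more detail than the paper, which merely cites the set-mapping properties of \(\phi\) and \(\phi^{-1}\)). No gaps.
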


%--------------------------------------------------------------------------------

\noindent
The situation of Lemma
\ref{lem:NrmSbgAndKer}
is visualized by Figure
\ref{fig:PreImageKernel},
where we briefly write
\(K:=\ker(\phi)=\phi^{-1}(1)\)
and
\(I:=\mathrm{im}(\phi)=\phi(G)\).

%\newpage

%--------------------------------------------------------------------------------

{\normalsize

\begin{figure}[ht]
\caption{Kernel, image and pre-image under a homomorphism \(\phi\)}
\label{fig:PreImageKernel}

% Image, pre-image, kernel

\setlength{\unitlength}{1cm}
\begin{picture}(15,7)(-8.5,-5)

% left diagram

\put(-7,-2){\line(0,-1){3}}
\multiput(-7.1,-2)(0,-3){2}{\line(1,0){0.2}}
\put(-7.1,-4){\line(1,0){0.2}}
\put(-7.2,-2){\makebox(0,0)[rc]{\(U\)}}
\put(-7.2,-4){\makebox(0,0)[rc]{\(U\cap K\)}}
\put(-7.2,-5){\makebox(0,0)[rc]{\(1\)}}

\multiput(-7,-2)(0,-2){2}{\line(2,1){2}}

\put(-5,0){\line(0,-1){3}}
\multiput(-5.1,0)(0,-3){2}{\line(1,0){0.2}}
\put(-5.1,-1){\line(1,0){0.2}}
\put(-5.2,0){\makebox(0,0)[rc]{\(G\)}}
\put(-5.2,-1){\makebox(0,0)[rc]{\(U\cdot K\)}}
\put(-5.2,-3){\makebox(0,0)[rc]{\(K\)}}

\multiput(-3.5,0.1)(0,-1){2}{\makebox(0,0)[cb]{\(\phi\)}}
\put(-3.5,-2.9){\makebox(0,0)[cb]{\(\phi\)}}
\put(-4.5,0){\vector(1,0){2}}
\multiput(-4.5,-1)(0,-2){2}{\vector(1,0){2}}

\put(-2,1){\line(0,-1){4}}
\multiput(-2.1,1)(0,-1){3}{\line(1,0){0.2}}
\put(-2.1,-3){\line(1,0){0.2}}
\put(-1.8,1){\makebox(0,0)[lc]{\(H\)}}
\put(-1.8,0){\makebox(0,0)[lc]{\(I\)}}
\put(-1.8,-1){\makebox(0,0)[lc]{\(\phi(U)\)}}
\put(-1.8,-3){\makebox(0,0)[lc]{\(1\)}}

% right diagram

\put(1,0){\line(0,-1){4}}
\put(0.9,0){\line(1,0){0.2}}
\multiput(0.9,-2)(0,-1){3}{\line(1,0){0.2}}
\put(0.8,0){\makebox(0,0)[rc]{\(G\)}}
\put(0.8,-2){\makebox(0,0)[rc]{\(\phi^{-1}(V)\)}}
\put(0.8,-3){\makebox(0,0)[rc]{\(K\)}}
\put(0.8,-4){\makebox(0,0)[rc]{\(1\)}}

\multiput(2.5,0.1)(0,-2){2}{\makebox(0,0)[cb]{\(\phi\)}}
\put(2.5,-2.9){\makebox(0,0)[cb]{\(\phi\)}}
\put(1.5,0){\vector(1,0){2}}
\multiput(1.5,-2)(0,-1){2}{\vector(1,0){2}}

\put(4,0){\line(0,-1){3}}
\multiput(3.9,0)(0,-2){2}{\line(1,0){0.2}}
\put(3.9,-3){\line(1,0){0.2}}
\put(4.2,0){\makebox(0,0)[lc]{\(I\)}}
\put(4.2,-2){\makebox(0,0)[lc]{\(I\cap V\)}}
\put(4.2,-3){\makebox(0,0)[lc]{\(1\)}}

\multiput(4,0)(0,-2){2}{\line(2,1){2}}

\put(6,2){\line(0,-1){3}}
\multiput(5.9,2)(0,-3){2}{\line(1,0){0.2}}
\put(5.9,1){\line(1,0){0.2}}
\put(6.2,2){\makebox(0,0)[lc]{\(H\)}}
\put(6.2,1){\makebox(0,0)[lc]{\(I\cdot V\)}}
\put(6.2,-1){\makebox(0,0)[lc]{\(V\)}}

\end{picture}

\end{figure}

}

%\newpage

%--------------------------------------------------------------------------------

\begin{remark}
\label{rmk:NrmSbgAndKer}

Note that, in the first statement of Lemma
\ref{lem:NrmSbgAndKer},
we cannot conclude that \(\phi(U)\unlhd H\) is a normal subgroup of the target group \(H\),
and in the second statement of Lemma
\ref{lem:NrmSbgAndKer},
we need not require that \(V\unlhd H\) is a normal subgroup of the target group \(H\).

\end{remark}

%--------------------------------------------------------------------------------

\begin{proof}
\begin{enumerate}
\item
If \(U\unlhd G\), then \(x^{-1}Ux=U\) for all \(x\in G\),\\
and thus
\(\phi(x)^{-1}\phi(U)\phi(x)=\phi(x^{-1}Ux)=\phi(U)\) for all \(\phi(x)\in\phi(G)\), i.e., \(\phi(U)\unlhd\phi(G)\).
\item
If \(V\unlhd\phi(G)\), then \((\forall x\in G)\ \phi(x)^{-1}V\phi(x)\subseteq V\),
that is, \((\forall x\in G\,\forall v\in V)\ \phi(x)^{-1}v\phi(x)\in V\).
In particular, we have
\((\forall x\in G\,\forall u\in\phi^{-1}(V))\ \phi(x^{-1}ux)=\phi(x)^{-1}\phi(u)\phi(x)\in V\),
i.e., \((\forall x\in G)\ x^{-1}\phi^{-1}(V)x\subseteq\phi^{-1}(V)\),
and consequently \(\phi^{-1}(V)\unlhd G\).\\
To prove the claim for the kernel, we put \(V:=1\unlhd G\).
\item
If \(\phi(x)=\phi(y)\), then \(\phi(y^{-1}x)=\phi(y)^{-1}\phi(x)=1\), and thus
\(y^{-1}x=k\in\ker(\phi)\).
(See also
\cite[Thm.2.2.1, p.27]{Hl}).
\item
If \(\phi(x)\in\phi(U)\),
then \((\exists u\in U)\ \phi(x)=\phi(u)\),
and thus \((\exists u\in U\,\exists k\in\ker(\phi))\ x=u\cdot k\), by (3).
This shows \(\phi^{-1}(\phi(U))\subseteq U\cdot\ker(\phi)\), and the opposite inclusion is obvious.\\
Finally, since \(\ker(\phi)\) is normal, we have \(U\cdot\ker(\phi)=\ker(\phi)\cdot U\).
\item
This is a consequence of the properties of the set mappings \(\phi^{-1}\) and \(\phi\) associated with the homomorphism \(\phi\).
\end{enumerate}
\end{proof}

%\newpage

%--------------------------------------------------------------------------------

\subsection{Criteria for the existence of the induced homomorphism}
\label{ss:CritIndHom}

\noindent
Now we state the central theorem
which provides the foundation for lots of useful applications.
It is the most general version of a series of related theorems,
which is presented in Bourbaki
\cite[Chap.1, Structures alg\'ebriques, Prop.5, p. A I.35]{Bb}.
Weaker versions will be given in the subsequent corollaries.

%--------------------------------------------------------------------------------

\begin{theorem}
\label{thm:IndHomQtn}
(Main Theorem)\\
Suppose that \(U\unlhd G\) is a normal subgroup of \(G\) and
\(V\unlhd H\) is a normal subgroup of \(H\).
Let \(\omega_U:\,G\to G/U\) and \(\omega_V:\,H\to H/V\)
denote the canonical projections onto the quotients.

\begin{itemize}
\item
The following three conditions for the homomorphism \(\phi:\,G\to H\) are equivalent.
\begin{enumerate}
\item
There exists an induced homomorphism \(\tilde{\phi}:\,G/U\to H/V\)
such that \(\tilde{\phi}\circ\omega_U=\omega_V\circ\phi\),
that is,

\begin{equation}
\label{eqn:IndHomQtn}
\tilde{\phi}(xU)=\phi(x)V \quad\text{ for all } x\in G.
\end{equation}

\item
\(\phi(U)\subseteq V\).
\item
\(U\subseteq\phi^{-1}(V)\).
\end{enumerate}
\item
If the induced homomorphism \(\tilde{\phi}\) of the quotients exists,
then it is determined uniquely by \(\phi\),
and its kernel, image and cokernel are given by

\begin{equation}
\label{eqn:KerImCoKer}
\begin{aligned}
\ker(\tilde{\phi}) &=& \phi^{-1}(V)/U &\supseteq& (U\cdot\ker(\phi))/U,\\
\mathrm{im}(\tilde{\phi}) &=& (V\cdot\phi(G))/V, && \\
\mathrm{coker}(\tilde{\phi}) &\simeq& H/(V\cdot\phi(G)), && \text{ if } V\cdot\phi(G)\unlhd H.
\end{aligned}
\end{equation}

\noindent
In particular, \(\tilde{\phi}\) is a monomorphism
if and only if \(U=\phi^{-1}(V)\).\\
Moreover, \(\tilde{\phi}\) is an epimorphism if and only if \(H=V\cdot\phi(G)\).\\
In particular, \(\tilde{\phi}\) is certainly an epimorphism if \(\phi\) is onto.
\end{itemize}

\end{theorem}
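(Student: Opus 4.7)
The plan is as follows. First I settle the triangle of equivalences \((1)\Leftrightarrow(2)\Leftrightarrow(3)\). The equivalence \((2)\Leftrightarrow(3)\) is immediate from the pair (\ref{eqn:InvAfterDir})--(\ref{eqn:DirAfterInv}) in Lemma~\ref{lem:NrmSbgAndKer}: applying \(\phi^{-1}\) to \(\phi(U) \subseteq V\) gives \(U \subseteq U \cdot \ker(\phi) = \phi^{-1}(\phi(U)) \subseteq \phi^{-1}(V)\), while applying \(\phi\) to \(U \subseteq \phi^{-1}(V)\) gives \(\phi(U) \subseteq \phi(G)\cap V \subseteq V\). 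For \((1)\Rightarrow(2)\), evaluating \(\tilde{\phi}\circ\omega_U = \omega_V\circ\phi\) on any \(u \in U\) forces \(\phi(u)V = \tilde{\phi}(U) = V\), hence \(\phi(u) \in V\). The crux is the converse \((2)\Rightarrow(1)\): I define \(\tilde{\phi}(xU) := \phi(x)V\) and verify independence of the coset representative via \(xU = yU \Rightarrow y^{-1}x \in U \Rightarrow \phi(y^{-1}x) \in \phi(U) \subseteq V \Rightarrow \phi(x)V = \phi(y)V\). The homomorphism property is then inherited from \(\phi\), the commutation \(\tilde{\phi}\circ\omega_U = \omega_V\circ\phi\) holds by construction, and uniqueness of \(\tilde{\phi}\) follows from the surjectivity of \(\omega_U\), since any candidate must satisfy \(\tilde{\phi}(xU) = \omega_V(\phi(x)) = \phi(x)V\).

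Next I compute the three quantities in (\ref{eqn:KerImCoKer}). Directly from the defining formula, \(xU \in \ker(\tilde{\phi}) \Longleftrightarrow \phi(x) \in V \Longleftrightarrow x \in \phi^{-1}(V)\), whence \(\ker(\tilde{\phi}) = \phi^{-1}(V)/U\); the stated containment \(\supseteq (U\cdot\ker(\phi))/U\) simply records that \(\phi(U\cdot\ker(\phi)) = \phi(U) \subseteq V\), which places \(U\cdot\ker(\phi)\) inside \(\phi^{-1}(V)\). The image \(\mathrm{im}(\tilde{\phi}) = \{\phi(x)V : x \in G\}\) is visibly \(V\cdot\phi(G)/V\), where the normality \(V \unlhd H\) guarantees that the set product \(V\cdot\phi(G)\) is a subgroup of \(H\). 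The cokernel formula then follows from the third isomorphism theorem applied to the chain \(V \le V\cdot\phi(G) \le H\), provided the middle group is normal in \(H\), which is the explicit hypothesis.

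The monomorphism and epimorphism criteria are read off at once from these formulas: \(\tilde{\phi}\) is injective iff \(\ker(\tilde{\phi}) = \phi^{-1}(V)/U\) is trivial iff \(\phi^{-1}(V) = U\); and surjective iff \(V\cdot\phi(G)/V = H/V\) iff \(V\cdot\phi(G) = H\), which is certainly the case when \(\phi(G) = H\). The only genuinely delicate point in the plan is the well-definedness check underlying \((2)\Rightarrow(1)\), where the hypothesis \(\phi(U) \subseteq V\) is exactly what is needed; everything else is a transparent translation through the canonical projections \(\omega_U\) and \(\omega_V\) using the image/pre-image bookkeeping supplied by Lemma~\ref{lem:NrmSbgAndKer}.
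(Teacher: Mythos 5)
Your proposal is correct and follows essentially the same route as the paper's own proof: well-definedness of \(\tilde{\phi}(xU):=\phi(x)V\) under the hypothesis \(\phi(U)\subseteq V\), the pre-image/image bookkeeping of Lemma \ref{lem:NrmSbgAndKer} for \((2)\Leftrightarrow(3)\), the direct kernel computation \(\ker(\tilde{\phi})=\phi^{-1}(V)/U\), and the quotient identification \(\mathrm{coker}(\tilde{\phi})\simeq H/(V\cdot\phi(G))\). The only cosmetic difference is that you read the epimorphism criterion off the image formula, whereas the paper argues via surjectivity of \(\omega_V\circ\phi\); both are immediate.
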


%--------------------------------------------------------------------------------

\noindent
We summarize the criteria for the existence of the induced homomorphism in a formula:

\begin{equation}
\label{eqn:CritIndHom}
(\exists \tilde{\phi}:\,G/U\to H/V)\ \tilde{\phi}\circ\omega_U=\omega_V\circ\phi
\quad\Longleftrightarrow\quad
\phi(U)\subseteq V
\quad\Longleftrightarrow\quad
U\subseteq\phi^{-1}(V).
\end{equation}

\noindent
The situation of Theorem
\ref{thm:IndHomQtn}
is shown in the commutative diagram of Figure
\ref{fig:IndHomQtn}.

%\newpage

%--------------------------------------------------------------------------------

{\normalsize

\begin{figure}[ht]
\caption{Induced homomorphism \(\tilde{\phi}\) of quotients}
\label{fig:IndHomQtn}

% Induced homomorphism of quotients

\setlength{\unitlength}{1cm}
\begin{picture}(6,5)(-3,-6.5)

\put(-2,-2){\makebox(0,0)[cc]{\(G\)}}
%\put(-2,-4){\makebox(0,0)[cc]{\(\downarrow\)}}
\put(-2,-2.5){\vector(0,-1){3}}
\put(-2.1,-4){\makebox(0,0)[rc]{\(\omega_U\)}}
\put(-2,-6){\makebox(0,0)[cc]{\(G/U\)}}

\put(0,-1.9){\makebox(0,0)[cb]{\(\phi\)}}
%\put(0,-2){\makebox(0,0)[cc]{\(\longrightarrow\)}}
\put(-1.5,-2){\vector(1,0){3}}
\put(-1.5,-6){\vector(1,0){3}}
%\put(0,-6){\makebox(0,0)[cc]{\(\longrightarrow\)}}
\put(0,-6.1){\makebox(0,0)[ct]{\(\tilde{\phi}\)}}

\put(2,-2){\makebox(0,0)[cc]{\(H\)}}
%\put(2,-4){\makebox(0,0)[cc]{\(\downarrow\)}}
\put(2,-2.5){\vector(0,-1){3}}
\put(2.1,-4){\makebox(0,0)[lc]{\(\omega_V\)}}
\put(2,-6){\makebox(0,0)[cc]{\(H/V\)}}

\end{picture}

\end{figure}

}

%\newpage

%--------------------------------------------------------------------------------

\begin{remark}
\label{rmk:IndHomQtn}

If the normal subgroup \(U\unlhd G\) in the assumptions of Theorem
\ref{thm:IndHomQtn}
is taken as \(U:=\phi^{-1}(V)\),
then the induced homomorphism \(\tilde{\phi}\) exists automatically
and is a monomorphism.

Note that \(U=\phi^{-1}(V)\) does not imply \(\phi(U)=V\)
but only \(\phi(U)\subseteq V\),
if \(\phi\) is not an epimorphism.
Similarly, \(\phi(U)=V\) does not imply \(U=\phi^{-1}(V)\)
but only \(U\subseteq\phi^{-1}(V)\),
if \(\phi\) is not a monomorphism.

\end{remark}

%--------------------------------------------------------------------------------

\begin{proof}
\begin{itemize}
\item
(1) \(\Longrightarrow\) (2):
If there exists a homomorphism \(\tilde{\phi}:\,G/U\to H/V\)
such that \(\tilde{\phi}(xU)=\phi(x)V\) for all \(x\in G\),
then, for any \(x\in U\), we have \(xU=U=1_{G/U}\),
and thus \(\phi(x)V=\tilde{\phi}(xU)=\tilde{\phi}(1_{G/U})=1_{H/V}=V\),
which means \(\phi(x)\in V\).
It follows that \(\phi(U)\subseteq V\).\\
(2) \(\Longrightarrow\) (1):
If \(\phi(U)\subseteq V\),
then the image \(\tilde{\phi}(xU)\) of the coset \(xU\in G/U\) under \(\tilde{\phi}\)
is independent of the representative \(x\in G\):
If \(xU=yU\) for \(x,y\in G\), then \(y^{-1}x\in U\) and thus
\(\phi(y)^{-1}\phi(x)=\phi(y^{-1}x)\in\phi(U)\subseteq V\).
Consequently, we have \(\tilde{\phi}(xU)=\phi(x)V=\phi(y)V=\tilde{\phi}(yU)\).
Furthermore, \(\tilde{\phi}\) is a homomorphism, since
\[\tilde{\phi}(xU\cdot yU)=\tilde{\phi}(xyU)=\phi(xy)V=
\phi(x)\phi(y)V=\phi(x)V\cdot\phi(y)V=\tilde{\phi}(xU)\cdot\tilde{\phi}(yU).\]
(2) \(\Longrightarrow\) (3):
If \(\phi(U)\subseteq V\), then
\(U\subseteq\phi^{-1}(\phi(U))\subseteq\phi^{-1}(V)\).\\
(3) \(\Longrightarrow\) (2):
If \(U\subseteq\phi^{-1}(V)\), then
\(\phi(U)\subseteq\phi(\phi^{-1}(V))\subseteq V\).
\item
The image of any \(xU\in G/U\) under \(\tilde{\phi}\)
is determined uniquely by \(\phi\), since
\(\tilde{\phi}(xU)=\phi(x)V\).\\
The kernel of \(\tilde{\phi}\) is given by
\(\ker(\tilde{\phi})=\tilde{\phi}^{-1}(1_{H/V})=\tilde{\phi}^{-1}(V)\),
and for \(x\in G\) we have
\[xU\in\tilde{\phi}^{-1}(V)\Longleftrightarrow\phi(x)V=\tilde{\phi}(xU)=V
\Longleftrightarrow\phi(x)\in V\Longleftrightarrow x\in\phi^{-1}(V),\]
that is \(\ker(\tilde{\phi})=\phi^{-1}(V)/U\),
which clearly contains \(\phi^{-1}(\phi(U))/U=(U\cdot\ker(\phi))/U\),
since \(\phi(U)\subseteq V\).\\
The cokernel of \(\tilde{\phi}\) is given by
\(\mathrm{coker}(\tilde{\phi})=(H/V)/\tilde{\phi}(G/U)=(H/V)/(V\cdot\phi(G)/V)\simeq H/(V\cdot\phi(G))\), if \(V\cdot\phi(G)\unlhd H\).\\
Finally, if \(\phi\) is an epimorphism, then
\(\tilde{\phi}\circ\omega_G=\omega_H\circ\phi\) is also an epimorphism,
which forces the terminal map \(\tilde{\phi}\) to be an epimorphism.
\end{itemize}
\end{proof}

%\newpage

%--------------------------------------------------------------------------------

\subsection{Factorization through a quotient}
\label{ss:FactThruQtn}

\noindent
Theorem
\ref{thm:IndHomQtn}
can be used to derive numerous special cases.
Usually it suffices to consider the quotient group \(G/U\) corresponding to a normal subgroup \(U\) of the source group \(G\)
of the homomorphism \(\phi:\,G\to H\) and to view the target group \(H\) as the trivial quotient \(H/1\).
In this weaker form, the existence criterion for the induced homomorphism occurs in Lang's book
\cite[p.17]{La}.

%--------------------------------------------------------------------------------

\begin{corollary}
\label{cor:FctThrQtn}
(Factorization through a quotient)\\
Suppose \(U\unlhd G\) is a normal subgroup of \(G\) and
\(\omega:\,G\to G/U\) denotes the natural epimorphism onto the quotient.

If \(U\le\ker(\phi)\),
then there exists a unique homomorphism \(\tilde{\phi}:\,G/U\to H\) such that \(\tilde{\phi}\circ\omega=\phi\), that is,
\(\tilde{\phi}(xU)=\phi(x)\) for all \(x\in G\).

Moreover, the kernel of \(\tilde{\phi}\) is given by \(\ker(\tilde{\phi})=\ker(\phi)/U\).

\end{corollary}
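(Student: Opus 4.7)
The plan is to derive this corollary as a direct specialization of Theorem \ref{thm:IndHomQtn}, by taking the normal subgroup of the codomain to be trivial. First I would set $V := 1 \unlhd H$ and identify $H/V = H/1$ with $H$ in the canonical way, under which the projection $\omega_V$ becomes the identity map on $H$. Under this identification, the equivalent conditions $\phi(U) \subseteq V$ and $U \subseteq \phi^{-1}(V)$ of Theorem \ref{thm:IndHomQtn} both collapse to $\phi(U) = 1$, which is precisely the hypothesis $U \le \ker(\phi)$.

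Next, I would invoke Theorem \ref{thm:IndHomQtn} to obtain the unique induced homomorphism $\tilde{\phi}: G/U \to H/1 \simeq H$ characterized by $\tilde{\phi} \circ \omega_U = \omega_V \circ \phi = \phi$, i.e. $\tilde{\phi}(xU) = \phi(x)$ for all $x \in G$. Uniqueness is inherited from the main theorem, and can also be read off directly from the surjectivity of $\omega = \omega_U$: any homomorphism $\tilde{\phi}$ satisfying $\tilde{\phi} \circ \omega = \phi$ is forced to send the coset $xU$ to $\phi(x)$, so no other extension is possible.

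Finally, for the kernel I would specialize the formula $\ker(\tilde{\phi}) = \phi^{-1}(V)/U$ from \eqref{eqn:KerImCoKer}, which with $V = 1$ becomes $\ker(\tilde{\phi}) = \phi^{-1}(1)/U = \ker(\phi)/U$, as claimed.

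There is essentially no obstacle here; the corollary is a one-step deduction from the main theorem. The only minor subtlety is the implicit canonical identification of $H/1$ with $H$, which must be acknowledged so that the asserted equation $\tilde{\phi} \circ \omega = \phi$ (rather than $\omega_V \circ \phi$) makes sense as written.
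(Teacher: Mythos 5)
Your proposal is correct and follows exactly the paper's own route: specializing Theorem \ref{thm:IndHomQtn} to the trivial normal subgroup \(V=1\) of \(H\), so that the existence criteria collapse to \(U\le\ker(\phi)\), with the kernel formula \(\ker(\tilde{\phi})=\phi^{-1}(1)/U=\ker(\phi)/U\) read off from \eqref{eqn:KerImCoKer}. Your explicit remarks on the identification \(H/1\simeq H\) and on uniqueness via surjectivity of \(\omega\) only make the specialization slightly more detailed than the paper's version.
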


%--------------------------------------------------------------------------------

\noindent
Again we summarize the criterion in a formula:

\begin{equation}
\label{eqn:FctThrQtn}
(\exists\tilde{\phi}:\,G/U\to H)\ \tilde{\phi}\circ\omega=\phi 
\quad\Longleftrightarrow\quad
\phi(U)=1
\quad\Longleftrightarrow\quad
U\le\ker(\phi).
\end{equation}

\noindent
In this situation the homomorphism \(\phi\) is said to \textit{factor} or \textit{factorize} through the quotient \(G/U\)
via the canonical projection \(\omega\) and the \textit{induced} homomorphism \(\tilde{\phi}\).

%\noindent
The scenario of Corollary
\ref{cor:FctThrQtn}
is visualized by Figure
\ref{fig:FctThrQtn}.

%\newpage

%--------------------------------------------------------------------------------

{\normalsize

\begin{figure}[ht]
\caption{Homomorphism \(\phi\) factorized through a quotient}
\label{fig:FctThrQtn}

% Homomorphism passing through quotient

\setlength{\unitlength}{1cm}
\begin{picture}(6,4.5)(-3,-6)

\put(-2,-2){\makebox(0,0)[cc]{\(G\)}}
\put(-2,-2.5){\vector(1,-2){1.5}}
\put(-1.5,-4){\makebox(0,0)[rc]{\(\omega\)}}

\put(0,-1.9){\makebox(0,0)[cb]{\(\phi\)}}
\put(-1.5,-2){\vector(1,0){3}}

\put(2,-2){\makebox(0,0)[cc]{\(H\)}}
\put(0.5,-5.5){\vector(1,2){1.5}}
\put(1.5,-4){\makebox(0,0)[lc]{\(\tilde{\phi}\)}}

\put(0,-6){\makebox(0,0)[cc]{\(G/U\)}}

\end{picture}

\end{figure}
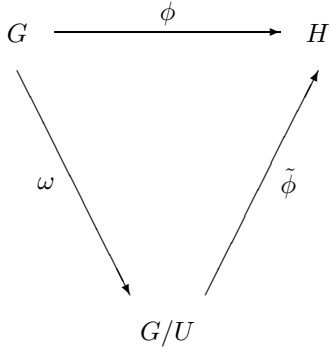

}

%\newpage

%--------------------------------------------------------------------------------

\begin{proof}
The claim is a consequence of Theorem
\ref{thm:IndHomQtn}
in the special case that \(V:=1\) is the trivial group.
The equivalent conditions for the existence of the induced homomorphism \(\tilde{\phi}\)
are \(\phi(U)=1\) resp. \(U\subseteq\phi^{-1}(1)=\ker(\phi)\).
\end{proof}

%--------------------------------------------------------------------------------

\begin{remark}
\label{rmk:FctThrQtn}
Note that the well-known \textit{isomorphism theorem}
(sometimes also called \textit{homomorphism theorem})
is a special case of Corollary
\ref{cor:FctThrQtn}.
If we put \(U:=\ker(\phi)\) and
if we assume that \(\phi\) is an epimorphism with \(\phi(G)=H\),
then the induced homomorphism \(\tilde{\phi}:\,G/\ker(\phi)\to\phi(G)\)
is an isomorphism, since \(\ker(\tilde{\phi})=\ker(\phi)/U\simeq 1\).
\end{remark}

%--------------------------------------------------------------------------------

\noindent
In this weakest form,

\begin{equation}
\label{eqn:IsomThm}
G/\ker(\phi)\simeq\mathrm{im}(\phi),
\end{equation}

\noindent
actually without any additional assumptions being required,
the existence theorem for the induced homomorphism appears
in almost every standard text book on group theory or algebra,
e.g.,
\cite[Thm.2.3.2, p.28]{Hl}
and
\cite[Thm.X.18, p.339]{Is}.

%\newpage

%--------------------------------------------------------------------------------

\subsection{Application to series of characteristic subgroups}
\label{ss:SerCharSbg}

\noindent
The normal subgroup \(U\unlhd G\) in the assumptions of Corollary
\ref{cor:FctThrQtn}
can be specialized to various characteristic subgroups of \(G\)
for which the condition \(U\subseteq\ker(\phi)\) can be expressed differently,
namely by \textit{invariants of series of characteristic subgroups}.

%--------------------------------------------------------------------------------

\begin{corollary}
\label{cor:CharSbg}
The homomorphism \(\phi:\,G\to H\) can be factorized
through various quotients of \(G\) in the following way.
Let \(n\) be a positive integer and \(p\) be a prime number.

\begin{enumerate}
\item
\(\phi\) factors through the \(n\)th derived quotient \(G/G^{(n)}\)
if and only if the derived length of \(\phi(G)\) is bounded by \(\mathrm{dl}(\phi(G))\le n\).
\item
\(\phi\) factors through the \(n\)th lower central quotient \(G/\gamma_n(G)\)
if and only if the nilpotency class of \(\phi(G)\) is bounded by \(\mathrm{cl}(\phi(G))\le n-1\).
\item
\(\phi\) factors through the \(n\)th lower exponent-\(p\) central quotient \(G/P_n(G)\)
if and only if  the \(p\)-class of \(\phi(G)\) is bounded by \(\mathrm{cl}_p(\phi(G))\le n\).
\end{enumerate}
\end{corollary}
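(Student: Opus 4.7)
The plan is to apply Corollary \ref{cor:FctThrQtn} three times, once for each of the characteristic subgroups $U \in \lbrace G^{(n)},\gamma_n(G),P_n(G)\rbrace$. That corollary reduces the factorization problem to the single condition $U \le \ker(\phi)$, which in turn is equivalent to $\phi(U) = 1$. So the real content of the corollary is the translation of $\phi(U) = 1$ into a structural property of the image $\phi(G)$.

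The key observation I would establish first is that each of these three series is \emph{preserved} by any homomorphism $\phi$, in the sense that
\begin{equation*}
\phi(G^{(n)}) = \phi(G)^{(n)}, \qquad \phi(\gamma_n(G)) = \gamma_n(\phi(G)), \qquad \phi(P_n(G)) = P_n(\phi(G)).
\end{equation*}
All three identities follow by induction on $n$ from the fact that homomorphisms commute with the formation of commutators, $\phi(\lbrack a,b\rbrack) = \lbrack\phi(a),\phi(b)\rbrack$, with the formation of (normal closures of) $p$-th powers, and with generation of subgroups $\phi(\langle X\rangle) = \langle\phi(X)\rangle$. For the derived series the inductive step is $\phi(G^{(n+1)}) = \phi(\lbrack G^{(n)},G^{(n)}\rbrack) = \lbrack\phi(G^{(n)}),\phi(G^{(n)})\rbrack = \lbrack\phi(G)^{(n)},\phi(G)^{(n)}\rbrack = \phi(G)^{(n+1)}$. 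The lower central series argument is analogous, using $\gamma_{n+1}(G) = \lbrack\gamma_n(G),G\rbrack$. For the lower exponent-$p$ central series, I would use the standard recursion $P_n(G) = \lbrack P_{n-1}(G),G\rbrack\cdot P_{n-1}(G)^p$ (with $P_0(G) = G$) and note that $\phi$ commutes both with commutator formation and with $p$-th powers, hence with this construction.

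Once these three image-preservation identities are in hand, the three equivalences become immediate: for instance, $G^{(n)} \le \ker(\phi)$ iff $\phi(G^{(n)}) = 1$ iff $\phi(G)^{(n)} = 1$ iff $\mathrm{dl}(\phi(G)) \le n$, and the remaining two cases are verbatim substitutions. Each time, Corollary \ref{cor:FctThrQtn} then delivers the induced factorization $\tilde{\phi}$ through the corresponding quotient. I would be a little careful with the off-by-one conventions: $\mathrm{cl}(K) \le n-1$ corresponds to $\gamma_n(K) = 1$ under the standard indexing $\gamma_1(K) = K$, and $\mathrm{cl}_p(K) \le n$ corresponds to $P_n(K) = 1$ under the indexing $P_0(K) = K$, which matches the statement.

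The main obstacle is neither the application of Corollary \ref{cor:FctThrQtn} (which is automatic) nor the reformulation of the kernel condition (which is a tautology), but the image-preservation lemma for the lower exponent-$p$ central series; the derived and lower central series versions are entirely standard, but for $P_n$ one must verify that the definition really is stable under homomorphisms given the mixed commutator-plus-$p$-th-power construction. Provided one adopts the recursive definition above, however, the induction goes through without difficulty, and the proof reduces to three parallel one-line equivalences.
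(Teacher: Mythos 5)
Your proposal is correct and follows essentially the same route as the paper: the identical induction showing $\phi(G^{(n)})=\phi(G)^{(n)}$, $\phi(\gamma_n(G))=\gamma_n(\phi(G))$, and $\phi(P_n(G))=P_n(\phi(G))$ (using the same recursion for $P_n$, up to harmless reordering of the factors), followed by the application of Corollary \ref{cor:FctThrQtn} and the translation of triviality of the $n$th term into the stated bounds on $\mathrm{dl}$, $\mathrm{cl}$, and $\mathrm{cl}_p$. No gaps; your attention to the indexing conventions matches the paper's.
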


%--------------------------------------------------------------------------------

\noindent
We summarize these criteria in terms of the length of series in a formula:

\begin{equation}
\label{eqn:CharSbg}
\begin{aligned}
(\exists\tilde{\phi}:\,G/G^{(n)}\to H)\ \tilde{\phi}\circ\omega=\phi
&\quad\Longleftrightarrow\quad& \mathrm{dl}(\phi(G))\le n,\\
(\exists\tilde{\phi}:\,G/\gamma_n(G)\to H)\ \tilde{\phi}\circ\omega=\phi
&\quad\Longleftrightarrow\quad& \mathrm{cl}(\phi(G))\le n-1, \\
(\exists\tilde{\phi}:\,G/P_n(G)\to H)\ \tilde{\phi}\circ\omega=\phi
&\quad\Longleftrightarrow\quad& \mathrm{cl}_p(\phi(G))\le n.
\end{aligned}
\end{equation}

%--------------------------------------------------------------------------------

\begin{proof}
By induction, we show that, firstly,\\
\(\phi(G^{(n)})=\phi(\lbrack G^{(n-1)},G^{(n-1)}\rbrack)=\lbrack \phi(G^{(n-1)}),\phi(G^{(n-1)})\rbrack
=\lbrack (\phi(G))^{(n-1)},(\phi(G))^{(n-1)}\rbrack=(\phi(G))^{(n)}\),\\
secondly,
\(\phi(\gamma_n(G))=\phi(\lbrack G,\gamma_{n-1}(G)\rbrack)=\lbrack \phi(G),\phi(\gamma_{n-1}(G))\rbrack
=\lbrack \phi(G),\gamma_{n-1}(\phi(G))\rbrack=\gamma_n(\phi(G))\),\\
and finally,
\(\phi(P_n(G))=\phi(P_{n-1}(G)^p\cdot\lbrack G,P_{n-1}(G)\rbrack)\)\\
\(=\phi(P_{n-1}(G))^p\cdot\lbrack \phi(G),\phi(P_{n-1}(G))\rbrack
=P_{n-1}(\phi(G))^p\cdot\lbrack \phi(G),P_{n-1}(\phi(G))\rbrack=P_n(\phi(G))\).\\
Now, the claims follow from Corollary
\ref{cor:FctThrQtn}
by observing that
\((\phi(G))^{(n)}=1\) iff \(\mathrm{dl}(\phi(G))\le n\),
\(\gamma_n(\phi(G))=1\) iff \(\mathrm{cl}(\phi(G))\le n-1\), and
\(P_n(\phi(G))=1\) iff \(\mathrm{cl}_p(\phi(G))\le n\)
\end{proof}

%--------------------------------------------------------------------------------

\noindent
The following special case is particularly well known.
Here we take the commutator subgroup \(G^\prime\) of \(G\) as our charecteristic subgroup,
which can either be viewed as the term \(\gamma_2(G)\) of the lower central series of \(G\)
or as the term \(G^{(1)}\) of the derived series of \(G\).

%--------------------------------------------------------------------------------

\begin{corollary}
\label{cor:Abelianization}
A homomorphism \(\phi:\,G\to H\) passes through the derived quotient \(G/G^\prime\) of its source group \(G\)
if and only if its image \(\mathrm{im}(\phi)=\phi(G)\) is abelian.
\end{corollary}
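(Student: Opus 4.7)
The plan is to derive this as an immediate special case of Corollary \ref{cor:FctThrQtn} applied to the normal subgroup $U := G^\prime = [G,G]$, which is characteristic (and so certainly normal) in $G$. Taking $\omega : G \to G/G^\prime$ to be the canonical projection, the existence criterion (\ref{eqn:FctThrQtn}) specializes to the equivalence
\[
(\exists \tilde{\phi} : G/G^\prime \to H)\ \tilde{\phi}\circ\omega = \phi
\quad\Longleftrightarrow\quad
G^\prime \le \ker(\phi).
\]
The task is therefore reduced to proving the equivalence $G^\prime \le \ker(\phi) \Longleftrightarrow \phi(G)$ is abelian.

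For the implication ($\Rightarrow$), assume $G^\prime \le \ker(\phi)$. For arbitrary $x,y \in G$, the commutator $[x,y]$ lies in $G^\prime$ and hence in $\ker(\phi)$, so that $1 = \phi([x,y]) = [\phi(x),\phi(y)]$. Since every pair of elements of $\phi(G)$ commutes, $\phi(G)$ is abelian. For the converse ($\Leftarrow$), I would invoke the identity $\phi(G^\prime) = \phi(G)^\prime$, which has already been established in the proof of Theorem \ref{thm:TargetSing} (see Equation (\ref{eqn:ImgDrvSbg})). If $\phi(G)$ is abelian, then $\phi(G)^\prime = 1$, and therefore $\phi(G^\prime) = 1$, i.e., $G^\prime \subseteq \phi^{-1}(1) = \ker(\phi)$.

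Alternatively, one may view the statement as the case $n=1$ of Corollary \ref{cor:CharSbg}(1), since $G^{(1)} = G^\prime$ and $\mathrm{dl}(\phi(G)) \le 1$ is by definition the condition that $\phi(G)$ be abelian; this route bypasses the direct commutator manipulation by appealing to the functoriality $\phi(G^{(n)}) = (\phi(G))^{(n)}$ already proved there. There is no genuine obstacle in this argument: the only non-formal ingredient is the compatibility of the derived subgroup with images under a homomorphism, which has been provided upstream. The proof is essentially a bookkeeping exercise that specializes the main factorization theorem to the abelianization functor.
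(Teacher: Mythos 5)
Your proposal is correct. Your primary route differs slightly from the paper's: you specialize Corollary \ref{cor:FctThrQtn} to \(U=G^\prime\) and then verify the equivalence \(G^\prime\le\ker(\phi)\Longleftrightarrow\phi(G)\) abelian by hand, proving the forward direction with a direct commutator computation \(\phi(\lbrack x,y\rbrack)=\lbrack\phi(x),\phi(y)\rbrack=1\) and the converse via the identity \(\phi(G^\prime)=\phi(G)^\prime\) already available from the proof of Theorem \ref{thm:TargetSing} (Equation (\ref{eqn:ImgDrvSbg}) with \(U=G\)). The paper instead obtains the corollary in one line by putting \(n=1\) in the first statement (or \(n=2\) in the second) of Corollary \ref{cor:CharSbg}, whose proof contains exactly the functoriality \(\phi(G^{(n)})=(\phi(G))^{(n)}\) that you invoke; so the mathematical content is the same, but the paper routes it through the series-of-characteristic-subgroups machinery, while your main argument is more elementary and self-contained, at the cost of redoing a small piece of what Corollary \ref{cor:CharSbg} already packages. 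Your stated alternative (the case \(n=1\) of Corollary \ref{cor:CharSbg}(1), using \(G^{(1)}=G^\prime\) and \(\mathrm{dl}(\phi(G))\le 1\)) coincides with the paper's proof verbatim, so both of your routes are sound and the choice between them is purely one of economy of citation versus transparency of the commutator argument.
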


%--------------------------------------------------------------------------------

\begin{proof}
Putting \(n=1\) in the first statement
or \(n=2\) in the second statement
of Corollary
\ref{cor:CharSbg}
we obtain the well-known special case that
\(\phi\) passes through the abelianization \(G/G^\prime\) if and only if \(\phi(G)\) is abelian,
which is equivalent to \(\mathrm{dl}(\phi(G))\le 1\), and also to \(\mathrm{cl}(\phi(G))\le 1\).
\end{proof}

%--------------------------------------------------------------------------------

\noindent
The situation of Corollary
\ref{cor:Abelianization}
is visualized in Figure
\ref{fig:PassThruDerivedQtn}.

%\newpage

%--------------------------------------------------------------------------------

{\normalsize

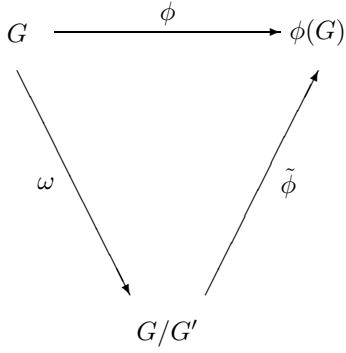
\begin{figure}[ht]
\caption{Homomorphism \(\phi\) passing through the derived quotient}
\label{fig:PassThruDerivedQtn}

% Homomorphism passing through derived quotient

\setlength{\unitlength}{1cm}
\begin{picture}(6,4.5)(-3,-6)

\put(-2,-2){\makebox(0,0)[cc]{\(G\)}}
\put(-2,-2.5){\vector(1,-2){1.5}}
\put(-1.5,-4){\makebox(0,0)[rc]{\(\omega\)}}

\put(0,-1.9){\makebox(0,0)[cb]{\(\phi\)}}
\put(-1.5,-2){\vector(1,0){3}}

\put(2,-2){\makebox(0,0)[cc]{\(\phi(G)\)}}
\put(0.5,-5.5){\vector(1,2){1.5}}
\put(1.5,-4){\makebox(0,0)[lc]{\(\tilde{\phi}\)}}

\put(0,-6){\makebox(0,0)[cc]{\(G/G^\prime\)}}

\end{picture}

\end{figure}

}

%\newpage

%--------------------------------------------------------------------------------

Using the first part of the proof of Corollary
\ref{cor:CharSbg}
we can recognize the behavior of several central series
under homomorphisms.

%--------------------------------------------------------------------------------

\begin{lemma}
\label{lem:CtrSerHom}
Let \(\phi:\,G\to H\) be a homomorphism of groups
and suppose that \(n\ge 1\) is an integer and \(p\ge 2\) a prime number.
Let \(U\le G\) be a subgroup with image \(V=\phi(U)\le H\).
\begin{enumerate}
\item
If \(n:=\mathrm{dl}(U)\), then
\[\mathrm{dl}(V)
\begin{cases}
=n & \Longleftrightarrow U^{(n-1)}\not\subseteq\ker(\phi), \\
<n & \Longleftrightarrow U^{(n-1)}\le\ker(\phi).
\end{cases}
\]
\item
If \(n:=\mathrm{cl}(U)\), then
\[\mathrm{cl}(V)
\begin{cases}
=n & \Longleftrightarrow \gamma_n(U)\not\subseteq\ker(\phi), \\
<n & \Longleftrightarrow \gamma_n(U)\le\ker(\phi).
\end{cases}
\]
\item
If \(n:=\mathrm{cl}_p(U)\), then
\[\mathrm{cl}_p(V)
\begin{cases}
=n & \Longleftrightarrow P_{n-1}(U)\not\subseteq\ker(\phi), \\
<n & \Longleftrightarrow P_{n-1}(U)\le\ker(\phi).
\end{cases}
\]
\end{enumerate}
\end{lemma}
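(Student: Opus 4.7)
The plan is to reduce all three parts of the lemma to a single observation by invoking the image identities
$\phi(U^{(k)}) = V^{(k)}$, $\phi(\gamma_k(U)) = \gamma_k(V)$, and $\phi(P_k(U)) = P_k(V)$,
which were established by induction in the proof of Corollary \ref{cor:CharSbg} for $\phi$ applied to $G$. I would first remark that these identities hold verbatim with $U$ in place of $G$ and $V = \phi(U)$ in place of $\phi(G)$, since they depend only on the fact that $\phi$ (restricted to $U$) preserves products, commutators, and $p$-th powers; the restriction $\phi\vert_U \colon U \to V$ is itself a homomorphism, to which the inductive argument of that corollary applies without change.

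Next, I would translate the definitions uniformly: $n = \mathrm{dl}(U)$ means $U^{(n)} = 1$ and $U^{(n-1)} \ne 1$; $n = \mathrm{cl}(U)$ means $\gamma_{n+1}(U) = 1$ and $\gamma_n(U) \ne 1$; and $n = \mathrm{cl}_p(U)$ means $P_n(U) = 1$ and $P_{n-1}(U) \ne 1$. Applying $\phi$ to the last trivial term yields in each case $V^{(n)} = 1$, $\gamma_{n+1}(V) = 1$, or $P_n(V) = 1$ respectively, so the corresponding invariant of $V$ is always at most $n$. The announced dichotomy then concerns only whether the last \emph{non-trivial} term of the series for $U$ survives in the image.

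The final step is the elementary equivalence $\phi(W) = 1 \Longleftrightarrow W \le \ker(\phi)$, applied to $W = U^{(n-1)}$, $W = \gamma_n(U)$, and $W = P_{n-1}(U)$. For instance, $\mathrm{dl}(V) = n$ holds exactly when $V^{(n-1)} \ne 1$, which by the image identity is $\phi(U^{(n-1)}) \ne 1$, i.e., $U^{(n-1)} \not\subseteq \ker(\phi)$; the complementary case $\mathrm{dl}(V) < n$ corresponds to $V^{(n-1)} = 1$, equivalently $U^{(n-1)} \le \ker(\phi)$. The two remaining parts are formally identical, differing only in which characteristic series and which last non-trivial term is invoked. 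There is no real obstacle here; the only point requiring explicit mention is the transfer of the image formulas from $(G, \phi(G))$ to $(U, V)$, which is immediate since the three series are defined by iterated commutators and $p$-th powers and are therefore preserved by every group homomorphism.
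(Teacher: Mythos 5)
Your proposal is correct and follows essentially the same route as the paper's own proof: both transfer the image identities $\phi(U^{(k)})=V^{(k)}$, $\phi(\gamma_k(U))=\gamma_k(V)$, $\phi(P_k(U))=P_k(V)$ from the proof of Corollary \ref{cor:CharSbg} to the subgroup $U$, observe that the invariant of $V$ is at most $n$, and reduce the dichotomy to the equivalence $\phi(W)=1\Longleftrightarrow W\le\ker(\phi)$ applied to the last non-trivial term of the series. Your write-up merely makes the exclusive-and-exhaustive case analysis (hence the two-sided implications) slightly more explicit than the paper does.
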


%--------------------------------------------------------------------------------

\begin{proof}
\begin{enumerate}
\item
Let \(n:=\mathrm{dl}(U)\), then
\(U^{(n-1)}>U^{(n)}=1\) and
\(V^{(n-1)}=\phi(U)^{(n-1)}=\phi(U^{(n-1)})\ge\phi(U^{(n)})=\phi(U)^{(n)}=V^{(n)}=1\).
Consequently, we have \(\mathrm{dl}(V)=n\) if \(U^{(n-1)}\not\subseteq\ker(\phi)\)
and \(\mathrm{dl}(V)<n\) if \(U^{(n-1)}\le\ker(\phi)\).
\item
Let \(n:=\mathrm{cl}(U)\), then
\(\gamma_n(U)>\gamma_{n+1}(U)=1\) and
\(\gamma_n(V)=\gamma_n(\phi(U))=\phi(\gamma_n(U))\ge\phi(\gamma_{n+1}(U))=\gamma_{n+1}(\phi(U))=\gamma_{n+1}(V)=1\).
Thus, we have \(\mathrm{cl}(V)=n\) if \(\gamma_n(U)\not\subseteq\ker(\phi)\)
and \(\mathrm{cl}(V)<n\) if \(\gamma_n(U)\le\ker(\phi)\).
\item
Let \(n:=\mathrm{cl}_p(U)\), then
\(P_{n-1}(U)>P_n(U)=1\) and
\(P_{n-1}(V)=P_{n-1}(\phi(U))=\phi(P_{n-1}(U))\ge\phi(P_n(U))=P_n(\phi(U))=P_n(V)=1\).
Therefore, we have \(\mathrm{cl}_p(V)=n\) if \(P_{n-1}(U)\not\subseteq\ker(\phi)\)
and \(\mathrm{cl}_p(V)<n\) if \(P_{n-1}(U)\le\ker(\phi)\).
\end{enumerate}
\end{proof}

%\newpage

%--------------------------------------------------------------------------------

\subsection{Application to automorphisms}
\label{ss:IndAut}

\begin{corollary}
\label{cor:IndAutQtn}
(Induced automorphism)\\
Let \(\phi:\,G\to H\) be an \textit{epimorphism} of groups, \(\phi(G)=H\),
and assume that \(\sigma\in\mathrm{Aut}(G)\) is an automorphism of \(G\). 

\begin{enumerate}
\item
There exists an induced epimorphism \(\hat{\sigma}:\,H\to H\)
such that \(\hat{\sigma}\circ\phi=\phi\circ\sigma\),
if and only if \(\sigma(\ker(\phi))\le\ker(\phi)\),
resp. \(\ker(\phi)\le\sigma^{-1}(\ker(\phi))\).
\item
The induced epimorphism \(\hat{\sigma}\) is also an automorphism of \(H\),
\(\hat{\sigma}\in\mathrm{Aut}(H)\),
if and only if

\begin{equation}
\label{eqn:KIP}
\sigma(\ker(\phi))=\ker(\phi).
\end{equation}

\end{enumerate}

\end{corollary}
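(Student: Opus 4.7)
The plan is to derive Corollary~\ref{cor:IndAutQtn} directly from the Main Theorem~\ref{thm:IndHomQtn}, viewing the problem as a factorization of the composed homomorphism $\phi\circ\sigma:\,G\to H$ through the epimorphism $\phi:\,G\to H$. Writing $K:=\ker(\phi)$ for brevity, I would first apply Corollary~\ref{cor:FctThrQtn} (the special case $V=1$ of Theorem~\ref{thm:IndHomQtn}) to $\phi\circ\sigma$. The criterion there for the existence of an induced map $\hat{\sigma}:\,H\to H$ with $\hat{\sigma}\circ\phi=\phi\circ\sigma$ becomes the inclusion $K\le\ker(\phi\circ\sigma)$. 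Using $\ker(\phi\circ\sigma)=(\phi\circ\sigma)^{-1}(1)=\sigma^{-1}(\phi^{-1}(1))=\sigma^{-1}(K)$, this translates to $K\le\sigma^{-1}(K)$, which is equivalent to $\sigma(K)\le K$ by bijectivity of $\sigma$; this already yields the two equivalent formulations of the existence criterion in part~(1).

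For the claim that $\hat{\sigma}$ is not merely a homomorphism but an epimorphism, I would observe that $\mathrm{im}(\hat{\sigma})\supseteq\hat{\sigma}(\phi(G))=\phi(\sigma(G))=\phi(G)=H$, since $\sigma(G)=G$ and $\phi$ is surjective. Equivalently, one can invoke the last sentence of Theorem~\ref{thm:IndHomQtn}: if the original $\phi\circ\sigma$ is onto (as it is here), then its factorization through $G/K$ is an epimorphism, and composing with the isomorphism $G/K\xrightarrow{\sim}H$ induced by $\phi$ leaves this property intact.

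For part~(2), I would compute $\ker(\hat{\sigma})$ explicitly from the defining equation $\hat{\sigma}(\phi(x))=\phi(\sigma(x))$. An element $h\in H$ lies in $\ker(\hat{\sigma})$ iff it is of the form $h=\phi(x)$ with $\sigma(x)\in K$, i.e.\ iff $x\in\sigma^{-1}(K)$; hence $\ker(\hat{\sigma})=\phi(\sigma^{-1}(K))$. Injectivity of $\hat{\sigma}$ therefore amounts to $\phi(\sigma^{-1}(K))=1$, i.e.\ $\sigma^{-1}(K)\le K$, equivalently $K\le\sigma(K)$. Combined with the existence condition $\sigma(K)\le K$ from part~(1), this is precisely the kernel-invariance $\sigma(K)=K$ claimed in equation~(\ref{eqn:KIP}). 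Since surjectivity has already been secured, $\hat{\sigma}\in\mathrm{Aut}(H)$ is thus equivalent to $\sigma(K)=K$.

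The argument is essentially bookkeeping with direct and inverse images, so I do not anticipate a genuine obstacle. The one delicate point is the passage between the inclusions $\sigma(K)\le K$ and $K\le\sigma^{-1}(K)$: it uses that $\sigma$ is a bijection of $G$, so that applying $\sigma$ or $\sigma^{-1}$ preserves set inclusions exactly (rather than only one direction, as would be the case for a general endomorphism). This is what makes the corollary an essentially \emph{automorphism-theoretic} statement and explains why the symmetric condition $\sigma(K)=K$ appears only in part~(2).
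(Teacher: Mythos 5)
Your argument is correct and takes essentially the same route as the paper: both proofs reduce the statement to the induced-homomorphism criterion via the identification \(H\simeq G/\ker(\phi)\) furnished by the isomorphism theorem, obtain the existence condition \(\sigma(\ker(\phi))\le\ker(\phi)\) (equivalently \(\ker(\phi)\le\sigma^{-1}(\ker(\phi))\), using bijectivity of \(\sigma\)), note that surjectivity of \(\hat{\sigma}\) is automatic, and identify \(\ker(\hat{\sigma})\) with \(\sigma^{-1}(\ker(\phi))\) modulo \(\ker(\phi)\), so that injectivity forces \(\sigma(\ker(\phi))=\ker(\phi)\). The only cosmetic difference is that you factor the composite \(\phi\circ\sigma\) through the quotient via Corollary \ref{cor:FctThrQtn} and compute \(\ker(\hat{\sigma})=\phi(\sigma^{-1}(\ker(\phi)))\) directly, whereas the paper applies Theorem \ref{thm:IndHomQtn} to \(\sigma:(G,\ker(\phi))\to(G,\ker(\phi))\) and reads off \(\ker(\hat{\sigma})=\sigma^{-1}(\ker(\phi))/\ker(\phi)\) from Formula (\ref{eqn:KerImCoKer}); under the identification these are the same computation.
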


%--------------------------------------------------------------------------------

\noindent
In the second statement,
\(\phi\) is said to have the \textit{kernel invariance property} (KIP) with respect to \(\sigma\).

%\noindent
The situation of Corollary
\ref{cor:IndAutQtn}
is visualized in Figure
\ref{fig:InducedAutomorphism}.

%\newpage

%--------------------------------------------------------------------------------

{\normalsize

\begin{figure}[ht]
\caption{Induced Automorphism \(\hat{\sigma}\)}
\label{fig:InducedAutomorphism}

% Induced automorphism

\setlength{\unitlength}{1cm}
\begin{picture}(6,5)(-3,-6.5)

\put(-2,-2){\makebox(0,0)[cc]{\(G\)}}
%\put(-2,-4){\makebox(0,0)[cc]{\(\downarrow\)}}
\put(-2,-2.5){\vector(0,-1){3}}
\put(-2.1,-4){\makebox(0,0)[rc]{\(\sigma\)}}
\put(-2,-6){\makebox(0,0)[cc]{\(G\)}}

\put(0,-1.9){\makebox(0,0)[cb]{\(\phi\)}}
%\put(0,-2){\makebox(0,0)[cc]{\(\longrightarrow\)}}
\put(-1.5,-2){\vector(1,0){3}}
\put(-1.5,-6){\vector(1,0){3}}
%\put(0,-6){\makebox(0,0)[cc]{\(\longrightarrow\)}}
\put(0,-6.1){\makebox(0,0)[ct]{\(\phi\)}}

\put(1.8,-2){\makebox(0,0)[lc]{\(H=\phi(G)\)}}
%\put(2,-4){\makebox(0,0)[cc]{\(\downarrow\)}}
\put(2,-2.5){\vector(0,-1){3}}
\put(2.1,-4){\makebox(0,0)[lc]{\(\hat{\sigma}\)}}
\put(1.8,-6){\makebox(0,0)[lc]{\(H=\phi(G)\)}}

\end{picture}

\end{figure}

}

%\newpage

%--------------------------------------------------------------------------------

\begin{proof}
Since \(\phi\) is supposed to be an epimorphism,
the well-known isomorphism theorem in Remark
\ref{rmk:FctThrQtn}
yields a representation of the image
\(H=\phi(G)\simeq G/\ker(\phi)\) as a quotient.
\begin{enumerate}
\item
According to Theorem
\ref{thm:IndHomQtn},
the automorphism \(\sigma\in\mathrm{Aut}(G)\),
simply viewed as a homomorphism \(\sigma:\,G\to G\),
induces a homomorphism \(\hat{\sigma}:\,G/\ker(\phi)\to G/\ker(\phi)\)
if and only if \(\sigma(\ker(\phi))\le\ker(\phi)\).
Since \(\sigma\) is an epimorphism, \(\hat{\sigma}\) is also an epimorphism
with kernel \(\ker(\hat{\sigma})=\sigma^{-1}(\ker(\phi))/\ker(\phi)\).
\item
Finally,
\(\ker(\hat{\sigma})=1\)
\(\Longleftrightarrow\) \(\sigma^{-1}(\ker(\phi))=\ker(\phi)\)
\(\Longleftrightarrow\) \(\ker(\phi)=\sigma(\ker(\phi))\).
\end{enumerate}
\end{proof}

%--------------------------------------------------------------------------------

\begin{remark}
\label{rmk:IndAutQtn}
If \(\ker(\phi)\) is a characteristic subgroup of \(G\),
then Corollary
\ref{cor:IndAutQtn}
makes sure that any automorphism \(\sigma\in\mathrm{Aut}(G)\)
induces an automorphism \(\hat{\sigma}\in\mathrm{Aut}(H)\),
where \(H=\phi(G)\simeq G/\ker(\phi)\).
The reason is that, by definition,
a characteristic subgroup of \(G\) is invariant under any automorphism of \(G\).
\end{remark}

%\newpage

%--------------------------------------------------------------------------------

We conclude this section with a statement about GI-automorphisms
(\textit{generator-inverting} automorphisms)
which have been introduced by Boston, Bush and Hajir
\cite[Dfn.2.1]{BBH}.
The proof requires results of Theorem
\ref{thm:IndHomQtn},
Corollary
\ref{cor:IndAutQtn},
and Corollary
\ref{cor:CharSbg}.

%--------------------------------------------------------------------------------

\begin{theorem}
\label{thm:GIAut}
(Induced generator-inverting automorphism)\\
Let \(\phi:\,G\to H\) be an epimorphism of groups with \(\phi(G)=H\),
and assume that \(\sigma\in\mathrm{Aut}(G)\) is an automorphism
satisfying the \(\mathrm{KIP}\) \(\sigma(\ker(\phi))=\ker(\phi)\),
and thus induces an automorphism \(\hat{\sigma}\in\mathrm{Aut}(H)\).

If \(\sigma\) is generator-inverting,
that is,

\begin{equation}
\label{eqn:GI}
\sigma(x)G^\prime=x^{-1}G^\prime \text{ for all } x\in G,
\end{equation}

then \(\hat{\sigma}\) is also generator-inverting,
that is, \(\hat{\sigma}(y)H^\prime=y^{-1}H^\prime\) for all \(y\in H\).
\end{theorem}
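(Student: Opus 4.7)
The plan is to exploit the commutative square $\hat{\sigma}\circ\phi=\phi\circ\sigma$ furnished by Corollary \ref{cor:IndAutQtn}, together with the fact that $\phi$ maps commutator subgroups to commutator subgroups (Formula \eqref{eqn:ImgDrvSbg} of Theorem \ref{thm:TargetSing}), so that $\phi(G^\prime)=\phi(G)^\prime=H^\prime$.

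First I would fix an arbitrary element $y\in H$. Since $\phi$ is an epimorphism, there exists $x\in G$ with $\phi(x)=y$. The generator-inverting hypothesis on $\sigma$ means that $\sigma(x)\cdot x\in G^\prime$, or equivalently $\sigma(x)=x^{-1}\cdot c$ for some commutator $c\in G^\prime$. Applying $\phi$ and using its homomorphism property yields
\begin{equation*}
\phi(\sigma(x))=\phi(x^{-1})\cdot\phi(c)=\phi(x)^{-1}\cdot\phi(c)=y^{-1}\cdot\phi(c).
\end{equation*}
Next I would invoke the commutative diagram of Corollary \ref{cor:IndAutQtn} to rewrite the left-hand side as $\phi(\sigma(x))=\hat{\sigma}(\phi(x))=\hat{\sigma}(y)$, so that $\hat{\sigma}(y)=y^{-1}\cdot\phi(c)$.

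It remains to observe that $\phi(c)\in H^\prime$, which is precisely Formula \eqref{eqn:ImgDrvSbg} applied to $U=G$: since $c\in G^\prime$ we get $\phi(c)\in\phi(G^\prime)=\phi(G)^\prime=H^\prime$. Multiplying the identity $\hat{\sigma}(y)=y^{-1}\phi(c)$ on the right by $H^\prime$ yields $\hat{\sigma}(y)H^\prime=y^{-1}H^\prime$, as claimed. Because $y\in H$ was arbitrary, this completes the argument.

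I do not foresee any real obstacle: the KIP is used implicitly through the mere existence of $\hat{\sigma}$, and the proof reduces to chasing the commutative square and absorbing commutators into $H^\prime$ via the functoriality of the derived subgroup. The only subtlety worth spelling out is the transfer of the GI property from a statement modulo $G^\prime$ to an honest equation $\sigma(x)=x^{-1}c$, which is justified by unpacking the coset equality in \eqref{eqn:GI}.
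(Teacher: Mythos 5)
Your proof is correct and follows essentially the same route as the paper's: both rest on the commutative square \(\hat{\sigma}\circ\phi=\phi\circ\sigma\) from Corollary \ref{cor:IndAutQtn} and the identity \(\phi(G^\prime)=\phi(G)^\prime=H^\prime\), the only difference being that you carry out the coset chase element-wise (\(\sigma(x)=x^{-1}c\), \(c\in G^\prime\)) instead of first packaging it into the induced maps \(\tilde{\sigma}\), \(\bar{\sigma}\), \(\tilde{\phi}\) on the derived quotients as the paper does. One cosmetic remark: your \(c\) is an element of \(G^\prime\), i.e.\ a product of commutators rather than necessarily a single commutator, but this does not affect the argument.
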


%--------------------------------------------------------------------------------

\begin{proof}
According to Corollary
\ref{cor:IndAutQtn},\\
\(\sigma:\,G\to G\) induces an automorphism \(\hat{\sigma}:\,H\to H\),
since \(\sigma(\ker(\phi))=\ker(\phi)\).\\
Two applications of the Remark
\ref{rmk:IndAutQtn}
after Corollary
\ref{cor:IndAutQtn},
yield:\\
\(\sigma:\,G\to G\) induces an automorphism \(\tilde{\sigma}:\,G/G^\prime\to G/G^\prime\),
since \(G^\prime\) is characteristic in \(G\), and\\
\(\hat{\sigma}:\,H\to H\) induces an automorphism \(\bar{\sigma}:\,H/H^\prime\to H/H^\prime\),
since \(H^\prime\) is characteristic in \(H\).\\
Using Theorem
\ref{thm:IndHomQtn}
and the first part of the proof of Corollary
\ref{cor:CharSbg},
we obtain:\\
\(\phi:\,G\to H\) induces an epimorphism \(\tilde{\phi}:\,G/G^\prime\to H/H^\prime\),
since \(\phi(G^\prime)=\phi(G)^\prime=H^\prime\).\\
The actions of the various induced homomorphisms are given by\\
\(\hat{\sigma}(\phi(x))=\phi(\sigma(x))\) for \(x\in G\),\\
\(\tilde{\sigma}(xG^\prime)=\sigma(x)G^\prime\) for \(x\in G\),\\
\(\bar{\sigma}(yH^\prime)=\hat{\sigma}(y)H^\prime\) for \(y\in H\), and\\
\(\tilde{\phi}(xG^\prime)=\phi(x)H^\prime\) for \(x\in G\).\\
Finally, combining all these formulas and expressing \(H\ni y=\phi(x)\) for a suitable \(x\in G\),
we see that \(\tilde{\sigma}(xG^\prime)=\sigma(x)G^\prime=x^{-1}G^\prime\)
implies the required relation for \(\bar{\sigma}(yH^\prime)=\hat{\sigma}(y)H^\prime\):\\
\(\hat{\sigma}(y)H^\prime
=\hat{\sigma}(\phi(x))H^\prime
=\phi(\sigma(x))H^\prime
=\tilde{\phi}(\sigma(x)G^\prime)
=\tilde{\phi}(x^{-1}G^\prime)
=\phi(x^{-1})H^\prime
=\phi(x)^{-1}H^\prime
=y^{-1}H^\prime\)
\end{proof}

%\newpage

%--------------------------------------------------------------------------------

{\normalsize

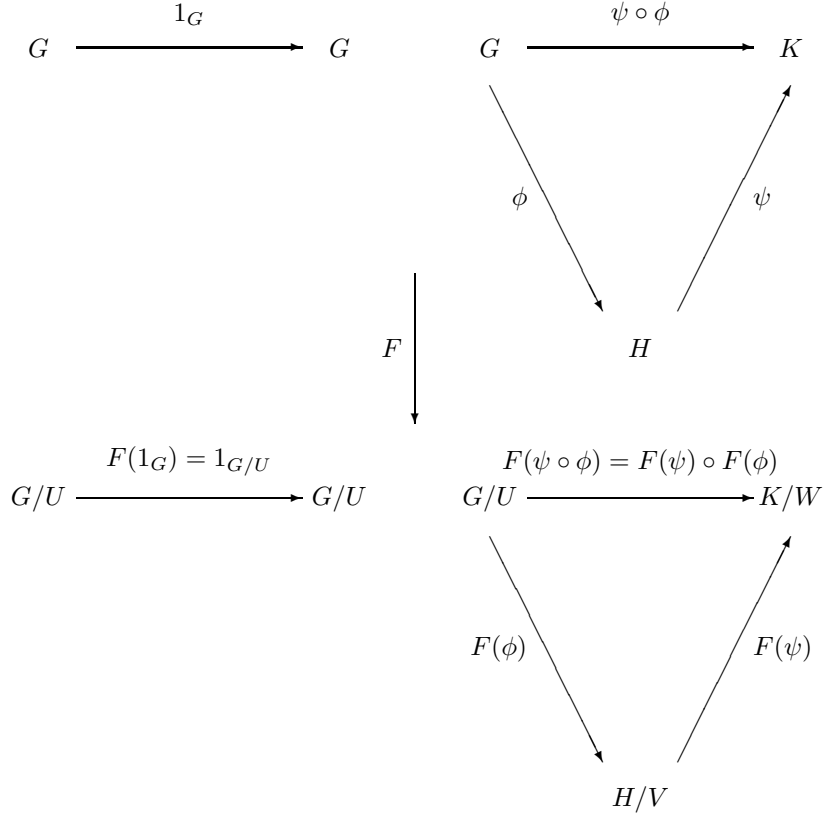
\begin{figure}[ht]
\caption{Functorial properties of induced homomorphisms}
\label{fig:FunctorialProps}

% Functorial properties

\setlength{\unitlength}{1cm}
\begin{picture}(13,11)(-9.5,-12)

% left diagram

\put(-8,-2){\makebox(0,0)[cc]{\(G\)}}
\put(-6,-1.7){\makebox(0,0)[cb]{\(1_G\)}}
\put(-7.5,-2){\vector(1,0){3}}
\put(-4,-2){\makebox(0,0)[cc]{\(G\)}}

\put(-8,-8){\makebox(0,0)[cc]{\(G/U\)}}
\put(-6,-7.7){\makebox(0,0)[cb]{\(F(1_G)=1_{G/U}\)}}
\put(-7.5,-8){\vector(1,0){3}}
\put(-4,-8){\makebox(0,0)[cc]{\(G/U\)}}

% middle

\put(-3.3,-6){\makebox(0,0)[cc]{\(F\)}}
\put(-3,-5){\vector(0,-1){2}}

% right diagram

\put(-2,-2){\makebox(0,0)[cc]{\(G\)}}
\put(-2,-2.5){\vector(1,-2){1.5}}
\put(-1.5,-4){\makebox(0,0)[rc]{\(\phi\)}}

\put(0,-1.7){\makebox(0,0)[cb]{\(\psi\circ\phi\)}}
\put(-1.5,-2){\vector(1,0){3}}

\put(2,-2){\makebox(0,0)[cc]{\(K\)}}
\put(0.5,-5.5){\vector(1,2){1.5}}
\put(1.5,-4){\makebox(0,0)[lc]{\(\psi\)}}

\put(0,-6){\makebox(0,0)[cc]{\(H\)}}

\put(-2,-8){\makebox(0,0)[cc]{\(G/U\)}}
\put(-2,-8.5){\vector(1,-2){1.5}}
\put(-1.5,-10){\makebox(0,0)[rc]{\(F(\phi)\)}}

\put(0,-7.7){\makebox(0,0)[cb]{\(F(\psi\circ\phi)=F(\psi)\circ F(\phi)\)}}
\put(-1.5,-8){\vector(1,0){3}}

\put(2,-8){\makebox(0,0)[cc]{\(K/W\)}}
\put(0.5,-11.5){\vector(1,2){1.5}}
\put(1.5,-10){\makebox(0,0)[lc]{\(F(\psi)\)}}

\put(0,-12){\makebox(0,0)[cc]{\(H/V\)}}

\end{picture}

\end{figure}

}

%\newpage

%--------------------------------------------------------------------------------

\subsection{Functorial properties}
\label{ss:FunctorialProps}

The mapping \(F:\,\phi\mapsto F(\phi)=\tilde{\phi}\)
which maps a homomorphism of one category to an induced homomorphism of another category
can be viewed as a \textit{functor} \(F\).

In the special case of induced homomorphisms \(\tilde{\phi}\) between quotient groups,
we define the domain of the functor \(F\) as the following \textit{category} \(\mathcal{G}_n\).\\
The objects of the category are pairs \((G,U)\)
consisting of a group \(G\) and a \textit{normal subgroup} \(U\unlhd G\),

\begin{equation}
\label{eqn:ObjGNormal}
\mathrm{Obj}(\mathcal{G}_n)=\lbrace (G,U)\mid G\in\mathrm{Obj}(\mathcal{G}),\ U\unlhd G\rbrace.
\end{equation}

\noindent
For two objects \((G,U),(H,V)\in\mathrm{Obj}(\mathcal{G}_n)\),
the set of morphisms \(\mathrm{Mor}_{\mathcal{G}_n}((G,U),(H,V))\)
consists of homomorphisms \(\phi:\,G\to H\) such that \(\phi(U)\le V\),
briefly written as arrows \(\phi:\,(G,U)\to (H,V)\),

\begin{equation}
\label{eqn:MorGNormal}
\mathrm{Mor}_{\mathcal{G}_n}((G,U),(H,V))=\lbrace\phi\in\mathrm{Mor}_{\mathcal{G}}(G,H)\mid\phi(U)\le V\rbrace.
\end{equation}

\noindent
The functor \(F:\,\mathcal{G}_n\to\mathcal{G}\)
from this new category \(\mathcal{G}_n\) to the usual category \(\mathcal{G}\) of groups\\
maps a pair \((G,U)\in\mathrm{Obj}(\mathcal{G}_n)\) to the corresponding quotient group \(F((G,U)):=G/U\in\mathrm{Obj}(\mathcal{G})\),\\
and it maps a morphism \(\phi\in\mathrm{Mor}_{\mathcal{G}_n}((G,U),(H,V))\)
to the induced homomorphism \(F(\phi):=\tilde{\phi}\in\mathrm{Mor}_{\mathcal{G}}(G/U,H/V)\),

\begin{equation}
\label{eqn:FunctorGNormal}
F:\,\mathcal{G}_n\to\mathcal{G},\ F((G,U)):=G/U,\ F(\phi):=\tilde{\phi}.
\end{equation}

\noindent
Existence and uniqueness of \(F(\phi):=\tilde{\phi}\) have been proved in Theorem
\ref{thm:IndHomQtn}
under the assumption that \(\phi(U)\le V\),
which is satisfied according to the definition of the arrow \(\phi:\,(G,U)\to (H,V)\).

%--------------------------------------------------------------------------------

The \textit{functorial properties}, which are visualized in Figure
\ref{fig:FunctorialProps},
can be expressed in the following form.\\
Firstly, \(F\) maps the identity morphism \(1_G\in\mathrm{Mor}_{\mathcal{G}_n}((G,U),(G,U))\)
having the trivial property \(1_G(U)=U\)
to the identity homomorphism

\begin{equation}
\label{eqn:FunctorialIdentity}
F(1_G)=\tilde{1}_G=1_{G/U}\in\mathrm{Mor}_{\mathcal{G}}(G/U,G/U),
\end{equation}

\noindent
and secondly, \(F\) maps the compositum \(\psi\circ\phi\in\mathrm{Mor}_{\mathcal{G}_n}((G,U),(K,W))\)
of two morphisms \(\phi\in\mathrm{Mor}_{\mathcal{G}_n}((G,U),(H,V))\)
and \(\psi\in\mathrm{Mor}_{\mathcal{G}_n}((H,V),(K,W))\),
which obviously enjoys the required property
\[(\psi\circ\phi)(U)=\psi(\phi(U))\le\psi(V)\le W,\]
to the compositum

\begin{equation}
\label{eqn:FunctorialCompositum}
F(\psi\circ\phi)=(\psi\circ\phi)\,\tilde{}=\tilde{\psi}\circ\tilde{\phi}=F(\psi)\circ F(\phi)
\in\mathrm{Mor}_{\mathcal{G}}(G/U,K/W)
\end{equation}

\noindent
of the induced homomorphisms \textit{in the same order}.\\
The last fact shows that \(F\) is a \textit{covariant} functor.

%\newpage

%--------------------------------------------------------------------------------

\subsection{Acknowledgements}
\label{ss:Acknowledgements}

The author would like to express his heartfelt gratitude
to Professor Mike F. Newman
from the Australian National University in Canberra, Australian Capital Territory,
for his continuing encouragement and interest in our endeavour
to strengthen the bridge between group theory and class field theory
which was initiated by the ideas of Emil Artin,
and for his untiring willingness to share his extensive knowledge and expertise
and to be a source of advice in difficult situations.

We also gratefully acknowledge that our research is supported by
the Austrian Science Fund (FWF): P 26008-N25.

%\newpage

%--------------------------------------------------------------------------------

%--------------------------------------------------------------------------------

\end{document}